\numberwithin{figure}{section}
\numberwithin{equation}{section}
\title{Jones--Wenzl projections of type $D$ and Dyck tilings}
\author{Keiichi Shigechi}
\email{k1.shigechi AT gmail.com}
\date{\today}
\newcommand\tikzpic[2]{
\raisebox{#1\totalheight}{
\begin{tikzpicture}
#2
\end{tikzpicture}
}}
\newtheorem{theorem}[figure]{Theorem}
\newtheorem{example}[figure]{Example}
\newtheorem{lemma}[figure]{Lemma}
\newtheorem{defn}[figure]{Definition}
\newtheorem{prop}[figure]{Proposition}
\newtheorem{remark}[figure]{Remark}
\begin{document}
	
\begin{abstract}
We study the relation between a coefficient of an element of the Jones--Wenzl 
projection in the Temperley--Lieb algebra of type $D$ and an enumeration of Dyck tilings.
The coefficient can be non-recursively expressed as an enumerative generating function of Dyck tilings
by considering the generalized Hermite histories, which we call bi-colored vertical Hermite histories, on the tilings. 
\end{abstract}

\maketitle

\section{Introduction}
The Jones--Wenzl projection \cite{Jon83,Jon89,Wen87} is a special element of the 
Temperley--Lieb algebra \cite{TL71}.
This projection appears in many branches of mathematics and mathematical physics:
it gives a way to calculate the Jones polynomial of a knot \cite{Jon87,Jon89}, 
graphical calculus of the canonical bases in tensor products of the quantum group 
$U_{q}(\mathfrak{sl}_2)$ \cite{FK97}, 
and Soergel calculus \cite{EliWil16}.
In this paper, we study the Jones--Wenzl projections of type $D$ in terms of 
combinatorial objects called Dyck tilings.
We prove that a coefficient in the Jones--Wenzl projection of type $D$ can be non-recursively
expressed as the generating function of Dyck tilings.

The Jones--Wenzl projections of type $A$ were introduced by V.~F.~R.~Jones 
in \cite{Jon83}, and the recursive formula was given by H.~Wenzl in \cite{Wen87}.
This recursive formula was further simplified by I.~B.~Frenkel and M.~G.~Khovanov 
in \cite{FK97}, and a graphical interpretation of the same recursive formula 
was given by S.~Morrison in \cite{Mor17}.
In \cite{Shi24}, the author gave a combinatorial interpretation of the  
Jones--Wenzl projections of types $A$ and $B$. 
More concretely, the coefficient of an element in the projection is given 
by the generating function of combinatorial objects called Dyck tilings.
In \cite{Bai24}, J.~Baine gives another interpretation of the coefficients of the projection 
in terms of Kazhdan--Lusztig polynomials, equivalently, the graded ranks of Soergel modules,
for a wide class of types which includes types $A$ and $B$, but not type $D$.

A Dyck tiling is a tiling of a region between two Dyck paths by use of Dyck tiles.
Dyck tilings first appeared in the study of the Kazhdan--Lusztig polynomials for the 
Grassmannian permutations in \cite{SZJ12}, and independently in the study of double-dimer 
models \cite{KW11}.
We have two types of Dyck tilings (type $I$ and type $II$ in \cite{SZJ12}), and 
we make use of only cover-inclusive Dyck tilings (corresponding to type $I$ in \cite{SZJ12}) 
in this paper.
In \cite{KMPW12}, basic tools such as Dyck tiling strips, Dyck tiling ribbons, and Hermite histories, 
were established to study Dyck tilings. 
Among these tools, we generalize and make use of Hermite histories, which we call vertical Hermite 
histories, to give a combinatorial interpretation of the coefficients of elements in the Jones--Wenzl 
projection.

We start with the recursive relation for the Jones--Wenzl projection 
obtained by P.~Sentinelli in \cite{Sen19}, which is 
the type $D$  analogue of the Wenzl's recursive formula. 
Then, we simplify the recursive relation further along the spirit of 
Frenkel--Khovanov \cite{FK97} and Morrison \cite{Mor17}.
By applying the method used by Morrison, we obtain two types of recursive relations in terms of 
the diagrams according to the parity. 
One of the main results of this paper is that these simplified recursive formulas can be interpreted 
in terms of Dyck tilings by imposing a simple condition on them, or by introducing a new 
combinatorial description of Hermite histories which we call bi-colored vertical Hermite histories.
The advantage of our method is that a coefficient in the Jones--Wenzl projections is given by
an enumerative generating function of Dyck tilings.

It is well-known that the elements in the Temperley--Lieb algebra of type $A$ correspond 
to $n+1$-strand Temperley--Lieb diagrams which are diagrams with $n+1$ marked points on the 
top and the bottom, with $n+1$ non-intersecting strands connecting these marked points.
Similarly, the elements in the Tempereley--Lieb algebra of type $D$ correspond to 
the $n+1$-strand Temperley--Lieb diagrams with dots.
The diagrams are classified into two classes according to the parity of the number of dots in the diagram. 
The coefficients of the elements in the projection behave differently according to 
the parity. Therefore, we have two types of recursive formulas for the coefficients depending 
on the parity. We discuss these two cases in what follows.

The Temperley--Lieb algebra $\mathrm{TL}_{n+1}^{D}$ of type $D$ has the set of generators 
$\{E_0,E_1,\ldots,E_{n}\}$ with some relations.
If the number of dots in a diagram is even, this means that the algebraic representation of the diagram 
does not contain the product $E_{0}E_{1}$ of the two generators.
This condition is equivalent to considering the case where the product satisfies $E_0E_1=0$. 
Then, this case can be essentially reduced to the the type $B$ case (see e.g. \cite{StrWoj24}). 
Proposition \ref{prop:Deven} also shows this fact since the form of the recursive relation for the projections 
of type $D$ (under the condition $E_0E_1=0$) is the same as types $A$ and $B$ which are studied 
in \cite{Shi24}.
From these facts, the coefficients are expressed in terms of the generating functions of Dyck tilings 
which satisfy a simple condition (Theorem \ref{thrm:Deven}).

If a diagram contains an odd number of dots, the algebraic representation of the diagram 
contains the product $E_{0}E_{1}\neq 0$.
The existence of the product $E_0E_1$ is the main difficulty to obtain a combinatorial 
interpretation.
Below, we explain two difficulties coming from the product.

Firstly, in \cite{Gre98}, R.~M.~Green introduced a graphical representation of the Temperley--Lieb 
algebra of type $D$. In this representation, we have to introduce an equivalence of the 
diagrams corresponding to $E_1E_0$ and $E_{0}E_{1}$ which naively give different 
diagrams although we have $E_1E_0=E_0E_1$.
Instead of the equivalence relations, we introduce a similar, but more straightforward 
graphical representation.
When a diagram contains the product $E_0E_1$, we give a unique dot in the diagram.
This is well-defined since if the diagram contains the product $E_0E_1$, then 
it does not contain any other $E_0$ and $E_{1}$ in its algebraic representation.
We regard the unique dot as a landmark which indicates that the diagram 
contains the product $E_{0}E_{1}$. 
A diagram satisfies simple conditions (P1) and (P2) as in Section \ref{sec:grep}.

Secondly, if we derive the recurrence relation for a diagram with a unique dot, 
then this recurrence relation contains minus signs (Proposition \ref{prop:Dodd}).
The minus signs prevent us to make a combinatorial interpretation since we want 
to connect a coefficient in the projections with a generating function which 
enumerates combinatorial objects without minus signs.    
To overcome this point, we study the relation between the elements of type $A$ and type $D$.
If we take an appropriate linear combination of elements of type $D$, then this linear combination 
satisfies the recursive relation for type $A$ (Section \ref{sec:relAD}).
This allows us to rewrite the recursive relation into another recursive relation 
which does not have minus signs.
This new recursive relation naturally gives a combinatorial description of 
an element in the projection in terms of Dyck tilings satisfying simple 
conditions (Q1) to (Q4) in Section \ref{sec:DTodd}.

By combining the above two mentioned prescriptions, 
the recurrence relation becomes minus sign free, however, it involves the 
generating function of not only type $D$, but also type $A$ (Proposition \ref{prop:DinDAD}).
We introduce new combinatorial objects on a Dyck tiling, which we call bi-colored vertical Hermite histories.
The bi-colored vertical Hermite histories fit well to this situation, and give 
a combinatorial interpretation of the coefficients in the projections (Theorem \ref{thrm:DoddZ}).
A coefficient is expressed as a generating function of Dyck tilings 
using the bi-colored vertical Hermite histories.  
Especially, this combinatorial description is enumerative and non-recursive.

The paper is organized as follows. In Section \ref{sec:TLD}, we introduce the Temperley--Lieb 
algebra of type $D$, the graphical representation, and Jones--Wenzl projections.
We simplify the recursive relation of the projections, and give diagrammatic 
recursive relations in Section \ref{sec:rr}.
In Section \ref{sec:DT}, after introducing Dyck tilings and vertical Hermite histories,
we give a combinatorial interpretation of Jones--Wenzl projections in terms of 
bi-colored vertical Hermite histories.
Some explicit expressions of the coefficients in the Jones--Wenzl projections are 
given in Section \ref{sec:ex}. 
In Appendix \ref{app:A}, we give the explicit expression for the Jones--Wenzl 
projection of type $D$ of rank four.

\subsection*{Notation} 
We will use the standard notation for $q$-integers: $[n]:=(q^{n}-q^{-n})/(q-q^{-1})$.

\subsection*{Acknowledgement}
The author gratefully acknowledges Byung-Hak Hwang for collaboration at the 
early stage of this work and valuable comments on the manuscript.

\section{Temperley--Lieb algebra of type \texorpdfstring{$D$}{D}}
\label{sec:TLD}
\subsection{Definition}
The Dynkin diagram of type $D$ is depicted as follows:
\begin{center}
  \begin{tikzpicture}[scale=.5	]
    \draw (-2.5,0) node[anchor=east]  {$D_{n+1}:$};
    \foreach \x in {0,...,3}
    \draw[xshift=\x cm,thick,fill] (\x cm,0) circle (.15cm);
    \foreach \x/\y in {0/2,1/3,2/4}
    \draw[xshift=\x cm,thick] (\x cm,0.3)node[anchor=south]{$\y$};
    \draw[xshift=3 cm,thick] (3 cm,0.3)node[anchor=south]{$n$};
    \draw[xshift=0 cm,thick,fill] (150: 16 mm) circle (.15cm);
    \draw[xshift=0 cm,thick,fill] (-150: 16 mm) circle (.15cm);
    \draw[dotted,thick] (4.3 cm,0) -- +(1.4 cm,0);
    \foreach \x in {0.09,1.09}
    \draw[xshift=\x cm,thick] (\x cm,0) -- +(1.7 cm,0);
    \draw[xshift=0 cm,thick] (150: 1.5mm) -- (150: 14.5 mm);
    \draw[xshift=0 cm,thick] (150:17mm)+(0,0.3)node[anchor=south]{$0$};
    \draw[xshift=0 cm,thick] (-150: 1.5mm) -- (-150: 14.5mm);
    \draw[xshift=0 cm,thick] (-150:17mm)+(0,-0.3)node[anchor=north]{$1$};
  \end{tikzpicture}
\end{center}
We take the convention that the Dynkin diagram of type $D_{n+1}$ is of 
rank $n+1$, and the labels of the nodes are $0,1,2\ldots,n$.
We have $D_1=A_1$, $D_2=A_1\times A_1$, and $D_3=A_3$.

The {\it Temperley--Lieb algebra} $\mathrm{TL}^{D}_{n+1}$ of type $D$ is 
a $\mathbb{C}(q)$-algebra with generators 
$\{E_0,E_1,\ldots,E_{n}\}$
which satisfy the following relations:
\begin{align*}
&E_{i}^{2}=-[2] E_{i}, \qquad\forall i\in\{0,1,\ldots,n\}\\
&E_{i}E_{j}E_{i}=E_{i},\qquad \text{if $i$ and $j$ are adjacent in the Dynkin diagram $D_{n+1}$}, \\
&E_{i}E_{j}=E_{j}E_{i},\qquad \text{if $i$ and $j$ are not adjacent in the Dynkin diagram $D_{n+1}$}.
\end{align*}
The Temperley--Lieb algebra $\mathrm{TL}^{A}_{n+1}$ of type $A$ is generated by the subset $\{E_1,E_2,\ldots, E_{n}\}$ or
$\{E_{0},E_2,\ldots, E_{n}\}$ of generators. 
It is well-known that the dimension of $\mathrm{TL}^{A}_{n}$ is given by the Catalan number 
$C_{n}:=\genfrac{}{}{}{}{1}{n+1}\genfrac{(}{)}{0pt}{}{2n}{n}=1,2,5,14,\ldots$.
Similarly, the dimension of $\mathrm{TL}^{D}_{n}$ is given by 
\begin{align}
\label{eq:TLdFCE}
\dim\mathrm{TL}^{D}_{n}=\genfrac{}{}{}{}{n+3}{2}C_{n}-1.
\end{align}
The dimension of $\mathrm{TL}^{D}_{n}$ is equal to the 
number of fully commutative elements of type $D$ studied in \cite{Fan96,Gre98,Ste96}.
In Section \ref{sec:DpTL}, we derive the formula (\ref{eq:TLdFCE}) by use of the 
correspondence between a Temperley--Lieb diagram and another diagram called 
a chord diagram.

Let $w:=(s_1,\ldots,s_{l})$ be a sequence of integers such that $s_{i}\in[0,n]$.
We call $E_{w}:=E_{s_1}\ldots E_{s_l}$ in $\mathrm{TL}^{D}_{n+1}$ a {\it word}.
A word $E_w$ is said to be a {\it reduced expression} if $E_{w}$ can not 
be written with less than $l$ generators.
The integer $l$ is the {\it length} of $E_{w}$.

\subsection{Graphical representation}
\label{sec:grep}
We first recall the graphical representation of $\mathrm{TL}^{A}_{n+1}$.
An $n+1$-strand Temperley--Lieb diagram is a diagram with $n+1$ stands such that 
it has $n+1$ marked points on the top and on the bottom, and $n+1$ arcs joining 
these marked points are non-intersecting. An arc joining two marked points 
on the top (resp. on the bottom) is called a {\it cup} (resp. a {\it cap}).
A cup $c$ is called inner-most cup if there is no smaller 
cup inside $c$.
A cup is called outer-most if there is no larger cup outside $c$ and no vertical
strands left to $c$.
An inner-most or outer-most cap is similarly defined.
We also say that a vertical strand which connects the right-most points 
on the top and bottom is an inner-most cap.
A vertical strand which is left-most in a diagram is said to be an outer-most cap.

An example of Temperley--Lieb diagram is given in Figure \ref{fig:TLdiagram}.
\begin{figure}[ht]
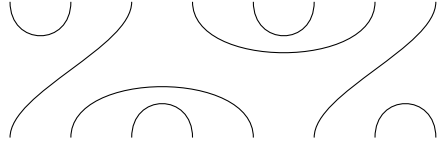

\tikzpic{-0.5}{[xscale=.8,yscale=.6]
\draw(0,3)..controls(0,2)and(1,2)..(1,3);
\draw(0,0)..controls(0,1)and(2,2)..(2,3);
\draw(1,0)..controls(1,1.5)and(4,1.5)..(4,0);
\draw(2,0)..controls(2,1)and(3,1)..(3,0);
\draw(3,3)..controls(3,1.5)and(6,1.5)..(6,3);
\draw(4,3)..controls(4,2)and(5,2)..(5,3);
\draw(5,0)..controls(5,1)and(7,2)..(7,3);
\draw(6,0)..controls(6,1)and(7,1)..(7,0);
}
\caption{An example of $8$-strand Temperley--Lieb diagram}
\label{fig:TLdiagram}
\end{figure}
This diagram has three caps and three cups. 
The cup connecting the first and second points is both 
inner-most and outer-most. The cup connecting the fourth and seventh 
points is neither inner-most nor outer-most. 
The cup connecting the fifth and sixth points is 
inner-most. We have two vertical strands which connect points on the 
bottom and on the top.
The left-most vertical strand is an outer-most cap.

Let $D_1$ and $D_2$ be $n+1$-strand Temperley--Lieb diagrams of type $A$.
Then, the product $D=D_1D_2$ of $D_1$ and $D_2$ is given by putting $D_1$ on 
the top of $D_2$.
If there is a loop in $D$, we give a factor $-[2]$ and delete the loop from the diagram.
For example, two diagrams $D_1$ and $D_2$ 
\begin{align*}
D_1:=\tikzpic{-0.5}{[yscale=0.6]
\draw(0,0)node[anchor=north]{$1$}..controls(0,1)and(1,1)..(1,0)node[anchor=north]{$2$}
(0,2)..controls(0,1)and(2,1)..(2,0)node[anchor=north]{$3$}
(1,2)..controls(1,1)and(2,1)..(2,2)(3,0)node[anchor=north]{$4$}--(3,2);
}\qquad
D_2:=\tikzpic{-0.5}{[yscale=0.6]
\draw(0,0)node[anchor=north]{$1$}..controls(0,1)and(2,1)..(2,2)
(1,0)node[anchor=north]{$2$}..controls(1,1)and(3,1)..(3,2)
(2,0)node[anchor=north]{$3$}..controls(2,1)and(3,1)..(3,0)node[anchor=north]{$4$}
(0,2)..controls(0,1)and(1,1)..(1,2);
}
\end{align*}
give the diagram $D=D_1D_2$:
\begin{align*}
D=
\tikzpic{-0.5}{[yscale=0.6]
\draw(0,2)..controls(0,3)and(1,3)..(1,2)(0,4)..controls(0,3)and(2,3)..(2,2)
(1,4)..controls(1,3)and(2,3)..(2,4)(3,2)--(3,4);
\draw(0,0)node[anchor=north]{$1$}..controls(0,1)and(2,1)..(2,2)
(1,0)node[anchor=north]{$2$}..controls(1,1)and(3,1)..(3,2)
(2,0)node[anchor=north]{$3$}..controls(2,1)and(3,1)..(3,0)node[anchor=north]{$4$}
(0,2)..controls(0,1)and(1,1)..(1,2);
\draw[gray](-0.5,2)--(3.5,2);
}=
-[2]\cdot 
\tikzpic{-0.5}{[yscale=0.6]
\draw(0,0)node[anchor=north]{$1$}--(0,2)(1,0)node[anchor=north]{$2$}..controls(1,1)and(3,1)..(3,2)
(1,2)..controls(1,1)and(2,1)..(2,2)
(2,0)node[anchor=north]{$3$}..controls(2,1)and(3,1)..(3,0)node[anchor=north]{$4$};
}
\end{align*}

To consider $\mathrm{TL}^{D}_{n+1}$, we generalize the notion of the $n+1$-strand
Temperley--Lieb diagrams.
For type $D$, we allow to put a ``dot" on a cup, a cap and a vertical strand. 
A diagram with dots satisfies the following properties:
\begin{enumerate}[(P1)]
\item The number of dots in a diagram is one, or an even non-negative integer.
A cap, a cup, or a vertical strand contains at most one dot.
\item We consider the following three cases:
\begin{enumerate}
\item The number of dots is even. A dot is on an outer-most cap, outer-most cup or 
a left-most vertical strand.
\item The number of dots is one. A dot is on a cap or on a vertical strand, which 
connects the left-most marked point on the bottom and another marked point.
\item The diagram consisting of $n+1$ vertical strands (without caps and cups) does not
have a dot.
\end{enumerate}
\end{enumerate}
We do not consider a diagram $D$ such that the number of dots in $D$ is an odd number greater than one.

Following \cite{Gre98}, we introduce a graphical representation for $\mathrm{TL}^{D}_{n+1}$.
The generator $E_{i}$, $1\le i\le n$ is depicted as 
\begin{align*}
E_{i}=
\tikzpic{-0.5}{[xscale=.8]
\draw(-1,0)node[anchor=north]{$1$}--(-1,1)(1,0)node[anchor=north]{$i-1$}--(1,1);
\draw(0,0.5)node{$\cdots$};
\draw(2,0)node[anchor=north]{$i$}..controls(2,.5)and(3,.5)..(3,0)node[anchor=north]{$i+1$};
\draw(2,1)..controls(2,.5)and(3,.5)..(3,1);
\draw(4.2,0)node[anchor=north]{$i+2$}--(4.2,1);
\draw(5.2,0.5)node{$\cdots$};
\draw(6.2,0)node[anchor=north]{$n+1$}--(6.2,1);
}
\end{align*}
and the generator $E_{0}$ is depicted as 
\begin{align*}
E_{0}=
\tikzpic{-0.5}{[xscale=.8]
\draw(0,0)node[anchor=north]{$1$}..controls(0,0.5)and(1,0.5)..(1,0)node[anchor=north]{$2$};
\draw(0.5,0.63)node{$\bullet$};
\draw(0,1)..controls(0,0.5)and(1,0.5)..(1,1);
\draw(0.5,0.37)node{$\bullet$};
\draw(2,0)node[anchor=north]{$3$}--(2,1);
\draw(3,0.5)node{$\cdots$};
\draw(4,0)node[anchor=north]{$n+1$}--(4,1);
}
\end{align*}
The identity $\mathbf{1}$ corresponds to the diagram with $n+1$ vertical strands without dots.
The property (P2c) implies that there is no diagram which has the same connectivity as $\mathbf{1}$
and has a dot.
Note that only the generator $E_0$ has two dots, and other generators do not have dots in the graphical
representation.
The generators $E_1$ and $E_0$ have the same connectivity of marked points, and we distinguish 
between $E_0$ and $E_1$ by the existence of dots.

In the case of type $A$, we have a graphical calculation of an element $E_{w}$ which may not
be a reduced expression. If $w$ is not a reduced expression, we use the algebraic relations $E_{i}E_{i\pm1}E_{i}=E_i$
and $E_i^2=-[2]E_{i}$ to obtain a reduced expression. The former relation does not change the connectivity of 
a diagram, but the latter relation produces a loop. The weight $-[2]$ of a loop corresponds to the factor 
in the relation.
In the case of type $D$, we impose the properties (P1) and (P2) on a diagram.
Due to these properties, we can not have a graphical calculation of $E_w$ which is not 
a reduced expression.
In what follows, we consider the graphical representation of a reduced expression 
in $\mathrm{TL}^{D}_{n+1}$.

An element of $\mathrm{TL}^{D}_{n+1}$ is given by a product of generators.
Suppose that a reduced expression $E_{w}:=E_{s_{1}}\ldots E_{s_{m}}$, $s_{i}\in[0,n]$ is written as a 
product $E_{w}=E_{w_1}E_{w_2}$ of two reduced expressions $E_{w_1}$ and $E_{w_2}$.
In other words, we have $l(w)=l(w_1)+l(w_2)$.
Let $D$, $D_1$ and $D_2$ be diagrams corresponding to $w$, $w_1$ and $w_2$.
Then, the diagram $D$ is obtained from $D_1$ and $D_2$ by putting $D_1$ on the top of $D_{2}$, 
and by applying the rule (P2b) to $D$ if $D$ has an odd number of dots.

The graphical representation of generators implies that we have a loop without dots 
if we have $E_{i}^2=-[2]E_i$, $i\ge1$, in its algebraic representation. As far as we consider 
a reduced expression, we do not have such a loop.
Similarly, we have no loops with two dots since such a loop appears 
if and only if the expression contains $E_{0}^2=-[2]E_0$. 
A vertical strand, a cup, or a cap contains two dots if $E_{w}$ contains 
the product $E_0E_2E_0$ in its algebraic representation. Since we consider  
only reduced expressions, we do not have such a vertical strand, a cup or a cap.
From these, it is clear that a vertical strand, a cup or a cap contains at most 
one dot.

We may have a loop with a single dot in the diagram $D$. 
Such a loop comes from the product of two generators $E_{0}$ and $E_{1}$ in the algebraic representation.
In this case, we simply delete the loop with a dot from the diagram.
Then, since a deletion of a loop with a dot implies that we have an odd number of dots in the diagram,
we change the position of the dots such that it satisfies the condition (P2b).
In fact, if a reduced expression $E_{w}$ contains the product $E_{0}E_{1}$ (or equivalently $E_{1}E_{0}$),
we have no other $E_0$ and $E_{1}$ in its algebraic representation.
This means that if the number of dots in a diagram is odd, then we have a unique dot which comes 
from the product $E_0E_1$. Therefore, the property (P2b) is well-defined.

In this way, we obtain a graphical representation $D$ of a reduced word $E_{w}$
in $\mathrm{TL}^{D}_{n+1}$.

\begin{remark}
Three remarks are in order:
\begin{enumerate}
\item
Consider the product $E_0E_1$ of two generators. By the defining relations, we have 
$E_{0}E_{1}=E_{1}E_{0}$. If we apply  a naive representation by diagrams, $E_0E_1$ 
gives a diagram with a dotted cup and a loop with a single dot.
On the other hand, $E_{1}E_{0}$ gives a diagram with a dotted cap and a loop 
with a single dot.
However, the two products $E_{1}E_{0}$ and $E_{0}E_1$ define the same element 
in $\mathrm{TL}^{D}_{n+1}$, we need to introduce an equivalence relation on 
diagrams for $E_1E_0$ and $E_{0}E_{1}$. This is achieved in \cite{Gre98}.
\item
In our convention, to distinguish a diagram with a single dot from diagrams 
with an even number of dots, we introduce the rule (P2b).
If an element $E'$ in $\mathrm{TL}^{D}_{n+1}$ contains the product $E_{0}E_{1}$, then
it is easy to see that $E'$ contains no other $E_{0}$ or $E_1$ if $E'$ is a reduced 
expression. This means that if the number of dots is odd in a diagram $D$, then 
$D$ contains a unique dot since we have the generator $E_0$ and we delete one of the 
two dots by deleting a loop with a single dot.
We regard a unique dot as a sign of the parity of the number 
of dots.
\item By the property (P2b), we depict
\begin{align*}
E_{0}E_{1}=
\tikzpic{-0.5}{[xscale=.8]
\draw(0,0)node[anchor=north]{$1$}..controls(0,0.5)and(1,0.5)..(1,0)node[anchor=north]{$2$};
\draw(0,1)..controls(0,0.5)and(1,0.5)..(1,1);
\draw(0.5,0.37)node{$\bullet$};
\draw(2,0)node[anchor=north]{$3$}--(2,1);
\draw(3,0.5)node{$\cdots$};
\draw(4,0)node[anchor=north]{$n+1$}--(4,1);
}.
\end{align*}
The product $E_0E_1$ has the same connectivity of the strands as $E_0$ and $E_1$, and 
has a unique dot on the unique cap.
\end{enumerate}
\end{remark}

\begin{example}
\label{ex:AB}	
The graphical representations of $A=E_{2}E_{0}E_1$ and 
$B=E_{0}E_2E_{3}E_1E_{2}E_{0}$ in $\mathrm{TL}^{D}_{4}$ are given 
by 
\begin{align*}
A=\tikzpic{-0.5}{[xscale=.8,yscale=0.6]
\draw(0,0)node[anchor=north]{$1$}..controls(0,.8)and(1,.8)..(1,0)node[anchor=north]{$2$};
\draw(0,2)..controls(0,1)and(2,1)..(2,0)node[anchor=north]{$3$};
\draw(1,2)..controls(1,1.2)and(2,1.2)..(2,2);
\draw(3,0)node[anchor=north]{$4$}--(3,2);
\draw(0.5,0.58)node{$\bullet$};
},
\qquad
B=\tikzpic{-0.5}{[xscale=.8,yscale=0.6]
\draw(0,0)node[anchor=north]{$1$}..controls(0,0.8)and(1,0.8)..(1,0)node[anchor=north]{$2$};
\draw(2,0)node[anchor=north]{$3$}..controls(2,0.8)and(3,0.8)..(3,0)node[anchor=north]{$4$};
\draw(0,2)..controls(0,1.2)and(1,1.2)..(1,2);
\draw(2,2)..controls(2,1.2)and(3,1.2)..(3,2);
\draw(0.5,0.58)node{$\bullet$}(2.5,0.58)node{$\bullet$};
\draw(0.5,1.42)node{$\bullet$}(2.5,1.42)node{$\bullet$};
}.
\end{align*}
\end{example}

\subsection{Jones--Wenzl projections}
We summarize the properties which characterize the Jones--Wenzl projection of types $A$ and $D$.
The Jones--Wenzl projection $P_{n}$, $0\le n$, is a unique element in $\mathrm{TL}^{A}_{n+1}$ such that 
\begin{enumerate}
\item $P_{n}\neq0$;
\item $P_{n}P_i=P_iP_{n}=P_n$ for $0\le i\le n$;
\item $E_{i}P_{n}=P_{n}E_i=0$ for $1\le i\le n$.
\end{enumerate}
These three conditions uniquely fix the element $P_{n}$ for $0\le n$.

Similarly, the Jones--Wenzl projection $Q_{n}$, $0\le n$, of type $D$ is a unique element 
in $\mathrm{TL}^{D}_{n+1}$ such that 
\begin{enumerate}
\item $Q_{n}\neq0$;
\item $Q_{n}Q_{i}=Q_{i}Q_{n}=Q_{n}$ for $0\le i\le n$;
\item $E_{i}Q_{n}=Q_{n}E_i=0$ for $0\le i\le n$.
\end{enumerate}
The second conditions for $P_n$ and $Q_{n}$ insure that the coefficient of the identity $\mathbf{1}$ 
is one.

\begin{example}
We give first few explicit expressions of $P_n$ and $Q_n$:
\begin{align*}
&P_{0}=1, \qquad P_{1}=1+\genfrac{}{}{}{}{1}{[2]}E_{1}, \\
&P_{2}=1+\genfrac{}{}{}{}{[2]}{[3]}(E_1+E_2)+\genfrac{}{}{}{}{1}{[3]}(E_1E_2+E_2E_1),
\end{align*}
and 
\begin{align*}
&Q_{0}=1+\genfrac{}{}{}{}{1}{[2]}E_{0}, \qquad
Q_{1}=1+\genfrac{}{}{}{}{1}{[2]}(E_0+E_1)+\genfrac{}{}{}{}{1}{[2]^2}E_{0}E_1, \\
&Q_{2}=1+\genfrac{}{}{}{}{[3]}{[4]}(E_0+E_1)+\genfrac{}{}{}{}{[2]^2}{[4]}E_2 
+ \genfrac{}{}{}{}{[2]}{[4]}(E_{0}E_2+E_2E_0+E_1E_2+E_2E_1) \\
&\qquad+\genfrac{}{}{}{}{[2]^3}{[4][3]}E_{0}E_{1}
+\genfrac{}{}{}{}{1}{[4]}(E_{0}E_2E_1+E_1E_2E_0) \\
&\qquad+\genfrac{}{}{}{}{[2]^2}{[4][3]}(E_0E_1E_2+E_2E_0E_1)
+\genfrac{}{}{}{}{[2]}{[4][3]}E_{2}E_{0}E_{1}E_{2}.
\end{align*}
Since $D_{3}=A_3$, the projection $P_{3}$ is expressed in terms 
of $Q_{2}$ by replacing $E_{0}$ with $E_{3}$. 

The explicit expression of $Q_3$ is given in Appendix \ref{app:A}.
\end{example}

\section{Recurrence relations for Jones--Wenzl projections}
\label{sec:rr}
\subsection{Type \texorpdfstring{$A$}{A}}
In \cite{Wen87}, Wenzl gave a recurrence relation for $P_{n}$:
\begin{align*}
P_{n}=P_{n-1}+\genfrac{}{}{}{}{[n]}{[n+1]}P_{n-1}E_{n}P_{n-1},
\end{align*}
with the initial condition $P_{0}=1$.

In \cite{FK97,Mor17}, the above recurrence relation is further 
simplified.
For this, we define the elements in $\mathrm{TL}^{A}_{n+1}$ 
by $g^{A}_{n,i}=E_{n}E_{n-1}\ldots E_{i}$ for $1\le i\le n$, and $g^{A}_{n,n+1}=1$.
Note that the diagram corresponds to $g^{A}_{n,n+1}$ is the diagram consisting of $n+1$ vertical
strands.
Then, we have the following proposition:
\begin{prop}[{\cite[Proposition 3.3]{Mor17}}]
The coefficient of $g^{A}_{n,i}$, $1\le i\le n+1$, in $P_{n}$ is $\genfrac{}{}{}{}{[i]}{[n+1]}$, and 
$P_n$ satisfies the recurrence relation:
\begin{align*}
P_{n}=P_{n-1}\left(\sum_{i=1}^{n+1}\genfrac{}{}{}{}{[i]}{[n+1]}g^{A}_{n,i}\right).
\end{align*}
\end{prop}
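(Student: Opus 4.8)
The plan is to prove the recurrence $P_n=P_{n-1}U_n$, where I abbreviate $U_m:=\sum_{i=1}^{m+1}\frac{[i]}{[m+1]}g^A_{m,i}$, by induction on $n$ starting from Wenzl's recurrence $P_n=P_{n-1}+\frac{[n]}{[n+1]}P_{n-1}E_nP_{n-1}$ (with $P_0=1$), and then to read off the coefficient of $g^A_{n,i}$ from it. The base case $n=0$ (or, more transparently, $n=1$, where $P_1=1+\frac{1}{[2]}E_1$) is immediate, since $g^A_{0,1}=1$ and $g^A_{1,1}=E_1$, $g^A_{1,2}=1$.

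For the inductive step the two ingredients are: first, the elementary identity $g^A_{n,i}=E_ng^A_{n-1,i}$ for $1\le i\le n$ (using $g^A_{n-1,n}=1$), which gives $E_nU_{n-1}=\sum_{i=1}^n\frac{[i]}{[n]}g^A_{n,i}$; second, the commutation of $E_n$ with the subalgebra generated by $E_1,\ldots,E_{n-2}$, hence with $P_{n-2}$, since those generators are non-adjacent to $E_n$ in the type $A$ Dynkin diagram. Applying the inductive hypothesis $P_{n-1}=P_{n-2}U_{n-1}$ to the right-hand factor of $P_{n-1}E_nP_{n-1}$, commuting $E_n$ past $P_{n-2}$, and using $P_{n-1}P_{n-2}=P_{n-1}$, I obtain $P_{n-1}E_nP_{n-1}=P_{n-1}E_nU_{n-1}=P_{n-1}\sum_{i=1}^n\frac{[i]}{[n]}g^A_{n,i}$. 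Substituting into Wenzl's recurrence and absorbing the leftover $P_{n-1}=P_{n-1}g^A_{n,n+1}$ (whose prospective coefficient $\frac{[n+1]}{[n+1]}$ is $1$) yields $P_n=P_{n-1}\sum_{i=1}^{n+1}\frac{[i]}{[n+1]}g^A_{n,i}=P_{n-1}U_n$.

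It remains to extract the coefficient of $g^A_{n,i}$ from $P_n=\sum_{j=1}^{n+1}\frac{[j]}{[n+1]}P_{n-1}g^A_{n,j}$. The point is diagrammatic: $P_{n-1}\in\mathrm{TL}^A_n$ is a combination of diagrams $D$ built from $E_1,\ldots,E_{n-1}$, so each such $D$ leaves the strand $n+1$ untouched and, if $D\ne\mathbf 1$, carries at least one cup on top among the points $1,\ldots,n$. Stacking $D$ on top of $g^A_{n,j}$ preserves the bottom cap $(j,j+1)$ of $g^A_{n,j}$ and can only add top cups, whereas $g^A_{n,i}$ has a single top cup, at $(n,n+1)$, and a single bottom cap, at $(i,i+1)$. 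Hence $Dg^A_{n,j}$ can be a scalar multiple of $g^A_{n,i}$ only when $j=i$ (forced by the bottom cap) and $D=\mathbf 1$ (since no such $D$ produces the top cup at $(n,n+1)$), and then $Dg^A_{n,i}=g^A_{n,i}$ with no closed loop. Therefore the coefficient of $g^A_{n,i}$ in $P_{n-1}g^A_{n,j}$ equals $\delta_{ij}$ times the coefficient of $\mathbf 1$ in $P_{n-1}$, which is $1$; consequently the coefficient of $g^A_{n,i}$ in $P_n$ is $\frac{[i]}{[n+1]}$, which closes the induction.

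I do not anticipate a genuine difficulty: the substance is the reformulation $g^A_{n,i}=E_ng^A_{n-1,i}$ together with the commutation $E_nP_{n-2}=P_{n-2}E_n$, which convert Wenzl's recurrence into the stated one. The only step demanding care is the diagrammatic bookkeeping in the last paragraph — verifying that stacking a diagram from $\mathrm{TL}^A_n$ on top of $g^A_{n,j}$ neither erases its bottom cap nor creates a top cup at $(n,n+1)$ — so that the sum over the diagrams occurring in $P_{n-1}$ collapses to the single contribution of $\mathbf 1$.
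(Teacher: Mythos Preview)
Your argument is correct and follows essentially the same line as Morrison's proof (which the paper cites without reproducing) and the paper's own proof of the type $D$ analogue, Proposition~\ref{prop:Q1}: reduce Wenzl's recurrence by killing the ``irrelevant'' part of the inner $P_{n-1}$, then read off coefficients. Your packaging via the identity $P_{n-1}E_nP_{n-1}=P_{n-1}P_{n-2}E_nU_{n-1}=P_{n-1}E_nU_{n-1}$ is a clean variant of the $\mathcal{K}/\mathcal{K}^\perp$ argument the paper uses for type $D$ (where one instead observes that every diagram outside the span of the $g_{n,i}$ factors as $E_jD'$ with $j\le n-1$ and is annihilated by $P_{n-1}$ on the left).

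One phrasing to tighten: the claim that stacking $D$ on top of $g^A_{n,j}$ ``can only add top cups'' is not literally true (the cup $(n,n+1)$ of $g^A_{n,j}$ need not survive --- take $D=E_{n-1}$), and likewise ``no such $D$ produces the top cup at $(n,n+1)$'' is not quite the right reason. The sound version of your argument is already implicit in what you wrote just before: if $D\ne\mathbf 1$ then $D$ has a top cup $(a,b)$ with $b\le n$, and since positions $1,\ldots,n$ on the top of $Dg^A_{n,j}$ are exactly the top of $D$, the product $Dg^A_{n,j}$ inherits that cup; but no $g^A_{n,i}$ has a top cup contained in $\{1,\ldots,n\}$. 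That, together with the bottom-cap matching $j=i$, gives the coefficient extraction.
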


\subsection{Type \texorpdfstring{$D$}{D}}
To state the recurrence relation for $Q_{n}$, we introduce some notations.
We define the element $E_{w_n}$ in $\mathrm{TL}^{D}_{n+1}$ by 
\begin{align*}
E_{w_n}:=E_{n}E_{n-1}\cdots E_{2}E_{0}E_{1}E_{2}\cdots E_{n},
\end{align*} 
and define two coefficients
\begin{align*}
A_{n}:=
\begin{cases}
\displaystyle\genfrac{}{}{}{}{1}{[2]}, & \text{ if } n=1, \\[10pt]
\displaystyle\genfrac{}{}{}{}{[n][2n-2]}{[2n][n-1]}, & \text{ otherwise },
\end{cases}
\qquad
B_{n}:=\genfrac{}{}{}{}{[n]}{[2n][n+1]}.
\end{align*}

\begin{prop}[{\cite[Proposition 4.8]{Sen19}}]
\label{prop:recQ1}
The generalized Jones--Wenzl projection $Q_{n}$ satisfies the following recurrence relation:
\begin{align}
\label{eq:recQ1}
Q_{n}=Q_{n-1}+A_{n}Q_{n-1}E_{n}Q_{n-1}+B_{n}Q_{n-1}E_{w_n}Q_{n-1}.
\end{align}
\end{prop}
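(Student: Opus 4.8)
The plan is to identify the right-hand side, call it $R_n := Q_{n-1}+A_{n}Q_{n-1}E_{n}Q_{n-1}+B_{n}Q_{n-1}E_{w_n}Q_{n-1}$, with $Q_n$ by means of the uniqueness characterization of the Jones--Wenzl projection: it is enough to check that $R_n$ is nonzero, has coefficient $1$ on $\mathbf 1$, and satisfies $E_iR_n=R_nE_i=0$ for all $0\le i\le n$, after which uniqueness forces $R_n=Q_n$. I would run this as an induction on $n$, verifying $n=0,1$ directly against the explicit $Q_0,Q_1$ (for $n=1$ the last summand drops out, since $Q_0E_{w_1}Q_0=0$). In the inductive step $Q_{n-1}$ is regarded as an element of $\mathrm{TL}^D_{n+1}$ through the subalgebra $\mathrm{TL}^D_n=\langle E_0,\dots,E_{n-1}\rangle$; by the inductive hypothesis it is idempotent, normalized, and annihilated on both sides by $E_0,\dots,E_{n-1}$.

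The routine part comes first. Since every summand of $R_n$ is flanked by factors $Q_{n-1}$, the identities $E_iR_n=R_nE_i=0$ for $i\le n-1$ are immediate; the two correction terms lie in the two-sided ideal generated by $E_n$, on which the ``coefficient of $\mathbf 1$'' functional $\varepsilon$ vanishes, so the $\mathbf 1$-coefficient of $R_n$ equals that of $Q_{n-1}$, namely $1$, and in particular $R_n\neq0$. What is left is $E_nR_n=0$; the mirror identity $R_nE_n=0$ will then follow by applying the order-reversing anti-involution $E_i\mapsto E_i$, under which both $R_n$ and $E_{w_n}$ are fixed (the latter because $E_0E_1=E_1E_0$, so the palindrome survives reversal).

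Next I would set up the two reduction identities that $E_nR_n=0$ rests on. The first is $Q_{n-1}aQ_{n-1}=\varepsilon(a)\,Q_{n-1}$ for every $a\in\mathrm{TL}^D_n$, an easy consequence of $Q_{n-1}^2=Q_{n-1}$ together with $Q_{n-1}E_i=0$ for $i\le n-1$. The second is a conditional-expectation identity $E_nxE_n=\phi(x)\,E_n$ for $x\in\mathrm{TL}^D_n$ with $\phi(x)\in\mathrm{TL}^D_{n-1}=\langle E_0,\dots,E_{n-2}\rangle$, obtained from $E_nE_{n-1}E_n=E_n$ and $[E_n,E_i]=0$ for $i\le n-2$; applied to $Q_{n-1}$, the element $\phi(Q_{n-1})$ is killed on both sides by $E_0,\dots,E_{n-2}$, hence is a scalar multiple of $Q_{n-2}$, say $E_nQ_{n-1}E_n=\mu_n\,Q_{n-2}E_n$. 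Because $Q_{n-2}$ commutes with $E_n$ and $Q_{n-2}Q_{n-1}=Q_{n-1}$, this already gives $E_nQ_{n-1}E_nQ_{n-1}=\mu_n\,E_nQ_{n-1}$, and, writing $E_{w_n}=E_nE_{w_{n-1}}E_n$, it reduces $E_nQ_{n-1}E_{w_n}Q_{n-1}$ to $\mu_n\,E_nQ_{n-2}E_{w_{n-1}}E_nQ_{n-1}$, which I would rewrite using the level-$(n-1)$ case of the proposition (the known shape of $Q_{n-1}$) and the first identity. The outcome is that $E_nQ_{n-1}E_nQ_{n-1}$ and $E_nQ_{n-1}E_{w_n}Q_{n-1}$ are explicit $\mathbb C(q)$-combinations of the two linearly independent elements $E_nQ_{n-1}$ and $E_{w_n}Q_{n-1}$; substituting into $E_nR_n=E_nQ_{n-1}+A_nE_nQ_{n-1}E_nQ_{n-1}+B_nE_nQ_{n-1}E_{w_n}Q_{n-1}=0$ produces a $2\times2$ linear system over $\mathbb C(q)$, whose solution, after simplifying $q$-integers, is exactly the stated $A_n,B_n$ (and accounts for the exceptional value at $n=1$). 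A cleaner route for this last bookkeeping is the Markov trace on $\mathrm{TL}^D_{n+1}$: the conditions $\mathrm{tr}(E_nQ_n)=0$ and $\mathrm{tr}(E_{w_n}Q_n)=0$ translate directly into the same two scalar equations.

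The hard part will be the reduction of the genuinely type-$D$ sandwich $E_nQ_{n-1}E_{w_n}Q_{n-1}$: since $E_{w_n}$ is the long palindromic word threading through the ``dotted'' $E_0E_1$ block, it does not slide past $Q_{n-1}$, and making the rewriting explicit will require either careful diagrammatic control of how the rightmost strand meets that block or an induction that substitutes the level-$(n-1)$ expression for $Q_{n-1}$ and tracks the ensuing $q$-integer identities. A companion subtlety, needed so that no further terms can occur, is the structural claim that $Q_{n-1}(E_n)Q_{n-1}$ is spanned by $Q_{n-1}E_nQ_{n-1}$ and $Q_{n-1}E_{w_n}Q_{n-1}$ --- a finite check on $(n+1)$-strand Temperley--Lieb diagrams with dots satisfying (P1)--(P2), isolating precisely the two ways the $(n+1)$-st strand can be used once it is closed off by $Q_{n-1}$ on top and bottom.
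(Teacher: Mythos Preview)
The paper does not supply its own proof of this proposition: it is quoted from Sentinelli \cite[Proposition~4.8]{Sen19} and then taken as the starting point for the paper's simplification in Proposition~\ref{prop:Q1}. There is therefore no argument in the present paper to compare your proposal against.

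That said, your strategy is the standard route for Wenzl-type recursions and is sound as far as it goes. Verifying $E_iR_n=0$ for $i\le n-1$ from the flanking $Q_{n-1}$'s, reading off $\varepsilon(R_n)=1$ from the ideal generated by $E_n$, and reducing the remaining condition to $E_nR_n=0$ via the anti-involution are all correct. The conditional-expectation identity $E_nQ_{n-1}E_n=\mu_n\,Q_{n-2}E_n$ holds in type $D$ for the same reasons as in type $A$, and your plan to feed the inductive expression for $Q_{n-1}$ back into the type-$D$ sandwich $E_nQ_{n-1}E_{w_n}Q_{n-1}$ is the right idea. You are candid that this last reduction, together with the structural claim that $Q_{n-1}\,\mathrm{TL}^D_{n+1}\,Q_{n-1}$ modulo $Q_{n-1}$ is two-dimensional, is only sketched; that is exactly where the substance of Sentinelli's original argument lies, carried out there via the Jones basic-construction formalism for the tower of type-$D$ Temperley--Lieb algebras. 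Your outline is a faithful paraphrase of that method, but as written it is a plan rather than a complete proof.
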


To simplify Eq. (\ref{eq:recQ1}) further, we first define 
some elements in $\mathrm{TL}^{D}_{n+1}$.
For $i=1,2,\ldots,n+1$ and $j=0,1,\ldots,n$,
we define 
\begin{align*}
&g_{n,i}:=\begin{cases}
1, & \text{ if } i=n+1, \\
E_{n}E_{n-1}\cdots E_{i}, & \text{ if } i\le n,
\end{cases}\\
&h_{n,j}:=\begin{cases}
E_{n}E_{n-1}\cdots E_{2}E_{0}, & \text{ if } j=0, \\
E_{n}E_{n-1}\cdots E_{2}E_{0}E_{1}E_2\cdots E_{j}, & \text{ if } j\ge1.
\end{cases}
\end{align*}
Note that we have $h_{n,n}=E_{w_n}$.

\begin{prop}
\label{prop:Q1}
The projections $Q_{n}$ satisfy the following recurrence relation:
\begin{align}
\label{eq:Q2}
Q_{n+1}=Q_{n}\left(\sum_{i=1}^{n+2}\mathrm{coef}(g_{n+1,i})g_{n+1,i}
+\sum_{j=0}^{n+1}\mathrm{coef}(h_{n+1},j)h_{n+1,j}\right),
\end{align}
where the two coefficients are given by 
\begin{align}
\label{eq:coefgh}
\mathrm{coef}(g_{n,i}):=
\begin{cases}
\displaystyle\genfrac{}{}{}{}{[n]}{[2n]}, & \text{ if } i=1, \\[10pt]
\displaystyle\genfrac{}{}{}{}{[n][2i-2]}{[2n][i-1]}, & \text{ otherwise},
\end{cases}
\qquad
\mathrm{coef}(h_{n,j}):=\genfrac{}{}{}{}{[n][n+1-j]}{[2n][n+1]},
\end{align}
for $1\le i\le n+1$ and $0\le j\le n$.
\end{prop}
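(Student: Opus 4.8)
The plan is to derive the simplified recurrence (\ref{eq:Q2}) with the explicit coefficients (\ref{eq:coefgh}) directly from Sentinelli's recurrence (\ref{eq:recQ1}) in Proposition \ref{prop:recQ1}, following the strategy of Frenkel--Khovanov and Morrison in type $A$. The key algebraic input is the defining property $E_{i}Q_{n-1}=Q_{n-1}E_{i}=0$ for $0\le i\le n-1$, together with $Q_{n-1}Q_{i}=Q_{n-1}$. First I would expand $Q_{n-1}E_{n}Q_{n-1}$ and $Q_{n-1}E_{w_{n}}Q_{n-1}$ by ``pushing'' $Q_{n-1}$ through the words $E_{n}$ and $E_{w_{n}}=E_{n}E_{n-1}\cdots E_{2}E_{0}E_{1}E_{2}\cdots E_{n}$. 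Since $Q_{n-1}$ lives in the subalgebra generated by $E_{0},\dots,E_{n-1}$ and annihilates each of those generators, the only terms that survive are those obtained by simplifying the word using the braid-like relations $E_{i}E_{i\pm1}E_{i}=E_{i}$ until a reduced expression of the form $g_{n,i}$ or $h_{n,j}$ emerges on the right-hand side, with $Q_{n-1}$ absorbed on the left. This is exactly the mechanism by which Morrison's Proposition 3.3 is proved in type $A$, and I would set it up so that the type-$D$ case reduces to two parallel computations: one ``even'' branch producing the $g_{n,i}$ (essentially the type-$B$/type-$A$ computation, cf.\ the remark preceding Proposition \ref{prop:Deven}) and one ``odd'' branch producing the $h_{n,j}$, the latter being where the factor $E_{0}E_{1}$ enters.

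The second step is to pin down the coefficients by induction on $n$. Assume (\ref{eq:Q2}) and (\ref{eq:coefgh}) hold up to level $n$; substitute $Q_{n}=Q_{n-1}\bigl(\sum_{i}\mathrm{coef}(g_{n,i})g_{n,i}+\sum_{j}\mathrm{coef}(h_{n,j})h_{n,j}\bigr)$ into the right-hand side of (\ref{eq:recQ1}) at level $n+1$, and collect terms. The identity term $Q_{n}$ contributes the coefficient $\genfrac{}{}{}{}{[n+1]}{[2n+1]}$--wait, more precisely it contributes to $\mathrm{coef}(g_{n+1,n+2})$ and, after multiplying out, redistributes to lower-index $g$'s and to the $h$'s. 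The middle term $A_{n+1}Q_{n}E_{n+1}Q_{n}$ and the last term $B_{n+1}Q_{n}E_{w_{n+1}}Q_{n}$ each expand into sums of $g_{n+1,i}$'s and $h_{n+1,j}$'s with coefficients that are products of the inductive coefficients with $A_{n+1}$ or $B_{n+1}$. The whole computation then collapses to a finite set of $q$-integer identities: one has to check that the total coefficient of each $g_{n+1,i}$ equals $\genfrac{}{}{}{}{[n+1][2i-2]}{[2n+2][i-1]}$ (or $\genfrac{}{}{}{}{[n+1]}{[2n+2]}$ when $i=1$) and that the total coefficient of each $h_{n+1,j}$ equals $\genfrac{}{}{}{}{[n+1][n+2-j]}{[2n+2][n+2]}$.

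Concretely, the telescoping identities needed are of the type
\begin{align*}
\frac{[n]}{[2n]}+A_{n+1}\cdot(\text{sum from the }E_{n+1}\text{-term})
+B_{n+1}\cdot(\text{sum from the }E_{w_{n+1}}\text{-term})
=\frac{[n+1][2i-2]}{[2n+2][i-1]},
\end{align*}
and the analogous ones for the $h$-coefficients, all of which follow from the addition formula $[a][b]-[a-1][b-1]=[a+b-1]$ (equivalently $[a+b]+[a-b]=[a][2b]/[b]$ type manipulations) after clearing denominators. I would verify the base cases $n=0,1$ against the explicit $Q_{0},Q_{1},Q_{2}$ already displayed in the excerpt, which simultaneously fixes the normalization coming from condition (2) (coefficient of $\mathbf 1$ equal to $1$).

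The main obstacle I expect is the bookkeeping in the odd branch: tracking exactly which reduced words $h_{n+1,j}$ arise when $Q_{n}$ is pushed through $E_{w_{n+1}}=E_{n+1}E_{n}\cdots E_{2}E_{0}E_{1}E_{2}\cdots E_{n+1}$, and making sure the factor $E_{0}E_{1}$ is handled consistently with the graphical conventions (P1), (P2b) of Section \ref{sec:grep} so that no spurious cancellations or double-counting occur. Once the correct list of surviving words and their raw coefficients is established, the remaining $q$-integer identities are routine. A useful sanity check throughout is that specializing $q\to1$ (so $[k]\to k$) must reproduce the corresponding recurrence for the ordinary Temperley--Lieb dimension counts, which gives a quick consistency test on the combinatorics of the expansion before the full $q$-deformed identities are verified.
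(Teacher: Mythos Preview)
Your approach is essentially the same as the paper's: induction on $n$, using the annihilation property $Q_{n}E_{j}=0$ for $0\le j\le n$ to reduce $Q_{n}E_{n+1}Q_{n}$ and $Q_{n}E_{w_{n+1}}Q_{n}$ to sums over the special words $g_{n,i}$ and $h_{n,j}$, then verifying the resulting $q$-integer identities.

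The paper organizes this slightly more cleanly than your sketch. Rather than ``pushing $Q_{n-1}$ through'' and simplifying until reduced words emerge, it sets $\mathcal{K}_{n}=\mathrm{span}\{g_{n,i},h_{n,j}\}$ and observes that every element of $\mathcal{K}_{n}^{\perp}$ factors as $E_{j}D'$ with $j\le n-1$; since $E_{n+1}$ (and each letter of $E_{w_{n+1}}$ except the outermost $E_{n+1}$'s) commutes past such $E_{j}$, one gets $Q_{n}E_{n+1}Q_{n}^{\mathcal{K}^{\perp}}=0$ and $Q_{n}E_{w_{n+1}}Q_{n}^{\mathcal{K}^{\perp}}=0$ in one stroke. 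The ``bookkeeping in the odd branch'' you flag as the main obstacle is then completely controlled by a short lemma (Lemma~\ref{lemma:Eongh}) computing $E_{n+1}g_{n,i}$, $E_{n+1}h_{n,j}$, $E_{w_{n+1}}g_{n,i}$, and $E_{w_{n+1}}h_{n,j}$ explicitly --- each is a single $g_{n+1,\ast}$ or $h_{n+1,\ast}$ up to a scalar (the only nontrivial scalar being the $-[2]$ from $E_{w_{n+1}}h_{n,j}$ for $j\ge1$). Isolating that lemma makes the coefficient recursion and its solution transparent, and avoids any appeal to the diagrammatic conventions (P1), (P2b), which are not needed at this purely algebraic stage.
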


Before proceeding with the proof of Proposition \ref{prop:Q1}, we write 
down the actions of $E_{n+1}$ and $E_{w_{n+1}}$ on $g_{n,i}$ and $h_{n,j}$.
\begin{lemma}
\label{lemma:Eongh}
We have 
\begin{align*}
&E_{n+1}g_{n,i}=g_{n+1,i}, \qquad \text{for } 1\le i\le n+1, \\
&E_{n+1}h_{n,j}=h_{n+1,j}, \qquad \text{for } 0\le j\le n, \\
&E_{w_{n+1}}g_{n,i}=h_{n+1,i}, \qquad \text{for }1\le i\le n+1, \\
&E_{w_{n+1}}h_{n,0}=h_{n+1,1}, \\
&E_{w_{n+1}}h_{n,j}=-[2]h_{n+1,j}, \qquad \text{for } 1\le j\le n.
\end{align*}
\end{lemma}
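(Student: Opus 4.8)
The plan is to verify each of the five identities in Lemma~\ref{lemma:Eongh} by a direct computation in $\mathrm{TL}^{D}_{n+1}$, reducing each product of generators to a reduced expression using only the braid-type relation $E_iE_jE_i=E_i$ for adjacent nodes, the commutation relation for non-adjacent nodes, and the quadratic relation $E_i^2=-[2]E_i$. The key observation is that $g_{n+1,i}=E_{n+1}E_n\cdots E_i$ already begins with $E_{n+1}$ followed by $g_{n,i}=E_nE_{n-1}\cdots E_i$, so the first identity $E_{n+1}g_{n,i}=g_{n+1,i}$ is immediate from the definitions for $1\le i\le n$, and for $i=n+1$ it reads $E_{n+1}\cdot 1=g_{n+1,n+1}=E_{n+1}$, again by definition. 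The second identity $E_{n+1}h_{n,j}=h_{n+1,j}$ is similarly immediate: prepending $E_{n+1}$ to $h_{n,0}=E_n\cdots E_2E_0$ gives $E_{n+1}E_n\cdots E_2E_0=h_{n+1,0}$, and prepending $E_{n+1}$ to $h_{n,j}=E_n\cdots E_2E_0E_1E_2\cdots E_j$ gives $h_{n+1,j}$ for $j\ge 1$.

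The remaining three identities involve $E_{w_{n+1}}=E_{n+1}E_n\cdots E_2E_0E_1E_2\cdots E_{n+1}$, and here the work is to push the long right tail $E_2\cdots E_{n+1}$ of $E_{w_{n+1}}$ through the left factor $g_{n,i}$ or $h_{n,j}$. First I would establish the third identity $E_{w_{n+1}}g_{n,i}=h_{n+1,i}$ for $1\le i\le n+1$. Writing $E_{w_{n+1}}=h_{n+1,n+1}$ by the remark following the definition of $g_{n,i},h_{n,j}$ (indeed $h_{n+1,n+1}=E_{n+1}\cdots E_2E_0E_1E_2\cdots E_{n+1}=E_{w_{n+1}}$), the claim is that $h_{n+1,n+1}\,g_{n,i}=h_{n+1,i}$. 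The mechanism is the standard telescoping: the rightmost block $E_{n+1}$ of $E_{w_{n+1}}$ meets the leftmost block $E_n\cdots E_i$ of $g_{n,i}$, and one repeatedly applies $E_kE_{k-1}E_k=E_k$-type cancellations from the top down. Concretely, $E_{n+1}\cdot E_nE_{n-1}\cdots E_i$: the tail of $E_{w_{n+1}}$ is $\cdots E_nE_{n+1}$ and multiplying by $E_nE_{n-1}\cdots E_i$ lets $E_{n+1}E_n\cdots$ telescope. I would organize this as an induction on $n+1-i$ (the length of $g_{n,i}$), with base case $i=n+1$ where $g_{n,n+1}=1$ and the identity is the tautology $E_{w_{n+1}}=h_{n+1,n+1}$, and inductive step using the single relation that lets one more generator be absorbed.

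For the fourth identity $E_{w_{n+1}}h_{n,0}=h_{n+1,1}$, note $h_{n,0}=E_n\cdots E_2E_0$, and one pushes the right tail of $E_{w_{n+1}}$ through $E_n\cdots E_2$ as before, leaving a controlled interaction between the central $E_0,E_1$ block of $E_{w_{n+1}}$ and the trailing $E_0$ of $h_{n,0}$; here one uses $E_1E_0=E_0E_1$ and $E_0^2$ does not occur because $E_0$ appears at most once in a reduced product — the expected outcome $h_{n+1,1}=E_{n+1}\cdots E_2E_0E_1E_2$ is reduced. For the fifth identity $E_{w_{n+1}}h_{n,j}=-[2]h_{n+1,j}$, $1\le j\le n$, the extra factor $-[2]$ is precisely the signal that a loop (i.e.\ a repeated generator creating $E_k^2$) appears: pushing the tail $E_2\cdots E_{n+1}$ of $E_{w_{n+1}}$ into $h_{n,j}=E_n\cdots E_2E_0E_1E_2\cdots E_j$, after the telescoping through $E_n\cdots E_2$ one reaches the $E_0E_1$ block, and the trailing $E_2\cdots E_j$ of both factors forces an $E_kE_k$ collision that yields the scalar $-[2]$ via $E_k^2=-[2]E_k$. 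The main obstacle is bookkeeping: making the telescoping precise enough to see exactly where (and that exactly once) the quadratic relation fires in the fifth case while it fires \emph{not at all} in the third and fourth, and keeping track of which generators survive so that the right-hand side is manifestly $h_{n+1,j}$. I would handle this uniformly by first proving a sublemma computing $(E_mE_{m-1}\cdots E_2E_0E_1E_2\cdots E_m)\cdot(E_{m-1}E_{m-2}\cdots E_i)$ and $(E_mE_{m-1}\cdots E_2E_0E_1E_2\cdots E_m)\cdot(E_{m-1}\cdots E_2E_0E_1E_2\cdots E_j)$ in closed form by induction on $m$, from which all five identities of the lemma follow by specialization with $m=n+1$; the induction on $m$ isolates a single relation application per step and makes the appearance of $-[2]$ transparent.
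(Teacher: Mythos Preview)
Your approach is correct and is exactly what the paper does: its proof is the single sentence ``We calculate them by use of the definitions of $g_{n,i}$ and $h_{n,j}$, and the defining relations of $\mathrm{TL}^{D}_{n+1}$,'' and your plan is a fleshed-out version of that direct calculation.

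Two small corrections to your sketch are worth making before you write it out in full. First, in the fourth identity, after the telescoping $(E_2\cdots E_{n+1})(E_n\cdots E_2)=E_2$ you are left with $E_{n+1}\cdots E_2E_0E_1E_2E_0$; the reduction to $h_{n+1,1}=E_{n+1}\cdots E_2E_0E_1$ (note: no trailing $E_2$, your stated form of $h_{n+1,1}$ has a typo) uses $E_0E_1=E_1E_0$ \emph{together with} the braid relation $E_0E_2E_0=E_0$, not commutation alone. Second, in the fifth identity the factor $-[2]$ does not come from the trailing $E_2\cdots E_j$ factors colliding: after the same telescoping one has $E_{n+1}\cdots E_2E_0E_1E_2E_0E_1E_2\cdots E_j$, and the scalar arises inside the central block via $E_0E_1E_2E_0E_1=E_1(E_0E_2E_0)E_1=E_1E_0E_1=E_0E_1^2=-[2]E_0E_1$, so it is $E_1^2$ (equivalently $E_0^2$) that fires, not any $E_k$ with $k\ge 2$.
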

\begin{proof}
We calculate them by use of the definitions of $g_{n,i}$ and $h_{n,j}$, and the defining 
relations of $\mathrm{TL}^{D}_{n+1}$.
\end{proof}

\begin{proof}[Proof of Proposition \ref{prop:Q1}]
We prove the statement by induction on $n$ following the method developed in \cite{Mor17}. 
For $n=1$, one can show Eq. (\ref{eq:Q2}) by a simple calculation. 
We assume that Eq. (\ref{eq:Q2}) holds up to $n-1$.

Let $\mathcal{K}_{n}$ be the linear span of the elements $g_{n,i}$ and $h_{n,j}$,
and $\mathcal{K}^{\perp}_{n}$ the linear span of the other elements.
Then, an element $D\in\mathcal{K}^{\perp}_{n}$ can be written as 
$D=E_{j}D'$ with some $j\in\{0,1,2,\ldots,n-1\}$ and $D'$.
This implies that $Q_{n}E_{n+1}\mathcal{K}^{\perp}_{n}=0$ since 
we have $E_{n+1}E_{j}=E_{j}E_{n+1}$ and $Q_{n}E_{j}=0$ by definition of $Q_{n}$.
The projection $Q_{n}$ is decomposed as $Q_{n}=Q_{n}^{\mathcal{K}}+Q_{n}^{\mathcal{K}^{\perp}}$
with $Q_{n}^{\mathcal{K}}\in\mathcal{K}_{n}$ and $Q_{n}^{\mathcal{K}^{\perp}}\in\mathcal{K}^{\perp}_{n}$.
We have 
\begin{align*}
&Q_{n}E_{n+1}Q_{n}=Q_{n}E_{n+1}(Q_{n}^{\mathcal{K}}+Q_{n}^{\mathcal{K}^{\perp}})=Q_{n}E_{n+1}Q_{n}^{\mathcal{K}}, \\
&Q_{n}E_{w_{n+1}}Q_{n}=Q_{n}E_{w_{n+1}}Q_{n}^{\mathcal{K}}.
\end{align*}
By combining these together, we have
\begin{align}
\label{eq:Q2rec}
\begin{aligned}
Q_{n+1}&=Q_{n}+A_{n+1}Q_{n}E_{n+1}Q_{n}+B_{n+1}Q_{n}E_{w_{n+1}}Q_{n}, \\
&=Q_{n}+A_{n+1}Q_{n}E_{n+1}Q_{n}^{\mathcal{K}}+B_{n+1}Q_{n}E_{w_{n+1}}Q_{n}^{\mathcal{K}}.
\end{aligned}
\end{align}
Since $Q_{n}^{\mathcal{K}}\in\mathcal{K}_{n}$, $Q_{n}$ has an expression of the following form:
\begin{align}
\label{eq:Qansatz}
Q_{n}^{\mathcal{K}}=\sum_{i}\mathrm{coef}'(g_{n,i})g_{n,i}+\sum_{j}\mathrm{coef}'(h_{n,j})h_{n,j}.
\end{align}
with some coefficients $\mathrm{coef}'(g_{n,i})$ and $\mathrm{coef}'(h_{n,j})$.
From Eqs. (\ref{eq:Q2rec}) and (\ref{eq:Qansatz}), Lemma \ref{lemma:Eongh} and the induction hypothesis,
we obtain recursive formulas for $\mathrm{coef}'(g_{n,i})$, $\mathrm{coef}'(h_{n,j})$, $\mathrm{coef}(g_{n,i})$
and $\mathrm{coef}(h_{n,j})$.
These recursive formulas can be solved as 
\begin{align*}
\mathrm{coef}(g_{n,i})=\mathrm{coef}'(g_{n,i})=\genfrac{}{}{}{}{[n][2i-2]}{[2n][i-1]},\qquad
\mathrm{coef}(h_{n,j})=\mathrm{coef}'(h_{n,j})=\genfrac{}{}{}{}{[n][n+1-j]}{[2n][n+1]},
\end{align*}
which implies Eq. (\ref{eq:coefgh}).
By substituting these into Eq. (\ref{eq:Q2rec}) and rearranging the terms, we obtain
Eq. (\ref{eq:Q2}), which completes the proof.
\end{proof}

\subsection{Recurrence relation via diagrams}
\label{sec:rrd}
To derive the recurrence relation via diagrams, we first consider the 
graphical representation of the elements $g_{n,i}$ and $h_{n,j}$
with $1\le i\le n+1$ and $0\le j\le n$.
The element $g_{n,i}$, $1\le i\le n$ has a cup connecting the right-most point 
with the point next to it on the top, a cap connecting the $i$-th point 
with the $i+1$-th point on the bottom, and $n-1$ vertical strands.
The element $g_{n,n+1}$ consists of $n+1$ vertical strands.
Similarly, the element $h_{n,j}$, $1\le j\le n$, has a cup connecting
the right-most point with the point next to it on the top, a cap connecting 
the $j$-th point with the $j+1$-th point on the bottom, and $n-1$ vertical
strands. Further, a cap or a strand which contains the left-most point in the bottom
has a dot. 
The element $h_{n,0}$ has the same connectivity as $g_{n,1}$, and 
the unique cap and the left-most vertical strands have a dot. 
These elements $g_{n,i}$ and $h_{n,j}$ satisfy the properties (P1) and (P2).
In Figure \ref{fig:gh}, we depict the elements $g_{3,i}$ and $h_{3,j}$.
\begin{figure}[ht]
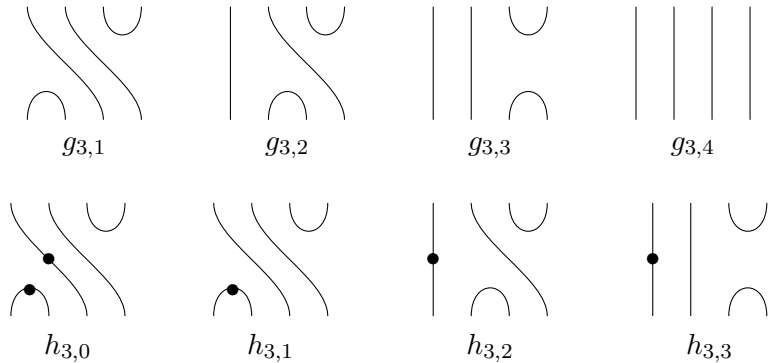

\tikzpic{-0.5}{[scale=0.5]
\draw(0,0)..controls(0,1)and(1,1)..(1,0);
\draw(0,3)..controls(0,2)and(2,1)..(2,0);
\draw(1,3)..controls(1,2)and(3,1)..(3,0);
\draw(2,3)..controls(2,2)and(3,2)..(3,3);
\draw(1.5,-0.2)node[anchor=north]{$g_{3,1}$};
}\qquad
\tikzpic{-0.5}{[scale=0.5]
\draw(1,0)..controls(1,1)and(2,1)..(2,0);
\draw(0,3)--(0,0);
\draw(1,3)..controls(1,2)and(3,1)..(3,0);
\draw(2,3)..controls(2,2)and(3,2)..(3,3);
\draw(1.5,-0.2)node[anchor=north]{$g_{3,2}$};
}\qquad
\tikzpic{-0.5}{[scale=0.5]
\draw(2,0)..controls(2,1)and(3,1)..(3,0);
\draw(0,3)--(0,0);
\draw(1,3)--(1,0);
\draw(2,3)..controls(2,2)and(3,2)..(3,3);
\draw(1.5,-0.2)node[anchor=north]{$g_{3,3}$};
}\qquad
\tikzpic{-0.5}{[scale=0.5]
\draw(2,0)--(2,3);
\draw(0,3)--(0,0);
\draw(1,3)--(1,0);
\draw(3,3)--(3,0);
\draw(1.5,-0.2)node[anchor=north]{$g_{3,4}$};
} \\[12pt]
\tikzpic{-0.5}{[scale=0.5]
\draw(0,0)..controls(0,1)and(1,1)..(1,0);
\draw(0,3)..controls(0,2)and(2,1)..(2,0);
\draw(1,3)..controls(1,2)and(3,1)..(3,0);
\draw(2,3)..controls(2,2)and(3,2)..(3,3);
\draw(1.5,-0.2)node[anchor=north]{$h_{3,0}$};
\draw(0.5,0.68)node{$\bullet$}(1,1.5)node{$\bullet$};
}\qquad
\tikzpic{-0.5}{[scale=0.5]
\draw(0,0)..controls(0,1)and(1,1)..(1,0);
\draw(0,3)..controls(0,2)and(2,1)..(2,0);
\draw(1,3)..controls(1,2)and(3,1)..(3,0);
\draw(2,3)..controls(2,2)and(3,2)..(3,3);
\draw(1.5,-0.2)node[anchor=north]{$h_{3,1}$};
\draw(0.5,0.68)node{$\bullet$};
}\qquad
\tikzpic{-0.5}{[scale=0.5]
\draw(1,0)..controls(1,1)and(2,1)..(2,0);
\draw(0,3)--(0,0);
\draw(1,3)..controls(1,2)and(3,1)..(3,0);
\draw(2,3)..controls(2,2)and(3,2)..(3,3);
\draw(1.5,-0.2)node[anchor=north]{$h_{3,2}$};
\draw(0,1.5)node{$\bullet$};
}\qquad
\tikzpic{-0.5}{[scale=0.5]
\draw(2,0)..controls(2,1)and(3,1)..(3,0);
\draw(0,3)--(0,0);
\draw(1,3)--(1,0);
\draw(2,3)..controls(2,2)and(3,2)..(3,3);
\draw(1.5,-0.2)node[anchor=north]{$h_{3,3}$};
\draw(0,1.5)node{$\bullet$};
}
\caption{The graphical representation of $g_{3,i}$ and $h_{3,j}$.}
\label{fig:gh}
\end{figure}

We recall the recurrence relation via diagrams for $\mathrm{TL}^{A}_{n+1}$.
In \cite{Mor17} (see also \cite{FK97}), Morrison gave a recursive formula for the coefficients of 
the elements, equivalently for $n+1$-strand Temperley--Lieb diagrams.  

By definition, note that the vertical strand which connects the right-most points on the top and on the bottom
is called an inner-most cap.
Any $n+1$-strand Temperley--Lieb diagram has at least one inner-most cap.
This property allows us to have a recurrence relation via diagrams. 

Let $D$ be an $n+1$-strand Temperley--Lieb diagram.
Suppose that an inner-most cap $c$ in $D$ connects the $i$-th bottom point from left and 
$i+1$-th bottom point from left for $1\le i\le n$.
Then, the position of $c$ is defined as $i$.
If the right-most points on the top and on the bottom in $D$ are connected by a vertical strand,
the position of this inner-most cap is defined as $n+1$.

We first recall the recurrence relation the coefficients of the projection for 
type $A$.
\begin{prop}[Proposition 4.1 in \cite{Mor17}]
\label{prop:coefDA}
Let $D$ be an $n+1$-strand Temperley--Lieb diagram.
Let $I(D)\subseteq[1,n+1]$ be the set of positions of inner-most caps in $D$, and 
$D_{i}$ the $n$-strand Temperley--Lieb diagram obtained from $D$ by removing the $i$-th inner-most cap.
The coefficient $\mathrm{coef}^{A}_{n}(D)$ of $D$ in $P_{n}$ satisfies the 
following recurrence relation:
\begin{align}
\label{eq:coefDA}
\mathrm{coef}^{A}_{n}(D)=\sum_{i\in I(D)}\genfrac{}{}{}{}{[i]}{[n+1]}\mathrm{coef}^{A}_{n-1}(D_i).
\end{align}
\end{prop}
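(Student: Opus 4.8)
The plan is to mirror the argument that established the recurrence for the coefficients $\mathrm{coef}(g_{n,i})$ and $\mathrm{coef}(h_{n,j})$ in Proposition \ref{prop:Q1}, but now reading it ``from the bottom'' of a fixed diagram $D$ rather than tracking a single reduced word. Recall that $P_n = P_{n-1}\bigl(\sum_{i=1}^{n+1}\tfrac{[i]}{[n+1]}g^A_{n,i}\bigr)$ by Morrison's Proposition. The key structural fact is that every $(n+1)$-strand Temperley--Lieb diagram $D$ decomposes \emph{uniquely} as $D = \tilde{D}\, g^A_{n,i}$, where $g^A_{n,i}$ is chosen so that $i$ is the position of some inner-most cap of $D$, and $\tilde{D}$ is an $n$-strand diagram (sitting on the first $n$ points, with the last strand passing straight through). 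Indeed, multiplying an $n$-strand diagram $\tilde D$ on the right by $g^A_{n,i} = E_nE_{n-1}\cdots E_i$ has the effect of inserting a new cap at position $i$ on the bottom and redirecting the strand that previously ended at the $i$-th bottom point up to the new right-most top point; this is precisely the diagram $D_i$ being recovered from $D$ by removing its $i$-th inner-most cap. So the first step is to make this ``removal of an inner-most cap'' bijection precise and to check that $D \mapsto D_i$ is exactly inverse to right-multiplication by $g^A_{n,i}$.

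Given that bijection, the second step is bookkeeping. Expand $P_n = P_{n-1}\bigl(\sum_{i=1}^{n+1}\tfrac{[i]}{[n+1]}g^A_{n,i}\bigr)$ and collect the coefficient of a fixed diagram $D$. Writing $P_{n-1} = \sum_{E} \mathrm{coef}^A_{n-1}(E)\,E$ over $n$-strand diagrams $E$, the product contributes a term $\tfrac{[i]}{[n+1]}\mathrm{coef}^A_{n-1}(E)\cdot (E\,g^A_{n,i})$ for each $E$ and each $i$. The diagram $D$ appears in $E\,g^A_{n,i}$ precisely when $i \in I(D)$ and $E = D_i$ (by the uniqueness of the decomposition from step one — here one also checks that $E\,g^A_{n,i}$ never produces a loop, since $g^A_{n,i}$ only adds a cap and never closes one, so there is no $-[2]$ factor to worry about). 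Summing these contributions gives exactly
\begin{align*}
\mathrm{coef}^A_n(D) = \sum_{i \in I(D)} \frac{[i]}{[n+1]}\,\mathrm{coef}^A_{n-1}(D_i),
\end{align*}
which is Eq. (\ref{eq:coefDA}).

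A cleaner alternative, which I would actually write up, avoids re-deriving Morrison's product formula and instead uses the defining properties of $P_n$ directly. Since $P_n = P_{n-1} + \tfrac{[n]}{[n+1]}P_{n-1}E_nP_{n-1}$ and (as in the proof of Proposition \ref{prop:Q1}) $E_n$ kills the ``$\mathcal{K}^\perp$'' part of $P_{n-1}$ while $E_n g^A_{n,i} = g^A_{n,i+1}$ for... — more precisely, one uses $E_n g^A_{n-1,i} = g^A_{n,i}$ together with the known coefficients $\tfrac{[i]}{[n]}$ of $g^A_{n-1,i}$ in $P_{n-1}$ — one gets $P_{n-1}E_nP_{n-1} = P_{n-1}\bigl(\sum_i \tfrac{[i]}{[n]}g^A_{n,i}\bigr)$. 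Then for a fixed $D$, comparing coefficients on both sides of the recurrence and using the decomposition $D = D_i\, g^A_{n,i}$ indexed by $i \in I(D)$ yields the claim, with the ``$+P_{n-1}$'' term accounting for the case where $D$ already has an inner-most cap in position $n+1$ (the straight-through strand), matching $g^A_{n,n+1}=1$.

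The main obstacle is the injectivity/surjectivity bookkeeping in the diagram decomposition: one must verify that for a given target diagram $D$, the pairs $(E,i)$ with $E\,g^A_{n,i}=D$ (no loop created) are exactly $\{(D_i, i) : i \in I(D)\}$, with no multiplicities and no spurious loop contributions, and that removing distinct inner-most caps yields distinct $(n)$-strand diagrams $D_i$. This is a finite, local check on how the zig-zag $E_nE_{n-1}\cdots E_i$ acts on strand endpoints, but it is the step where the combinatorial content really lives; everything else is linear algebra over $\mathbb{C}(q)$.
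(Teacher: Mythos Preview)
The paper does not give its own proof of this proposition: it is quoted verbatim from Morrison \cite{Mor17} and used as a black box. Your argument is correct and is essentially Morrison's own proof --- take the simplified recurrence $P_n = P_{n-1}\bigl(\sum_{i=1}^{n+1}\tfrac{[i]}{[n+1]}g^A_{n,i}\bigr)$ (also quoted from \cite{Mor17}, immediately above in the paper), fix a target diagram $D$, and identify the pairs $(E,i)$ with $(E\otimes|)\cdot g^A_{n,i}=D$ as exactly $\{(D_i,i):i\in I(D)\}$, with no loops created.

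Two comments. First, your opening sentence says $D$ ``decomposes \emph{uniquely}'' as $\tilde D\,g^A_{n,i}$; this is misleading, since uniqueness holds only after $i$ is fixed, and the whole point of the recurrence is that $i$ ranges over $I(D)$, which typically has several elements. You state it correctly in your final paragraph, so this is only a matter of exposition. Second, your ``cleaner alternative'' is not really an alternative: re-deriving the product form $P_n = P_{n-1}\sum_i\tfrac{[i]}{[n+1]}g^A_{n,i}$ from Wenzl's recurrence via the $\mathcal{K}/\mathcal{K}^\perp$ decomposition is precisely what Morrison's Proposition~3.3 already does (and what the paper's Proposition~\ref{prop:Q1} generalizes to type $D$). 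Once that formula is in hand, the diagram-level bookkeeping is the same either way, so you may as well just cite it and run your first argument.
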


In what follows, we will derive the recurrence relation via diagrams 
for $\mathrm{TL}^{D}_{n+1}$.
Let $D$ be an $n+1$-strand Tempereley--Lieb diagram possibly with dots.
For convenience, we simply write the coefficient $\mathrm{coef}_{n+1}(D)$ of $D$ in $Q_{n+1}$
as $D$.

First, we consider the case where $D$ contains an even number of dots.
We define two sets $I^{\bullet}(D)$ and $I^{\circ}(D)$.
We define the set $I^{\bullet}(D):=\{1\}$ if there exists a dotted 
inner-most cap at the position one and $I^{\bullet}(D)=\emptyset$ otherwise.
Similarly, the set $I^{\circ}(D)$ is defined as the set of the positions of inner-most caps
without a dot.
Then, we define $I(D):=I^{\bullet}(D)\cup I^{\circ}(D)$.
\begin{prop}
\label{prop:Deven}
Suppose that $D$ has a diagram consisting of $n+1$ strands with an even number of dots.
Let $I(D)$ be the set as above. We denote by $D_{i}$ the $n$-strand diagram obtained from 
$D$ by removing the $i$-th inner-most cap. Further, if $i=1\in I^{\bullet}(D)$, we add a dot on a cap or a vertical
strand which contains the left-most point at the bottom in $D_{i}$.
Then, we have 
\begin{align}
\label{eq:Dev}
D=\sum_{i\in I(D)}\mathrm{coef}(g_{n,i})D_{i}.
\end{align}
We can recursively apply Eq. (\ref{eq:Dev}) to the diagram $D_{i}$ since $D_{i}$ has 
an even number of dots.
\end{prop}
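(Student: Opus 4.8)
The plan is to reduce the statement to Proposition~\ref{prop:Q1} by peeling off the bottom layer of $D$ in two different ways and comparing coefficients, exactly as in the type~$A$ argument behind Proposition~\ref{prop:coefDA}. First I would record that, since $D$ has an even number of dots, its algebraic representation contains no occurrence of the product $E_0E_1$; thus any dot sits on an outer-most cap, outer-most cup, or left-most vertical strand by~(P2a). In particular a dotted inner-most cap can only occur at position~$1$ (it is then simultaneously inner-most and outer-most), which is precisely why the set $I^{\bullet}(D)$ is defined the way it is, and why $I(D)=I^{\bullet}(D)\cup I^{\circ}(D)$ exhausts all inner-most caps of $D$, dotted or not. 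The key structural input is that every $n+1$-strand diagram has at least one inner-most cap, so $I(D)\neq\emptyset$.

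Next I would extract the coefficient of $D$ from the recurrence~\eqref{eq:Q2} of Proposition~\ref{prop:Q1}, written with $n$ in place of $n+1$, i.e.\ $Q_{n}=Q_{n-1}\bigl(\sum_{i}\mathrm{coef}(g_{n,i})g_{n,i}+\sum_{j}\mathrm{coef}(h_{n,j})h_{n,j}\bigr)$. Multiplying an $n$-strand diagram $D'$ (a term appearing in $Q_{n-1}$, hence with an even number of dots) on the left by $g_{n,i}$ or $h_{n,j}$ produces an $(n+1)$-strand diagram: the bottom layer $g_{n,i}$ glues a new cap at position~$i$ onto the bottom of $D'$ and pushes the old diagram up, while $h_{n,j}$ does the same but also deposits a dot on the strand meeting the left-most bottom point (and $h_{n,0}$ additionally carries a second dot). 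A reading of the graphical descriptions of $g_{n,i}$ and $h_{n,j}$ given just before Figure~\ref{fig:gh} shows: (i) no loops are created, since $D'$ is reduced and has no dotted cup/cap configuration that would close a dotted loop; (ii) the resulting diagram $g_{n,i}D'$ (resp.\ $h_{n,j}D'$) has exactly one inner-most cap at the bottom, namely the one just attached, at position~$i$ (resp.\ $j$), and its removal gives back $D'$; (iii) the parity of the number of dots is preserved — $g_{n,i}$ adds none, $h_{n,j}$ for $j\geq1$ adds one which then merges with the already-present outer-most dot of $D'$ (an $h$-type or $E_0$-type term), and $h_{n,0}$'s two dots likewise merge appropriately, so $g_{n,i}D'$ and $h_{n,j}D'$ always have an even number of dots. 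Conversely, given $D$ with an even number of dots and an inner-most cap at position $i\in I(D)$, removing that cap yields a well-defined $n$-strand diagram $D_{i}$, with a dot relocated to the left-most bottom strand exactly when the removed cap was the dotted outer-most one, i.e.\ when $i=1\in I^{\bullet}(D)$; this is the inverse of the gluing. Hence each diagram $D$ arises, for each $i\in I(D)$, from exactly one term $g_{n,i}D_i$ (if the $i$-th inner-most cap is undotted) or $h_{n,i-1}D_i$ — wait, the indexing: an inner-most cap at position~$1$ that carries a dot is produced by $h_{n,0}$ acting on $D_1$ (which is of $h$- or $E_0$-type), contributing $\mathrm{coef}(h_{n,0})=\tfrac{[n][n+1]}{[2n][n+1]}=\tfrac{[n]}{[2n]}=\mathrm{coef}(g_{n,1})$; and an inner-most cap at position $i\geq1$ with no dot is produced by $g_{n,i}$ acting on $D_i$, contributing $\mathrm{coef}(g_{n,i})$. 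In both cases the coefficient is $\mathrm{coef}(g_{n,i})$, which is exactly the claimed weight in~\eqref{eq:Dev}.

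Assembling these contributions, $\mathrm{coef}_{n}(D)=\sum_{i\in I(D)}\mathrm{coef}(g_{n,i})\,\mathrm{coef}_{n-1}(D_i)$, which with the notational convention $D=\mathrm{coef}_{n+1}(D)$ (and the shift $n\mapsto n+1$) is precisely~\eqref{eq:Dev}. The final sentence of the proposition is immediate: each $D_i$ is obtained from $D$ by deleting a cap and possibly sliding a dot, operations that preserve the parity of the dot count, so $D_i$ again has an even number of dots and the recursion applies to it.

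The main obstacle I anticipate is the bookkeeping in the second paragraph: verifying carefully that the left-multiplication by $g_{n,i}$ or $h_{n,j}$ on an arbitrary term of $Q_{n-1}$ never produces a diagram outside the class of diagrams with an even number of dots (in particular never closes a dotted loop, which would signal an $E_0E_1$ and flip the parity), and that the correspondence between "$i$-th inner-most cap of $D$" and "the $g$ or $h$ factor that created it" is a genuine bijection with no double counting — the subtlety being entirely concentrated at position~$1$, where both $g_{n,1}$ (undotted cap) and $h_{n,0}$ (dotted cap) can act, and one must check these land on disjoint sets of diagrams distinguished precisely by $I^{\bullet}(D)$ versus $I^{\circ}(D)\ni1$. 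Once that dichotomy is pinned down, everything else is the same linear-algebra extraction of coefficients used by Morrison in the type~$A$ case.
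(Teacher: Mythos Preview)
Your overall strategy is the paper's: expand $Q_n = Q_{n-1}\bigl(\sum_i \mathrm{coef}(g_{n,i})g_{n,i} + \sum_j \mathrm{coef}(h_{n,j})h_{n,j}\bigr)$ and match factorizations $D = D'\cdot X$ against inner-most caps of $D$. The gap is in your handling of the terms $h_{n,j}$ with $j\ge 1$. You claim their single dot ``merges with the already-present outer-most dot of $D'$'' so that the product has an even number of dots. This fails twice. First, $D'$ ranges over \emph{all} basis elements appearing in $Q_{n-1}$, not just even-dot ones, so you cannot assume $D'$ carries such a dot (take $D'=\mathbf{1}$ or $D'=E_2$). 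Second --- and this is the point the paper actually uses --- each $h_{n,j}$ with $j\ge 1$ contains the product $E_0E_1$ in its reduced expression, and this feature is preserved under any further multiplication by a $g_{m,i'}$ or $h_{m,j'}$; hence $D'\cdot h_{n,j}$ is always a single-dot diagram by~(P2b). So these products have \emph{odd} dot parity and can never equal the even-dot $D$. That parity obstruction is what eliminates the $h_{n,j}$, $j\ge 1$, terms, leaving only $g_{n,i}$ and $h_{n,0}$; only \emph{after} this reduction do you learn that $D'$ (now $=D_i$) must have even dots, not before.

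Two smaller points. The product $D'\cdot g_{n,i}$ need not have ``exactly one inner-most cap'': any inner-most cap of $D'$ away from the gluing site survives (e.g.\ $E_1\cdot E_3$ has two). What is true, and what you correctly use a few lines later, is that the newly attached cap is \emph{an} inner-most cap at position $i$, and that the assignment between factorizations and inner-most caps of $D$ is a bijection. Also, the recurrence places the $g$'s and $h$'s to the \emph{right} of $Q_{n-1}$, so the products are $D'\cdot g_{n,i}$ rather than $g_{n,i}D'$; your geometric description (cap attached at the bottom) already matches the right-multiplication picture, so this is only a notational slip.
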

\begin{remark}
Four remarks are in order:
\begin{enumerate}
\item The diagram $D$ has an even number of dots. Similarly, a diagram $D_{i}$ also has 
an even number of dots. This is because we add a dot to a diagram after the removal of an inner-most cap with a dot 
at position one.  
\item Suppose we have two dots on a cap or a vertical strand after we add a dot on the cap or the vertical strand.
Then, we delete these two dots.
\item If an inner-most cap at position $i\neq1$ has a dot, this inner-most cap 
cannot be removed to produce a diagram of smaller size. 
This comes from the fact that we have no generator which has the same connectivity as $E_{i}$, $i\ge2$, 
and has a dotted cap and a dotted cup.
This is the reason why we have $I^{\bullet}(D)=\{1\}$ if there is a dotted cap at position one,
and $I^{\bullet}(D)=\emptyset$ otherwise.
\item Suppose that $1\in I(D)$. Then, we have an inner-most cap with or without a dot
at the position one. Since we have $\mathrm{coef}(h_{n,0})=\mathrm{coef}(g_{n,1})$ by 
definition, we do not need to distinguish these two coefficients.
\end{enumerate}
\end{remark}

\begin{proof}[Proof of Proposition \ref{prop:Deven}]
Recall we have the recurrence relation (\ref{eq:Q2}) in Proposition \ref{prop:Q1}.
Let $D$ be a diagram with $n+1$ strands and with an even number of dots.
This implies that the algebraic representation of $D$ does not contain the product
$E_0E_1$. Recall that the elements $h_{n,j}$ with $j\ge 1$ contain the product 
$E_0E_1$. If we multiply $h_{n,j}$ by any $g_{m,i'}$ or $h_{m,j'}$ with $m\le n-1$,
the result also contains the product of $E_0E_1$.
This fact and Eq. (\ref{eq:Q2}) imply that $D$ is expressed as a product $D'g_{n,i}$ or $D''h_{n,0}$ such that
$D'$ or $D''$ is a diagram of $n$ strands and $1\le i\le n+1$.
Here, $D'$ or $D''$ is a diagram with $n$ strands and when we consider the product with $g_{n,i}$ or $h_{n,0}$, we
add a vertical strand to the right of $D'$ or $D''$ such that $D'$ or $D''$ and the vertical strand form a diagram
with $n+1$ strands.
By the same argument as the case of type $A$ (see \cite{Mor17} for details), the connectivity of a diagram $D'$ is the same 
as that of $D_{i}$. 
In the case $D=D'g_{n,i}$, it is obvious that the position of dots in $D'$ is the same as $D_{i}$. From these,
we have $D'=D_{i}$.
If $D=D''h_{n,0}$, we have to add a dot on the cap or the vertical strand which contains the left-most bottom point
after deleting the left-most cap, since the graphical representation of $h_{n,0}$ has a vertical strand with a dot 
which contains the left-most top point. 
Since each cap or a vertical strand has at most one dot, if there exist two dots on a cap or a vertical strand, then 
we delete these two dots.
From these, we have $D''=D_{1}$ if $D=D''h_{n,0}$ and $1\in I^{\bullet}(D)$.
Finally, we have $\mathrm{coef}(g_{n,1})=\mathrm{coef}(h_{n,0})$.  
By combining these observations together, we have Eq. (\ref{eq:Dev}).
\end{proof}

\begin{example}
We consider the coefficient of $E_{1}E_{2}E_{3}E_{0}$ in $P_{3}$.
We calculate
\begin{align*}
\tikzpic{-0.5}{[scale=0.6]
\draw(0,0)..controls(0,1)and(1,1)..(1,0)(0,2)..controls(0,1)and(1,1)..(1,2);
\draw(0.5,0.72)node{$\bullet$};
\draw(2,0)..controls(2,1)and(3,1)..(3,0)(2,2)..controls(2,1)and(3,1)..(3,2);
\draw(2.5,1.25)node{$\bullet$};
}&=\genfrac{}{}{}{}{[3]}{[6]}\cdot 
\tikzpic{-0.5}{[scale=0.6]
\draw(0,0)..controls(0,1)and(1,1)..(1,0)(0,2)..controls(0,1)and(1,1)..(1,2);
\draw(2,0)--(2,2);
\draw(2,1)node{$\bullet$};
\draw(0.5,0.72)node{$\bullet$};
}
+
\genfrac{}{}{}{}{[3][4]}{[6][2]}\cdot 
\tikzpic{-0.5}{[scale=0.6]
\draw(0,0)..controls(0,1)and(1,1)..(1,0)(0,2)..controls(0,1)and(1,1)..(1,2);
\draw(2,0)--(2,2);
\draw(2,1)node{$\bullet$};
\draw(0.5,0.72)node{$\bullet$};
}, \\
&=
\left(\genfrac{}{}{}{}{[3]}{[6]}+\genfrac{}{}{}{}{[3][4]}{[6][2]}\right)\genfrac{}{}{}{}{[2]}{[4]}\cdot
\tikzpic{-0.5}{[scale=0.6]
\draw(0,0)..controls(0,1)and(1,1)..(1,0)(0,2)..controls(0,1)and(1,1)..(1,2);
},\\
&=\genfrac{}{}{}{}{[3]^2[2]}{[6][4]}\cdot \genfrac{}{}{}{}{1}{[2]}\cdot
\tikzpic{-0.5}{[scale=0.6]
\draw(0,0)--(0,2);
}, \\
&=\genfrac{}{}{}{}{[3]^2}{[6][4]}.
\end{align*}
Note that we are not allowed to remove the vertical strand with a dot at the 
position $3$ (in the first row).
In the calculation of the coefficient of $E_1E_2E_3E_0$, we have only diagrams which have an even number of dots.

Compare the result with Example \ref{ex:E0321}.
\end{example}

Secondly, we consider the case where a diagram $D$ contains a single dot, which implies that 
$D$ corresponds to an element containing the product $E_0E_1$ in its algebraic representation. 
The connectivity of the diagram $D$ is different from $\mathbf{1}$ by the property (P2c).
Let $I(D)\subseteq[1,n+1]$ be the set of positions of inner-most caps possibly with a dot.
If $n+1\in I(D)$, this means that a diagram has an inner-most cap as a vertical 
strand connecting the right-most marked points on the top and bottom. 
Given $i\in I(D)$, we denote by $D_{i}$ the diagram obtained from $D$ by deleting 
the inner-most cap at the position $i$.
The size of $D_{i}$ is one smaller than that of $D$.
We have two cases: 1) $D_{i}\neq\mathbf{1}$, and 2) $D_i=\mathbf{1}$.

Case 1). We define $\mathcal{D}^{e}_{i}$ as the set of diagrams $D$ such that $D$ has the 
same connectivity of caps, cups and vertical strands as $D_{i}$, and has 
an even number of dots. The positions of dots in the diagram $D'\in\mathcal{D}^{e}_{i}$ 
satisfy the property (P2a).
Similarly, we denote by $D_{i}^{\mathrm{odd}}$ the unique diagram which has the same connectivity 
of caps, cups and vertical strands as $D_{i}$, and has a unique dot satisfying 
the property (P2b).

Case 2). Since $D_i=\mathbf{1}$, we define $\mathcal{D}^{e}_{i}:=\{\mathbf{1}\}$ and 
$D_{i}^{\mathrm{odd}}:=\emptyset$ by the property (P2c).

\begin{prop}
\label{prop:Dodd}
Let $D$ be a diagram with a unique dot, 
and $I(D)$, $\mathcal{D}^{e}_{i}$, and $D_{i}^{\mathrm{odd}}$ be as above.
Then, we have 
\begin{align}
\label{eq:Dodd}
D=\sum_{i\in I(D)}\mathrm{coef}(h_{n,i})\left(\sum_{D'\in\mathcal{D}^{e}_{i}}D'-[2]D_{i}^{\mathrm{odd}}\right)
+\sum_{i\in I(D)}2^{\delta(i\equiv1)}\mathrm{coef}(g_{n,i})D_{i}^{\mathrm{odd}},
\end{align}
where $\delta(P)$ is the Kronecker delta function, namely, $\delta(P)=1$ if $P$ is true and $\delta(P)=0$
otherwise.
\end{prop}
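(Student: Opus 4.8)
The plan is to extract the coefficient of the fixed unique-dot diagram $D$ from both sides of the recurrence (\ref{eq:Q2}) with $n$ replaced by $n-1$, in the same spirit as the proof of Proposition \ref{prop:Deven}; the new feature is that, because the algebraic representation of $D$ contains the product $E_0E_1$, the $h$-blocks $h_{n,j}$ with $j\ge 1$ (which themselves contain $E_0E_1$) now contribute. Writing $Q_{n-1}=\sum_{E}\mathrm{coef}_{n-1}(E)\,E$ and multiplying by $\sum_{i=1}^{n+1}\mathrm{coef}(g_{n,i})g_{n,i}+\sum_{j=0}^{n}\mathrm{coef}(h_{n,j})h_{n,j}$ (an $n$-strand diagram $E$ being first completed by a vertical strand on the right), the coefficient $\mathrm{coef}_{n}(D)$ is the sum over all pairs $(E,X)$, with $X$ a $g$- or $h$-block and $E\cdot X=c\,D$ for a scalar $c$, of $c\,\mathrm{coef}_{n-1}(E)\,\mathrm{coef}(X)$. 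The connectivity bookkeeping is identical to type $A$ (Proposition \ref{prop:coefDA}): it forces, for each $i\in I(D)$, the connectivity of $E$ to be that of $D_i$ and $X$ to be $g_{n,i}$ or $h_{n,i}$; the value $i=n+1$ is harmless because $\mathrm{coef}(h_{n,n+1})=0$ by (\ref{eq:coefgh}), and the degenerate case $D_i=\mathbf 1$ is covered by the conventions $\mathcal D^e_i=\{\mathbf 1\}$ and $D_i^{\mathrm{odd}}=\emptyset$.

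It then remains to determine, for each admissible pair, the number of dots of $E$ and the scalar $c$; I would split this into three cases. First, $X=g_{n,i}$ with $i\ge 2$: since $g_{n,i}$ involves neither $E_0$ nor $E_1$, the product keeps exactly the dots of $E$ and creates no loop, so $E$ must carry the unique dot, i.e.\ $E=D_i^{\mathrm{odd}}$ and $c=1$. The position $i=1$ is special: $g_{n,1}$ does contain $E_1$, and $h_{n,0}$ has the same connectivity as $g_{n,1}$; a short computation with $E_1E_2E_1=E_1$ and $E_0E_2E_0=E_0$ shows that each of $g_{n,1}$ and $h_{n,0}$ pairs $D_1^{\mathrm{odd}}$ with $D$ and has $c=1$, which, together with $\mathrm{coef}(g_{n,1})=\mathrm{coef}(h_{n,0})$, accounts for the factor $2^{\delta(i\equiv1)}$. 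Second, $X=h_{n,i}$ with $i\ge1$ and $E$ has an even number of dots: the single $E_0E_1$ supplied by $h_{n,i}$ becomes the unique dot of the product, no dotless loop is formed, $c=1$, and $E$ ranges over all of $\mathcal D^e_i$ (when $E$ itself contains a dotted $E_0$, the two copies of $E_0$ are separated, after commutation, by a reducible word in $E_2,\dots,E_n$ and collapse via $E_0E_2E_0=E_0$ without introducing a sign). This yields the term $\mathrm{coef}(h_{n,i})\sum_{D'\in\mathcal D^e_i}D'$. Third, $X=h_{n,i}$ with $i\ge1$ and $E$ carries a unique dot: then $E\cdot h_{n,i}$ contains two copies of $E_0E_1$; commuting them together and applying $E_0E_2E_0=E_0$ and then $E_1^2=-[2]E_1$ leaves a single $E_0E_1$ and the scalar $c=-[2]$, with $E=D_i^{\mathrm{odd}}$, giving $-[2]\,\mathrm{coef}(h_{n,i})\,D_i^{\mathrm{odd}}$. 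Summing over $i\in I(D)$ gives (\ref{eq:Dodd}).

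The main obstacle is precisely this scalar-and-dot bookkeeping in the $h$-block cases. One has to verify that the defining relations always reduce the double occurrence of $E_0E_1$ (resp.\ of $E_0$) exactly to the form predicted above — in particular that in the unique-dot subcase the loop deleted without a weight is the \emph{dotted} one, so that the only weight produced is the $-[2]$ coming from a single dotless loop, and that the surviving diagram is again the unique-dot diagram $D_i^{\mathrm{odd}}$ in the position prescribed by (P2b). One also needs that no pair $(E,X)$ beyond those listed contributes, which follows from the fact that a reduced expression containing $E_0E_1$ contains no further $E_0$ or $E_1$.
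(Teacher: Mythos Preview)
Your proposal is correct and follows essentially the same route as the paper: both proofs read off the coefficient of $D$ from the recurrence (\ref{eq:Q2}) by classifying the possible factorizations $E\cdot X$ according to whether $X$ is a $g$-block or an $h$-block and, in the latter case, whether $E$ carries an even number of dots or a unique dot. Your treatment is slightly more explicit than the paper's about the scalar $c$ and the loop bookkeeping (the paper simply refers to Remark \ref{remark:Dodd} for the $-[2]$ and the factor $2$), but the structure and the key observations are the same.
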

	
\begin{remark}
\label{remark:Dodd}
Two remarks are in order:
\begin{enumerate}
\item The factor $2^{\delta(i\equiv1)}$ comes from the fact that we have $\mathrm{coef}(h_{n+1,0})=\mathrm{coef}(g_{n+1,1})$,
and $I(D)\subseteq[1,n+1]$.
The set $I(D)$ of the positive integers cannot detect the generator $h_{n+1,0}$ since $0\notin I(D)$.
\item The factor $-[2]$ in front of $D_{i}^{\mathrm{odd}}$ comes from the fact that if we multiply $h_{n,i}$
with $h_{m,j}$ such that $i,j\neq0$, then this product gives $-[2]h'$ with some $h'\in\mathrm{TL}^{D}_{n}$.
In other words, both $h_{n,i}$ and $h_{m,j}$ contain the product $E_0E_1$, and we use
$E_{1}^2=-[2]E_1$ or $E_0^2=-[2]E_0$ in the calculation of the product.
\end{enumerate}
\end{remark}

\begin{proof}[Proof of Proposition \ref{prop:Dodd}]
We rewrite the recurrence relation (\ref{eq:Q2}) in Proposition \ref{prop:Q1} 
in terms of diagrams.
Recall that the diagram $D$ has a unique single dot which implies that the element 
corresponding to $D$ contains the product $E_0E_1$.

If we have $D=D'g_{n,i}$ or $D=D''h_{n,0}$ where $D'$ or $D''$ is an element in $Q_{n-1}$,  $D'$ or $D''$ has to contain 
the product $E_{0}E_{1}$ since $g_{n,i}$ or $h_{n,0}$ does not contain 
the product $E_{0}E_{1}$.
Further we have $\mathrm{coef}(g_{n,1})=\mathrm{coef}(h_{n,0})$.
This gives the factor $2^{\delta(i\equiv1)}$.
By the same argument as the proof of Proposition \ref{prop:Deven},
$D'$ or $D''$ has the same connectivity as $D_{i}^{\mathrm{odd}}$. 
The number of dots in $D'$ or $D''$ is one since it contains $E_{0}E_{1}$.
Then, the diagram $D'$ or $D''$ satisfies the property (P2b).
From these, we have the second sum in the right hand side of Eq. (\ref{eq:Dodd}).

Suppose that we have $D=D'h_{n,i}$ with $1\le i\le n$ where $D'$ is an element in $Q_{n-1}$.
Then, since $h_{n,i}$ contains the product $E_{0}E_{1}$, 
we have two cases for $D'$: a) the diagram $D'$ has an even number of dots, and 
b) $D'$ has a unique single dot.
\begin{enumerate}[(a)]
\item Again, by the same argument as the proof of Proposition \ref{prop:Deven}, 
the connectivity of $D'$ is the same as diagrams in $\mathcal{D}^{e}_{i}$.
Therefore, it is enough to show that a diagram $D'\in\mathcal{D}^{i}_{e}$ satisfies 
$D=D'h_{n,i}$. 
However, the facts that $D$ and $h_{n,i}$ contain a single dot, and $D'$ contains 
an even number of dots, imply that if $D'\in\mathcal{D}^{i}_{e}$, then $D=D'h_{n,i}$.
From this observation, we have the term $\sum_{i\in I(D)}\mathrm{coef}(h_{n,i})\sum_{D'\in\mathcal{D}^{e}_{i}}D'$.
\item Since $D'$ has a unique single dot, the diagram is uniquely fixed if the connectivity 
of caps, cups and vertical strands is fixed. The condition that $D=D'h_{n,i}$ implies that
the connectivity of $D'$ is the same as that of $D_{i}^{\mathrm{odd}}$. Then, it is easy to see 
that $D'=D_{i}^{\mathrm{odd}}$ since $D'$ contains a unique dot.
The element corresponding to $D_{i}^{\mathrm{odd}}$ contains the product $E_{0}E_{1}$.
By the second remark in Remark \ref{remark:Dodd}, $D_{i}^{\mathrm{odd}}h_{n,i}$ yields 
the factor $-[2]$.
By combining these observations, we have the term 
$-[2]\sum_{i\in I(D)}\mathrm{coef}(h_{n,i})D_{i}^{\mathrm{odd}}$.
\end{enumerate}
From the two cases (a) and (b), we have the first sum in the right hand side of Eq. (\ref{eq:Dodd}).
This completes the proof.
\end{proof}

\subsection{Relation between types \texorpdfstring{$A$}{A} and \texorpdfstring{$D$}{D}}
\label{sec:relAD}
In this section, we will show that a linear combination of the diagrams in $\mathrm{TL}^{D}_{n+1}$
satisfies the recurrence relation of a diagram in $\mathrm{TL}^{A}_{n+1}$.
This linear combination of the diagrams of type $D$ plays a central role when we 
interpret the recursive relation (\ref{eq:Dodd}) in terms of the Dyck tilings.

Let $D$ be an $n+1$-strand Temperley--Lieb diagram for $\mathrm{TL}^{D}_{n+1}$.
The diagram may have dots.
We denote by $D^{A}$ the diagram obtained from $D$ by deleting all dots.
In other words, we focus on the only connectivity of $D$ and ignore the dots.
This operation naturally defines the forgetful map $\mathtt{For}:D\mapsto D^{A}$.
Let $\mathcal{D}^{e}(D^A)$ be the set of diagrams $D$ such that $\mathtt{For}(D)=D^{A}$
and they have an even number of dots satisfying (P2a).
We have a unique diagram which has an odd number of dots and has the same connectivity as $D^{A}$ 
if $D^{A}\neq\mathbf{1}$. 
Therefore, we denote by $D^{\mathrm{odd}}$ this unique diagram. As a consequence, the diagram $D^{\mathrm{odd}}$ 
has a single dot, and it satisfies (P2b).
Here, the superscript $A$ in $D^A$ stands for type $A$, and the superscript $\mathrm{odd}$ in $D^{\mathrm{odd}}$
stands for a diagram of type $D$ with a single dot.
\begin{defn}
Let $D^{A}$, $\mathcal{D}^{e}(D^A)$ and $D^{\mathrm{odd}}$ be the diagram given as above.
We define the following linear combination:
\begin{align}
\label{eq:defDAD}
D^{A}:=\sum_{D\in\mathcal{D}^{e}(D^A)}D-[2]D^{\mathrm{odd}}.
\end{align}
\end{defn}

The linear combination $D^{A}$ in Eq. (\ref{eq:defDAD}) can be regarded as 
an element of Temperely--Lieb algebra of type $A$ by the next proposition.
\begin{prop}
\label{prop:DAD}
The linear combination $D^{A}$ satisfies the recurrence relation (\ref{eq:coefDA}) of type $A$
in Proposition \ref{prop:coefDA}.
\end{prop}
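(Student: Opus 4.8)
The goal is to show that the linear combination $D^{A} := \sum_{D\in\mathcal{D}^{e}(D^A)}D-[2]D^{\mathrm{odd}}$, when expanded in the basis of diagrams, has a coefficient $\mathrm{coef}^{A}_{n}(D^A)$ in $Q_n$ (summing the type-$D$ coefficients over the fiber of $\mathtt{For}$, with the appropriate $-[2]$ weight) which satisfies the type-$A$ recurrence of Proposition \ref{prop:coefDA}. The natural strategy is to take the two type-$D$ recurrences we already have — Proposition \ref{prop:Deven} for diagrams with an even number of dots and Proposition \ref{prop:Dodd} for diagrams with a single dot — and add them together with the weights dictated by \eqref{eq:defDAD}. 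Concretely, I would fix a type-$A$ diagram $D^A$, apply \eqref{eq:Dev} to each $D\in\mathcal{D}^{e}(D^A)$, apply \eqref{eq:Dodd} to $D^{\mathrm{odd}}$, multiply the latter by $-[2]$, and sum. The claim is that the right-hand side reorganizes into $\sum_{i\in I(D^A)}\frac{[i]}{[n+1]}(D_i)^{A}$, where $(D_i)^A$ is the linear combination \eqref{eq:defDAD} built from the smaller type-$A$ diagram $D_i^A$ obtained by deleting the $i$-th inner-most cap.

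**Key steps, in order.** First I would establish the bookkeeping of inner-most caps: for a type-$A$ diagram $D^A$, the set $I(D^A)$ of inner-most cap positions is the same for every diagram in the fiber $\mathtt{For}^{-1}(D^A)$ except that a dot at position $1$ restricts $I^{\bullet}$ as in the discussion before Proposition \ref{prop:Deven}; I need to check the fibers over $D^A$ and over each $D_i^A$ match up under cap removal, using that removing an inner-most cap commutes with the forgetful map. Second, I would collect the coefficient of a fixed smaller diagram $D'$ (with its dots) appearing on the right-hand side after the substitution. From \eqref{eq:Dev} it contributes $\mathrm{coef}(g_{n,i})$ for each even-dotted $D$ hitting it; from \eqref{eq:Dodd} applied to $D^{\mathrm{odd}}$ and scaled by $-[2]$, the even-dotted diagrams $D'\in\mathcal{D}^e_i$ pick up $-[2]\,\mathrm{coef}(h_{n,i})$, the odd diagram $D_i^{\mathrm{odd}}$ picks up $-[2]\big(-[2]\,\mathrm{coef}(h_{n,i}) + 2^{\delta(i\equiv1)}\mathrm{coef}(g_{n,i})\big)$. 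Third — the crux — I would verify that these combine so that the coefficient of the \emph{linear combination} $(D_i)^A$ is exactly $\frac{[i]}{[n+1]}$, i.e. $\mathrm{coef}(g_{n,i}) - [2]\,\mathrm{coef}(h_{n,i})$ on the even-dotted part equals $\frac{[i]}{[n+1]}$, and simultaneously the coefficient in front of $D_i^{\mathrm{odd}}$ equals $-[2]\cdot\frac{[i]}{[n+1]}$ so that the $-[2]D_i^{\mathrm{odd}}$ bundling of \eqref{eq:defDAD} is reproduced. This is a $q$-integer identity: using \eqref{eq:coefgh}, $\mathrm{coef}(g_{n+1,i}) - [2]\mathrm{coef}(h_{n+1,i}) = \frac{[n+1][2i-2]}{[2n+2][i-1]} - [2]\frac{[n+1][n+2-i]}{[2n+2][n+2]}$, and I would show this telescopes to $\frac{[i]}{[n+2]}$ via the standard identities $[a][b]-[c][d]=[a-c][\cdots]$-type manipulations (e.g. $[2i-2] = [2][i-1]\cdot\frac{\cdots}{\cdots}$ is not literally true, so I will instead use $[m][n] = \sum$-free product-to-difference formulas like $[a+b][a-b]=[a]^2-[b]^2$). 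I should also separately handle the $i=1$ case where $\mathrm{coef}(h_{n+1,0})=\mathrm{coef}(g_{n+1,1})$ and the factor $2^{\delta(i\equiv1)}$ appears, and the degenerate Case 2 where $D_i=\mathbf{1}$ so $D_i^{\mathrm{odd}}=\emptyset$ and $\mathcal{D}^e_i=\{\mathbf 1\}$, checking the formula still gives the coefficient $\frac{[n+1]}{[n+2]}$ of $\mathbf 1$ (which matches $g_{n+1,n+2}$ with position $n+2$... careful with the index shift $n\to n+1$).

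**Expected main obstacle.** The genuinely delicate part is the third step: proving the $q$-integer identity $\mathrm{coef}(g_{n,i}) - [2]\,\mathrm{coef}(h_{n,i}) = \tfrac{[i]}{[n+1]}$ (with the right rank convention) \emph{and} the companion identity controlling the $D^{\mathrm{odd}}$ coefficient, because the two pieces of \eqref{eq:Dodd} — the one carrying $-[2]D_i^{\mathrm{odd}}$ inside the $\mathrm{coef}(h_{n,i})$ term and the one carrying $2^{\delta(i\equiv1)}\mathrm{coef}(g_{n,i})D_i^{\mathrm{odd}}$ — must, after multiplication by the global $-[2]$ from \eqref{eq:defDAD} and after absorbing one more $-[2]$ from the $h\cdot h$ product, line up with the $-[2]$ that \eqref{eq:defDAD} attaches to $(D_i)^{\mathrm{odd}}$. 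I anticipate that this forces the single clean identity $\mathrm{coef}(g_{n,i}) - [2]\mathrm{coef}(h_{n,i}) = \tfrac{[i]}{[n+1]}$ to do all the work, with the $-[2]^2 = [2]^2$ factors on the odd side conspiring with $2^{\delta(i\equiv1)}$ only at $i=1$; verifying the $i=1$ edge case and the $D_i=\mathbf 1$ edge case consistently is where sign and index errors are most likely. Everything else (the fiber-matching combinatorics and the reduction to Proposition \ref{prop:coefDA}) is bookkeeping once that identity is in hand.
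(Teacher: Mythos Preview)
Your proposal is correct and follows essentially the same route as the paper: substitute Propositions~\ref{prop:Deven} and~\ref{prop:Dodd} into the defining linear combination \eqref{eq:defDAD}, regroup by inner-most cap position, and reduce everything to the single $q$-identity $\mathrm{coef}(g_{n,i})-[2]\,\mathrm{coef}(h_{n,i})=\tfrac{[i]}{[n+1]}$, with separate treatment of the boundary positions $i=1$ and $i=n+1$ and of the degenerate case $D_i=\mathbf{1}$. One detail worth flagging: at $i=1$ the even-dotted side also acquires a factor of $2$ (each $D'\in\mathcal{D}^{e}(D^{A}_{1})$ is the image of exactly two diagrams in $\mathcal{D}^{e}(D^{A})$, one with and one without a dot on the removed cap), so the relevant identity there is $2\,\mathrm{coef}(g_{n,1})-[2]\,\mathrm{coef}(h_{n,1})=\tfrac{1}{[n+1]}$; this is implicit in your ``fiber-matching'' step but is the one place where the bookkeeping is not entirely automatic.
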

\begin{proof}
If $D^{A}=\mathbf{1}$, we have $\mathcal{D}^{e}(D^{A})=\{\mathbf{1}\}$ and $D^{\mathrm{odd}}=\emptyset$.
In this case, the claim is trivial.
For $D^{A}\neq\mathbf{1}$, it is enough to show that the linear combination (\ref{eq:defDAD}) 
satisfies the recurrence relation (\ref{eq:coefDA}) of type $A$ by use of
the recurrence relations (\ref{eq:Dev}) and (\ref{eq:Dodd}).  
We compute the coefficient of $D^{A}_{i}$, $i\in I(D^{A})$, where $D^{A}_{i}$ 
is the diagram of type $A$ obtained from $D^{A}$ by removing the inner-most
cap at position $i$. 
We consider the three cases: the first one is $i=n+1$, the second one is $1\le i\le n$,
and the third one is $i=1$.

Firstly, suppose that $n+1\in I(D^A)$.
The diagram $D^A$ has a vertical strand which connects the two right-most 
marked points on the top and bottom. This vertical strand is inner-most by definition.
We calculate the contributions of the right hand side of Eq. (\ref{eq:defDAD})
to $D^{A}_{n+1}$.
When $D\in\mathcal{D}^{e}(D^{A})$, and the $n+1$-st inner-most cap does not 
have a dot, we have a coefficient $\mathrm{coef}(g_{n+1,n+1})=1$. 
Further, the new diagram (obtained from $D$ by removing the right-most vertical
strand) contains an even number of dots. 
We have all diagrams with an even number of dots and with the same connectivity as $D^{A}_{n+1}$.
If $D$ has a dot at the $n+1$-st inner-most
cap, this does not contribute to $D^{A}_{n+1}$ since we have a recurrence relation
(\ref{eq:Dev}).
We apply Eq. (\ref{eq:Dodd}) to the diagram $D^{\mathrm{odd}}$.
Note that the coefficient of $h_{n,n+1}$ is equal to zero since it is not defined. 
For the second term, we have $\mathrm{coef}(g_{n,n+1})D^{\mathrm{odd}}_{n+1}$.
By combining these with Eq. (\ref{eq:defDAD}), 
we have the linear combination of the diagrams for $D^{A}_{n+1}$ with the coefficient $1$.

Secondly, suppose that $1<i\le n$ and $i\in I(D^{A})$.
We apply the recurrence relations (\ref{eq:Dev}) and (\ref{eq:Dodd}) to Eq. (\ref{eq:defDAD}).
Explicitly, we have
\begin{align*}
&\mathrm{coef}(g_{n,i})\sum_{D\in \mathcal{D}^{e}(D^{A}_{i})}D
-[2]\left(\mathrm{coef}(h_{n,i})\sum_{D\in \mathcal{D}^{e}(D^{A}_{i})}D-[2]\mathrm{coef}(h_{n,i})D^{\mathrm{odd}}_{i}
+\mathrm{coef}(g_{n,i})D^{\mathrm{odd}}_{i}\right), \\
&\qquad=\genfrac{}{}{}{}{[i]}{[n+1]}\left(\sum_{D\in \mathcal{D}^{e}(D^{A}_{i})}D-[2]D^{\mathrm{odd}}_{i}\right), \\
&\qquad=\genfrac{}{}{}{}{[i]}{[n+1]}D^{A}_{i},
\end{align*}
where $D\in\mathcal{D}^{e}(D^{A}_{i})$ (resp. $D^{\mathrm{odd}}_{i}$) are the diagrams of even (resp. odd) number of dots 
such that $\mathtt{For}(D)=D^{A}_{i}$ (resp. $\mathtt{For}(D^{\mathrm{odd}}_{i})=D^{A}_{i}$).

Finally, suppose that $i=1\in I(D^{A})$.
By applying the recurrence relations (\ref{eq:Dev}) and (\ref{eq:Dodd}) to Eq. (\ref{eq:defDAD}),
we obtain 
\begin{align}
\label{eq:DA2}
\begin{aligned}
&2\cdot \mathrm{coef}(g_{n,1})\sum_{D\in \mathcal{D}^{e}}D
-[2]\left(\mathrm{coef}(h_{n,1})\sum_{D\in \mathcal{D}^{e}}D-[2]\mathrm{coef}(h_{n,1})D^{\mathrm{odd}}_{1}
+2\cdot\mathrm{coef}(g_{n,1})D^{\mathrm{odd}}_{1}\right), \\
&\qquad=\genfrac{}{}{}{}{1}{[n+1]}\left(\sum_{D\in \mathcal{D}^{e}}D-[2]D^{\mathrm{odd}}_{1}\right), \\
&\qquad=\genfrac{}{}{}{}{1}{[n+1]}D^{A}_{1}.
\end{aligned}
\end{align}
where $\mathcal{D}^{e}:=\mathcal{D}^{e}(D^{A}_{1})$.
The first factor $2$ in Eq. (\ref{eq:DA2}) comes from the fact that we have two generators 
$E_{1}E_2E_{0}$ and $E_{0}E_2E_{1}$, whose diagrams are the same as $E_1$ and $E_0$ 
if we ignore the dots. 

Form these, the linear combination (\ref{eq:defDAD}) satisfies the recurrence relation (\ref{eq:coefDA}).
\end{proof}

\begin{example}
Let $D^A$ be a diagram without dots:
\begin{align*}
D^{A}=
\tikzpic{-0.5}{[xscale=0.5,yscale=0.5]
\draw(0,0)..controls(0,1)and(1,1)..(1,0);
\draw(0,2)..controls(0,1)and(1,1)..(1,2);
\draw(2,0)--(2,2);
}.
\end{align*}
Then, we consider the following linear combination in $\mathrm{TL}^{D}_{3}$:  
\begin{align}
\label{eq:E0E1}
D^{A}=E_{1}+E_{0}+E_{1}E_{2}E_{0}+E_{0}E_{2}E_{1}-[2]E_0E_1.
\end{align}
Here, we make use of the algebraic representations of the diagrams.
By a simple calculation using Eq. (\ref{eq:coefDA}), we have 
\begin{align*}
\mathrm{coef}^{A}(D^A)=\genfrac{}{}{}{}{[2]}{[3]},
\end{align*}
where $\mathrm{coef}^{A}(D)$ is the coefficient of the diagram $D$ in the type $A$ case.
By a similar calculation, the right hand side of Eq. (\ref{eq:E0E1}) gives 
\begin{align*}
\genfrac{}{}{}{}{[3]}{[4]}+\genfrac{}{}{}{}{[3]}{[4]}
+\genfrac{}{}{}{}{1}{[4]}+\genfrac{}{}{}{}{1}{[4]}
-[2]\genfrac{}{}{}{}{[2]^3}{[4][3]}=\genfrac{}{}{}{}{[2]}{[3]}.
\end{align*}
\end{example}

\subsection{Recurrence relation via diagrams with a unique dot}
By combining the result in Section \ref{sec:rrd} with that 
in Section \ref{sec:relAD},  we rewrite the recurrence relation 
(\ref{eq:Dodd}) in terms of the diagrams of types $A$ and $D$.

Let $D$ be the diagram with a unique dot.
Let $D_{i}$ be the diagram obtained from $D$ by removing an 
inner-most cap at position $i$.
We define $D^{A}_{i}:=\mathtt{For}(D_{i})$ where $\mathtt{For}$ is the forgetful map.
Similarly, $D_{i}^{\mathrm{odd}}$ is a diagram with a unique dot such that 
$\mathtt{For}(D_{i}^{\mathrm{odd}})=D_{i}$.

The following is a direct consequence of Propositions \ref{prop:Dodd} and \ref{prop:DAD}.

\begin{prop}
\label{prop:DinDAD}
Let $D$, $D^{A}_{i}$ and $D_{i}^{\mathrm{odd}}$ be diagrams as above.
Then, we have
\begin{align}
\label{eq:DinDAD}
D=\sum_{i\in I(D)}\mathrm{coef}(h_{n,i})D^{A}_{i}
+\sum_{i\in I(D)}2^{\delta(i\equiv1)}\mathrm{coef}(g_{n,i})D_{i}^{\mathrm{odd}}.
\end{align}
\end{prop}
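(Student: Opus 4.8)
The plan is to combine the two recurrence relations we already have at our disposal: the diagrammatic recurrence (\ref{eq:Dodd}) in Proposition \ref{prop:Dodd}, which expresses $D$ (a diagram with a unique dot) as a signed sum involving the sets $\mathcal{D}^{e}_{i}$ and the diagrams $D_{i}^{\mathrm{odd}}$, together with the definition (\ref{eq:defDAD}) of the type-$A$ combination $D^{A}_{i}$ and its behavior established in Proposition \ref{prop:DAD}. The key observation is that the first sum on the right-hand side of (\ref{eq:Dodd}) is almost exactly a sum of the combinations $D^{A}_{i}$, except for a subtlety about which diagram plays the role of the "odd" representative.

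First I would start from (\ref{eq:Dodd}), namely
\begin{align*}
D=\sum_{i\in I(D)}\mathrm{coef}(h_{n,i})\left(\sum_{D'\in\mathcal{D}^{e}_{i}}D'-[2]D_{i}^{\mathrm{odd}}\right)
+\sum_{i\in I(D)}2^{\delta(i\equiv1)}\mathrm{coef}(g_{n,i})D_{i}^{\mathrm{odd}},
\end{align*}
and observe that, case by case on whether $D_i=\mathbf{1}$ or $D_i\neq\mathbf{1}$, the parenthesized expression $\sum_{D'\in\mathcal{D}^{e}_{i}}D'-[2]D_{i}^{\mathrm{odd}}$ is precisely the definition of $D^{A}_{i}$ in (\ref{eq:defDAD}), where $D^{A}_{i}=\mathtt{For}(D_i)$. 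Indeed $\mathcal{D}^{e}_{i}=\mathcal{D}^{e}(D^A_i)$ by the definitions given just before Proposition \ref{prop:Dodd} and in Section \ref{sec:relAD}, and the two notations $D_{i}^{\mathrm{odd}}$ agree. In the degenerate case $D_i=\mathbf{1}$ both sides reduce to $\mathbf{1}$ (with $D_i^{\mathrm{odd}}=\emptyset$), so nothing changes. This immediately rewrites the first sum as $\sum_{i\in I(D)}\mathrm{coef}(h_{n,i})D^{A}_{i}$, and the second sum is left untouched, yielding (\ref{eq:DinDAD}).

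The one point that needs care — and the place I expect the main (if modest) obstacle to be — is checking that the coefficients $\mathrm{coef}(h_{n,i})$ attached to each $D^{A}_i$ are the right ones, i.e.\ that this is genuinely consistent with the type-$A$ recurrence (\ref{eq:coefDA}) applied to the combination $D^A$ itself; but this consistency is exactly the content of Proposition \ref{prop:DAD}, whose proof already carried out the three-case analysis ($i=n+1$, $1<i\le n$, $i=1$) and verified that $\sum_i \mathrm{coef}(h_{n,i})D^A_i$ reproduces the type-$A$ coefficients $[i]/[n+1]$ for $D^A$. So strictly speaking the proposition is just the bookkeeping statement that the substitution of (\ref{eq:defDAD}) into (\ref{eq:Dodd}) collapses the first sum. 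I would therefore write the proof as a short two-step argument: (1) invoke the definition (\ref{eq:defDAD}) together with the identifications $\mathcal{D}^{e}_{i}=\mathcal{D}^{e}(D^A_i)$ to replace each inner sum $\sum_{D'\in\mathcal{D}^{e}_i}D'-[2]D_i^{\mathrm{odd}}$ by $D^A_i$; (2) note that the factor $2^{\delta(i\equiv1)}$ in the $g_{n,i}$-sum and the index range $I(D)\subseteq[1,n+1]$ are unchanged, so the second sum is copied verbatim. That gives (\ref{eq:DinDAD}) directly from Propositions \ref{prop:Dodd} and \ref{prop:DAD}.
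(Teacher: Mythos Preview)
Your proposal is correct and matches the paper's own treatment exactly: the paper gives no separate proof but simply states that the proposition is ``a direct consequence of Propositions \ref{prop:Dodd} and \ref{prop:DAD}'', which is precisely the two-step substitution you describe. Your identification $\mathcal{D}^{e}_{i}=\mathcal{D}^{e}(D^{A}_{i})$ and the matching of the two uses of $D_{i}^{\mathrm{odd}}$ are the only things to check, and you handle the degenerate case $D_i=\mathbf{1}$ correctly; the role of Proposition \ref{prop:DAD} is just to license reading $D^{A}_{i}$ as a genuine type-$A$ coefficient, making the resulting recurrence sign-free.
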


Note that the recurrence relation (\ref{eq:Dodd}) is not minus sign free, 
{\it i.e.}, we have a term with minus sign $-[2]D_{i}^{\mathrm{odd}}$.
However, the recurrence relation (\ref{eq:DinDAD}) is a minus sign free
because of Proposition \ref{prop:DAD}.
The coefficient of $D$ in $Q_{n}$ is expressed as the linear combination 
of $D^{A}_{i}$ and $D_{i}^{\mathrm{odd}}$ with positive coefficients.
However, the recurrence relation contains  the diagrams not only of type $D$ 
but also of type $A$.
This form plays a central role when we interpret the recurrence relation 
(\ref{eq:DinDAD}) in terms of the Dyck tilings in the next section.

\section{Dyck tilings}
\label{sec:DT}
\subsection{Definition}
We briefly introduce the notion of cover-inclusive Dyck tilings 
following \cite{KMPW12,SZJ12}.

A {\it Dyck path} of size $n$ is a lattice path from $(0,0)$ to $(2n,0)$ 
such that it consists of up steps $(1,1)$ and down steps $(1,-1)$ and 
it never goes below the horizontal line $y=0$.
By its definition, a Dyck path has $n$ up steps and $n$ down steps.
For example, we have five Dyck paths of size three:
\begin{align*}
UUUDDD \qquad UUDUDD \qquad UUDDUD \qquad UDUUDD \qquad UDUDUD
\end{align*}
where $U$ (resp. $D$) stands for an up (resp. down) step.
We call the Dyck path $U^{n}D^{n}$ the top Dyck path of size $n$.
Let $\lambda,\mu$ be two Dyck paths.
We write $\lambda\le\mu$ if $\mu$ is above $\lambda$.

A {\it ribbon} is a connected skew shape which does not contain
a $2$-by-$2$ rectangle.
A ribbon is called a {\it Dyck tile} if the centers of the unit cells 
in the ribbon form a Dyck path.
A unit cell is a Dyck tile of size zero, and the size of a Dyck tile $d$ 
is the size of the Dyck path which characterizes $d$.
We call a Dyck tile of size zero a trivial Dyck tile, and other Dyck tiles
non-trivial Dyck tiles.

Let $\lambda,\mu$ be two Dyck paths such that $\lambda\le\mu$.
We consider the tiling of the region above $\lambda$ and below $\mu$
by Dyck tiles.
In this paper, we consider only cover-inclusive Dyck tilings.
Here, ``cover-inclusiveness" means that if we move a Dyck tile 
downward by $(0,-2)$, then it is contained in another Dyck tile
or strictly below the path $\lambda$.
\begin{figure}[ht]
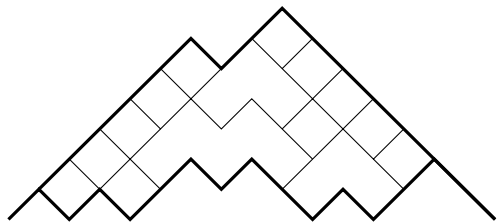

\tikzpic{-0.5}{[scale=.4]
\draw[very thick](0,0)--(1,1)--(2,0)--(3,1)--(4,0)--(6,2)--(7,1)--(8,2)--(10,0)--(11,1)--(12,0)--(14,2)--(16,0);
\draw[very thick](1,1)--(6,6)--(7,5)--(9,7)--(16,0);
\draw(2,2)--(3,1)--(7,5)(3,3)--(4,2)(4,4)--(5,3)(5,5)--(7,3)--(8,4)--(10,2)(4,2)--(5,1);
\draw(8,6)--(13,1)(10,6)--(9,5)(11,5)--(9,3)(12,4)--(9,1)(13,3)--(12,2);
}
\caption{An example of a cover-inclusive Dyck tiling of size $8$}
\label{fig:DT}
\end{figure}
We give an example of a cover-inclusive Dyck tiling in Figure \ref{fig:DT}.
We have fifteen Dyck tiles, and three non-trivial Dyck tiles.

\begin{remark}
The cover-inclusive Dyck tilings correspond to Dyck tilings
which satisfy the rule I in \cite{SZJ12}.
\end{remark}

A Dyck tile $d$ consists of several unit squares. Given a unit box $s$, we denote 
by $h(s)$ the height of the center of $s$.
\begin{defn}
We define the height of $d$ by
\begin{align*}
h(d):=\min\{h(s):s\in d\}.
\end{align*}
\end{defn}

In Figure \ref{fig:DT}, the heights of three non-trivial Dyck tiles are 
one, two, and four.

\subsection{Correspondence between a Dyck path and a Temperley--Lieb diagram}
\label{sec:DpTL}
In this section, we give a correspondence between a Dyck path of size $n$ and 
an $n$-strand Temperley--Lieb diagram $D$ possibly with dots.
We fold an $n$-strand Temperley--Lieb diagram $D$ down to the right such that $2n$ points are in line.
Then, we obtain a graph $\mu$ with $2n$ points such that a point is connected 
to another point by a cap. We have $n$ caps in the graph $\mu$.
Note that the strands in $D$ are non-intersecting,
therefore the caps in $\mu$ are also non-intersecting.
Obviously, we have a one-to-one correspondence between a strand in $D$ and 
a cap in $\mu$.
A cap in $\mu$ has a dot if the corresponding strand in $D$ has a dot.
If we replace the left end point of a cap with $U$, and the right end point of 
a cap with $D$, then we have a Dyck path.
The Dyck path which consists of possibly dotted caps corresponds to 
the diagram $D$.

In Figure \ref{fig:DyckTLd}, we give the Dyck path with a dot which 
corresponds to the diagram $A$ in Example \ref{ex:AB}.
\begin{figure}[ht]
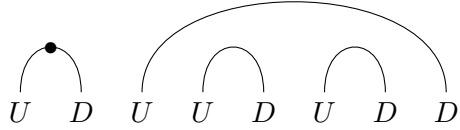

\tikzpic{-0.5}{[xscale=.8,yscale=.8]
\draw(0,0)node[anchor=north]{$U$}..controls(0,1)and(1,1)..(1,0)node[anchor=north]{$D$};
\draw(0.5,0.72)node{$\bullet$};
\draw(2,0)node[anchor=north]{$U$}..controls(2,2)and(7,2)..(7,0)node[anchor=north]{$D$};
\draw(3,0)node[anchor=north]{$U$}..controls(3,1)and(4,1)..(4,0)node[anchor=north]{$D$};
\draw(5,0)node[anchor=north]{$U$}..controls(5,1)and(6,1)..(6,0)node[anchor=north]{$D$};
}
\caption{A Dyck path corresponding to a $4$-strand Temperley--Lieb diagram.}
\label{fig:DyckTLd}
\end{figure}
The corresponding Dyck path is $UDUUDUDD$ and the left-most cap has a dot.

We call a graph consisting of $2n$ points and $n$ non-crossing caps with or without dots
a {\it chord diagram}.
The number of chord diagrams consisting of $n$ caps without dots is 
given by the Catalan number $C_{n}$.
Similarly, the number of chord diagrams of $n$ caps with dots which
correspond to the $n$ strand Temperley--Lieb diagrams of type $D$ 
is given by $\dim\mathrm{TL}_{n}^{D}$.
This is easily obtained since the number of chord diagrams with an even number of dots
is $\genfrac{}{}{}{}{1}{2}\genfrac{[}{]}{0pt}{}{2n}{n}$, and those with an odd number of dots
is $C_{n}-1$. Here, we use the property (P2c), which implies that the chord diagram 
corresponding to the identity $\mathbf{1}$ cannot have a dot.
As a consequence, the number of chord diagrams with dots is given by 
\begin{align*}
\genfrac{}{}{}{}{1}{2}\genfrac{[}{]}{0pt}{}{2n}{n}+C_n-1=\dim\mathrm{TL}_{n}^{D}.
\end{align*}

\subsection{Hermite histories}
We introduce the notion of Hermite histories on a cover-inclusive Dyck tiling: 
horizontal and vertical.
(Horizontal Hermite histories are introduced in \cite[Section 3]{KMPW12}.)
\begin{figure}[ht]
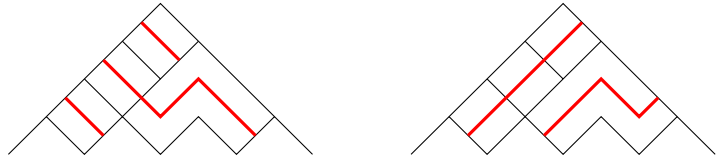

\tikzpic{-0.5}{[scale=0.5]
\draw(0,0)--(1,1)--(2,0)--(3,1)--(4,0)--(5,1)--(6,0)--(7,1)--(8,0);
\draw(1,1)--(4,4)--(7,1);
\draw(2,2)--(3,1)--(5,3)(3,3)--(4,2);
\draw[very thick,red](3.5,3.5)--(4.5,2.5)(2.5,2.5)--(4,1)--(5,2)--(6.5,0.5)(1.5,1.5)--(2.5,0.5);
}
\qquad
\tikzpic{-0.5}{[scale=0.5]
\draw(0,0)--(1,1)--(2,0)--(3,1)--(4,0)--(5,1)--(6,0)--(7,1)--(8,0);
\draw(1,1)--(4,4)--(7,1);
\draw(2,2)--(3,1)--(5,3)(3,3)--(4,2);
\draw[very thick,red](1.5,0.5)--(4.5,3.5)(3.5,0.5)--(5,2)--(6,1)--(6.5,1.5);
}
\caption{Horizontal Hermite history (left) and vertical Hermite history (right).}
\label{fig:Hh}
\end{figure}
We draw a line in Dyck tiles in a Dyck tiling as follows.
The boundary of a Dyck tile consists of up steps ($\diagup$) and down steps ($\diagdown$). 
For a horizontal Hermite history, we write a line from the left-most up step to the right-most
up step in a Dyck tile. We concatenate the lines in Dyck tiles if it is possible to extend.
A line passing through Dyck tiles is called a {\it horizontal trajectory}.
Similarly, for a vertical Hermite history, we write a line from the left-most down step 
to the right-most down step in a Dyck tile. We extend the lines if possible.
A line obtained as above is called a {\it vertical trajectory}.
We emphasize that the length of a trajectory is larger than zero. 
One can consider trajectories of length zero in an Hermite history (see Remark \ref{remark:traj}).
Figure \ref{fig:Hh} gives examples of horizontal and vertical Hermite histories.
We have three horizontal trajectories in the left figure, and two vertical
trajectories in the right figure. These trajectories have length larger than zero.

A horizontal Hermite history  can be obtained as a mirror image of a vertical
Hermite history, and vice versa.

\begin{prop}
\label{prop:traj}
Let $T$ be a Dyck tiling  of size $n$ above $\lambda$ and below $\mu$.
A horizontal or vertical Hermite history $H(T)$ of $T$ has the following properties:
\begin{enumerate}[(a)]
\item $H(T)$ contains at most $n-1$ trajectories with non-zero length.
\item All trajectories are non-crossing.	
\item Any trajectory starts from an up step in $\mu$ for the horizontal Hermite history. 
Similarly, all trajectories ends at a down step in $\mu$ for the vertical Hermite history.
\end{enumerate}
\end{prop}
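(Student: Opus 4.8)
The plan is to establish all three properties for the vertical Hermite history and to deduce the horizontal case from the mirror symmetry recorded just above the proposition (a left--right reflection of a Dyck tiling interchanges up steps and down steps and turns one construction into the other). So fix $T$ above $\lambda$ and below $\mu$ of size $n$ and consider its vertical trajectories.

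The heart of the argument is a local description of how the line-segments drawn inside the tiles match up along shared edges. Every down step $e$ occurring on the boundary of some tile of $T$ is adjacent to exactly two unit cells of the ambient lattice, one below-left of $e$ and one above-right of $e$; for the below-left cell $e$ is its right-most down step, and for the above-right cell $e$ is its left-most down step. Using this, the shape of a Dyck tile (inside a single Dyck tile the drawn line runs from the unique left-most down step to the unique right-most down step, with strictly increasing $x$-coordinate along it), and the cover-inclusive hypothesis, I would prove the following trichotomy: either (i) both adjacent cells belong to tiles of $T$, $e$ is interior, and $e$ is simultaneously the right-most down step of one tile and the left-most down step of another, so the two corresponding segments are concatenated along $e$; or (ii) $e$ lies on $\mu$, and then $e$ is the right-most down step of the unique tile immediately below-left of it and is the left-most down step of no tile of $T$; or (iii) $e$ lies on $\lambda$, and then $e$ is the left-most down step of the unique tile immediately above-right of it and is the right-most down step of no tile of $T$. (A down step common to $\lambda$ and $\mu$, such as the last step of both paths, bounds no tile and falls outside this list.) Verifying this trichotomy — in particular excluding configurations in which $e$ would be the left-most down step of two distinct tiles, or in which a segment fails to continue across an interior edge — is the substantive step, and I expect it to be the main obstacle.

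Granting the trichotomy, the three assertions follow quickly. \emph{Property (c).} A vertical trajectory is a maximal concatenation of tile-segments; following the $x$-coordinate it runs from a left end to a right end. The right end is a down step which is the right-most down step of the last tile but the left-most down step of no tile, hence of type (ii) by the trichotomy, that is, it lies on $\mu$; symmetrically the left end is of type (iii) and lies on $\lambda$. Maximality and finiteness are automatic: the $x$-coordinate strictly increases along every segment, hence along the whole trajectory, so a trajectory can neither revisit a tile nor be extended indefinitely, and it halts exactly when it reaches an edge of type (ii) or (iii). \emph{Property (b).} Each tile carries a single drawn segment, so it lies on at most one trajectory; an interior edge of type (i) is the right-most down step of exactly one tile and the left-most down step of exactly one tile, so the trajectory entering $e$ from the lower-left and the one leaving $e$ to the upper-right are the same trajectory. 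Hence distinct trajectories have no point in common, and in particular do not cross; strict $x$-monotonicity rules out a self-crossing. \emph{Property (a).} Sending a trajectory to the type-(ii) edge at its right end is an injection from the set of trajectories into the set of down steps of $\mu$ that bound a tile of $T$; since $\lambda$ and $\mu$ share at least their common last down step, which bounds no tile, this set has at most $n-1$ elements, which gives the bound.
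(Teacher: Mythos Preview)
Your trichotomy is false as stated, and this undermines your derivation of (c) as well as the stronger side-claim that the left end of every vertical trajectory lies on $\lambda$. Take $n=4$, $\lambda=(UD)^4$, $\mu=U^4D^4$, and the cover-inclusive tiling consisting of the size-one tile $d_1$ with cells $(2,1),(3,2),(4,1)$ together with trivial tiles $d_2=\{(4,3)\}$, $d_3=\{(5,2)\}$, $d_4=\{(6,1)\}$ (coordinates are cell centres; each downshift lands inside $d_1$ or strictly below $\lambda$, so the tiling is cover-inclusive). The down-edge $e$ from $(3,3)$ to $(4,2)$ is interior: the cell $(3,2)$ below-left of $e$ lies in $d_1$ and the cell $(4,3)$ above-right is $d_2$. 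Now $e$ is indeed the left-most down step of $d_2$, but the right-most down step of $d_1$ is the edge from $(4,2)$ to $(5,1)$, not $e$. So $e$ fits none of your three types, and the trajectory through $d_2$ begins at the interior edge $e$, not on $\lambda$.

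What \emph{is} true, and what suffices for (c), is the one-sided version: if $e$ is the right-most down step of a tile $t$ and the cell above-right of $e$ lies in some tile $t'$, then $e$ is the left-most down step of $t'$. This is precisely what your cover-inclusiveness argument yields once you use that the below-left cell is the \emph{right-most} cell of $t$ (so the downshift of $t'$, which must land in $t$, cannot contain the cell one step further right). With this weaker claim the right end of every trajectory is forced onto $\mu$, and your deductions of (a) and (b) then go through with only cosmetic changes (at an interior down-edge at most one trajectory can enter from below-left and at most one can leave to the upper-right, and when both occur they coincide). The paper argues (c) along a different line: it invokes cover-inclusiveness in the global form ``a tile on top of another has weakly smaller size'' to conclude that the right end of a lower trajectory lies weakly to the right of the one above it, hence on $\mu$. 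Neither argument needs the full trichotomy you propose.
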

\begin{proof}
We prove only for a vertical Hermite history since the horizontal case can be proven in a similar way.

(a) By construction of a trajectory, an down step in $\lambda$ is connected to 
an down step in $\mu$ by a trajectory. 
Let $d$ be the right-most down step in $\lambda$.
Since $\lambda$ and $\mu$ share the same down step $d$ at their right ends, there
is no trajectory with non-zero length which starts from $d$.
If we take $\lambda=(UD)^{n}$ and $\mu=U^nD^n$, and $T$ does not have
non-trivial Dyck tiles, then it is obvious that we have $n-1$ trajectories with non-zero length. 
Since we have $n$ down steps in $\lambda$ and $\mu$, the maximal number of 
trajectories with non-zero is $n-1$.

(b) Any Dyck tile $t$ in $T$ has a line inside $t$. The right-most down step 
in $t$ is attached to the left-most down step of another Dyck tile, or to 
a down step in $\mu$. This property insures that all trajectories are non-crossing.

(c) A down step $d$ in $\mu$  is a down step of a Dyck tile $t$, or a down step of $\lambda$. 
If $d$ is the right-most down step of $t$, then there exists a trajectory which ends at $d$.
Otherwise, there is no trajectory which ends at $d$ and has non-zero legnth.
Recall that we consider a cover-inclusive Dyck tiling $T$. 
Especially, if two Dyck tiles $t_1$ is on the top of another Dyck tile $t_2$,
the size of $t_1$ is weakly smaller than that of $t_2$.
This means that the right end of the trajectory containing $t_2$ is right to 
the right end of the trajectory containing $t_1$.
Recall we concatenate lines in Dyck tiles to obtain a trajectory. 
If a trajectory is below another trajectory, the right end of the former 
is right to the latter. 
These implies that the right end of a trajectory ends 
at a down step in $\mu$.
\end{proof}

\begin{remark}
\label{remark:traj}
Suppose that we have $m\le n-2$ trajectories in a vertical Hermite history $H(T)$. 
By the proof of (c) in Proposition \ref{prop:traj}, there exists 
a down step $d$ in $\mu$ such that there exists no trajectory 
ending at $d$. If we regard this situation as a trajectory of length zero,
we have $n$ trajectories in total. Especially, if $n=1$, we have 
no trajectories of length larger than zero. 
This situation can be interpreted as follows: there 
is a trajectory of length zero starting from the unique down step in 
$\lambda$ to the unique down step in $\mu$.
Therefore, if we allow the trajectories of length zero, then
any Hermite history contains $n$ trajectories.	
\end{remark}

\subsection{Dyck tilings and the diagram with an even number of dots}
Let $D$ be an $n+1$-strand Temperley--Lieb diagram with an even number of dots.
Then, by unfolding the diagram $D$, we have a chord diagram $C$ of size $2n+2$.
Since $D$ does not contain the product $E_0E_1$ in the algebraic representation, 
the chord diagram $C$ also has an even number of dots. 
Since $D$ contains only an even number of dots, $D$ satisfies the 
recurrence relation (\ref{eq:Dev}).

First, suppose that the Jones--Wenzl projection $R_{n}$ for a Temperley--Lieb algebra has the form
\begin{align*}
R_{n}=R_{n-1}\left(\sum_{i\in I(D)}\genfrac{}{}{}{}{\langle i\rangle}{\langle n+1\rangle}D_{i}\right),
\end{align*}
where $R_n$ is the projection, $D_{i}$ is the diagram obtained from $D$ by removing the inner-most
cap at position $i$ and $\langle n\rangle$ is a function of $n$.
The projections for types $A$ and $B$, and type $D$ with $E_0E_1=0$ have this form.
In type $A$, we have $\langle n\rangle$ is simply $[n]$, and 
$\langle n\rangle$ is simply $[n]_s$ for type $B$ (see \cite{Shi24} for a definition of $[n]_s$).
In type $D$ (an even number of dots), $\langle n\rangle$ is $\genfrac{}{}{}{}{[2(n-1)]}{[n-1]}$. 
Once again, note that this works well only in the case of the diagrams of type $D$ with an even number of dots.
If the diagram contains a unique dot, then the recurrence relation (\ref{eq:Dodd}) contains 
the coefficients
\begin{align*}
\mathrm{coef}(h_{n,i})=\genfrac{}{}{}{}{[n][n+1-i]}{[2n][n+1]},
\end{align*}
for $1\le i\le n$, and these coefficients cannot be written as $\langle i\rangle/\langle n+1\rangle$.

Secondly, we have the similarity between type $D$ (an even number of dots) and type $B$. 
In type $B$, we cannot remove the inner-most cap at the position $i>1$ if it has 
a dot \cite{Shi24}. Similarly in type $D$, the diagram of $E_0$ has two dots, and we are not allowed to have a diagram
which has the same connectivity as $E_{i}$, $i>1$, and has two dots on a cap and a cup.
Therefore, in type $D$, we cannot remove the inner-most cap with a dot if its position is $i>1$.

The second observation can be easily translated in terms of Dyck tilings.
Some Dyck tilings are not allowed in type $B$, and such Dyck tilings are also not 
allowed in type $D$ as well.

We can apply the same algorithm for type $B$ studied in \cite{Shi24} for these diagrams of type $D$.
When a diagram $D$ contains a cap with a dot, this cap corresponds to a pair 
of an up step and a down step in the chord diagram $C$ corresponding to $D$.
Let $D$ be a Dyck tiling whose bottom Dyck path is the same as $C$, and 
top Dyck path is $U^{n+1}D^{n+1}$.
Let $c$ be a dotted cap in a chord diagram, and $x_{l}$ (resp. $x_{r}$)
the position of the left (resp. right) end of $c$ in the chord diagram $C$.
Then, the size of the cap $c$ is defined to be 
$l(c):=(x_{r}-x_{l}+1)/2$.

\begin{defn}
Let $\mu$ be the Dyck path.
The generating function of Dyck tilings above $\mu$ and below the top path is defined to be
\begin{align*}
Z(\mu):=
{\sum_{T}}'\prod_{d\in T}\genfrac{}{}{}{}{[h(d)][2h(d)-2]}{[2h(d)][h(d)-1]},
\end{align*}
where $d\in T$ is a Dyck tile in a Dyck tiling $T$, we define $\genfrac{}{}{}{}{[2h(d)-2]}{[h(d)-1]}:=1$ if $h(d)=1$, 
and the sum ${\sum}'$ is taken over all Dyck tilings such that there is no Dyck tile
of size $l(c)$ above the dotted cap $c$.
\end{defn}

Then, we have the following theorem.
\begin{theorem}
\label{thrm:Deven}
Let $D$ be a diagram with an even number of dots, and $\mu$ the Dyck path corresponding to $D$.
The coefficient $\mathrm{coef}(D)$ of $D$ in $Q_{n}$ is given by
\begin{align*}
\mathrm{coef}(D)=Z(\mu).
\end{align*}
\end{theorem}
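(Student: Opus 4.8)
The plan is to prove the identity by induction on $n$, reducing it to the statement that the generating function $Z(\mu)$ obeys the same recursion as the coefficient $\mathrm{coef}(D)$. By Proposition~\ref{prop:Deven}, for a diagram $D$ with an even number of dots one has
\begin{align*}
\mathrm{coef}(D)=\sum_{i\in I(D)}\mathrm{coef}(g_{n,i})\,\mathrm{coef}(D_{i}),
\end{align*}
where every $D_{i}$ again has an even number of dots, so it suffices to establish $Z(\mu)=\sum_{i\in I(D)}\mathrm{coef}(g_{n,i})\,Z(\mu_{i})$ with $\mu_{i}$ the Dyck path of $D_{i}$, together with the base cases read off from $Q_{0}$ and $Q_{1}$. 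The guiding algebraic fact is that, writing $\langle m\rangle:=[2m-2]/[m-1]$ for $m\ge2$ and $\langle1\rangle:=1$, the weight of a Dyck tile $d$ equals $\frac{[h(d)][2h(d)-2]}{[2h(d)][h(d)-1]}=\langle h(d)\rangle/\langle h(d)+1\rangle$ while $\mathrm{coef}(g_{n,i})=\langle i\rangle/\langle n+1\rangle$; hence a product of tile weights over consecutive heights telescopes to a coefficient $\mathrm{coef}(g_{n,i})$. This is the same formal pattern as the type $A$ and type $B$ recursions of \cite{Mor17,Shi24}, with $[m]$ or $[m]_{s}$ replaced by $\langle m\rangle$, which is why the combinatorial machinery of \cite{Shi24} transfers.

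For the recursion on $Z$ I would use the Dyck tiling strip / Hermite history correspondence of \cite{KMPW12}, in the form used for types $A$ and $B$ in \cite{Shi24}. Under the folding of Section~\ref{sec:DpTL}, the inner-most caps of $D$ are precisely the peaks of $\mu$ whose left endpoint has index at most $n+1$ (small bottom arcs at positions $1,\dots,n$, and the right-most vertical strand at position $n+1$), and $I(D)$ keeps those peaks that are undotted, or dotted at position $1$. Given a Dyck tiling $T$ above $\mu$ and below the top path $U^{n+1}D^{n+1}$ that meets the restriction defining $Z$, peeling off the Dyck tiling strip associated to the peak at position $i$ removes that peak and yields a Dyck tiling $T_{i}$ above $\mu_{i}$ and below $U^{n}D^{n}$, still meeting the restriction; by Proposition~\ref{prop:traj} and cover-inclusiveness their weights satisfy $\mathrm{wt}(T)=\mathrm{coef}(g_{n,i})\,\mathrm{wt}(T_{i})$, the rescaling factor $\langle i\rangle/\langle n+1\rangle$ arising as a telescoping product of the tile weights $\langle h\rangle/\langle h+1\rangle$ along the strip. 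One then checks that $T\mapsto(i,T_{i})$ is a bijection from the admissible tilings above $\mu$ onto the disjoint union over $i\in I(D)$ of the admissible tilings above $\mu_{i}$, which yields the recursion for $Z$. Essentially all of this is a transcription of the type $B$ argument; only the numerical value of the weight changes.

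It remains to reconcile the restriction with the recursion. Algebraically, a dotted inner-most cap of $D$ at a position $i>1$ cannot be removed (Proposition~\ref{prop:Deven} and the remark following it), so no term $\mathrm{coef}(g_{n,i})\,Z(\mu_{i})$ occurs for such $i$; combinatorially, a Dyck tiling strip that would realize such a removal is forced exactly by a Dyck tile of size $l(c)$ lying above the dotted cap $c$, so forbidding that tile deletes precisely the spurious contribution. For a dot at position $1$ the dot migrates to a cap or vertical strand through the left-most bottom point, matching the rule of Proposition~\ref{prop:Deven}, and one checks that strip-peeling preserves the restriction in both directions, just as for the dotted caps of type $B$ in \cite{Shi24}. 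Combining the recursion for $Z(\mu)$ with Proposition~\ref{prop:Deven} and the base cases closes the induction.

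The step I expect to be the main obstacle is the second paragraph: identifying the peaks of $\mu$ with $I(D)$ for a general bottom path, showing that the peeled strip produces exactly the rescaling $\mathrm{coef}(g_{n,i})$, and proving bijectivity of strip-peeling compatibly with the restriction. Because the recursion~(\ref{eq:Dev}) is formally identical to the type $A$ and type $B$ recursions already established in \cite{Shi24}, however, I would invoke those results rather than reprove them, so the genuinely new bookkeeping is the handling of the dots, which itself parallels the treatment of dotted caps in type $B$.
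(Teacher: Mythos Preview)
Your proposal is correct and follows essentially the same approach as the paper: both prove that $Z(\mu)$ satisfies the recursion of Proposition~\ref{prop:Deven} via a weight-preserving bijection on Dyck tilings, with the telescoping $\langle i\rangle/\langle n+1\rangle=\mathrm{coef}(g_{n,i})$ and the dotted-cap restriction handled exactly as you describe. The paper phrases the bijection concretely via the vertical Hermite history trajectory through the left-most down step of the top path (rather than the Dyck tiling strip language), but this is the same map, and your invocation of the type $A$/$B$ argument from \cite{Shi24} is what the paper implicitly relies on as well.
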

\begin{proof}
We prove that the generating function $Z(\mu)$ satisfies the 
recurrence relation (\ref{eq:Dev}).
Fix a Dyck tiling $D$ above $\mu$ and below the top path $U^{n+1}D^{n+1}$
such that there is no Dyck tile of size $l(c)$ above a dotted cap $c$.
Let $d$ be the left-most down step in the top path, 
and consider a trajectory $T_d$ in the vertical Hermite history of $D$ which 
contains the down step $d$.
We have two cases: 1) $T_d$ connects the down step $d$ and a step in the 
bottom Dyck path $\mu$, and 2) $T_{d}$ does not touch a down step in $\mu$.
\begin{enumerate}
\item The trajectory $T_{d}$ consists of only trivial Dyck tiles. 
Let $h_{low}$ be the height of the lowest unit cell in $T_{d}$.
Then, the product of the weights of the Dyck tiles in $T_{d}$ is given by
\begin{align}
\label{eq:wtTd}
\genfrac{}{}{}{}{[n][2(h_{low}-1)]}{[2n][h_{low}-1]}
\end{align}
if $h_{low}\ge2$ and $[n]/[2n]$ if $h_{low}=1$.
Since $T_d$ touches a step in the bottom Dyck path $\mu$, there is an inner-most cap 
at position $h_{low}$.
To remove the cap at position $h_{low}$ corresponds to give the weight (\ref{eq:wtTd}),
and to delete the trajectory $T_{d}$.
If $h_{low}=1$, we remove the cap at position one. If $h_{low}\ge2$, the cap at position
$h_{low}$ does not have a dot since this cap is not outer-most.
By removing an inner-most cap, we have a Dyck tiling of smaller size. 
\item The trajectory $T_{d}$ consists of only trivial Dyck tiles, however, the low end of $T_d$
touches a down step of non-trivial Dyck tile $d_1$.
As in the case of (1), the product of the weights of the Dyck tiles in $T_d$ is given 
by Eq. (\ref{eq:wtTd}).
By the same argument as (1), to remove the cap at position $h_{low}$ corresponds to 
give the weight, to delete the trajectory $T_{d}$ and to shrink the size of $d_1$ by one.
Suppose that we have a dotted cap $c$ and there is a non-trivial Dyck tile of size $l(c)$
above $c$.
The above observation implies that the trajectory $T_d$ detects the inner-most dotted cap 
at position $h_{low}$. 
However, we are not allowed to remove the inner-most dotted cap at position $h_{low}\ge2$.
This is avoidable if we consider only Dyck tilings such that 
there is no Dyck tile of size $l(c)$ above $c$. In fact, the generating function $Z(\mu)$
satisfies this condition.
\end{enumerate}
Suppose that the bottom path of the Dyck tiling corresponds to the diagram $D$.
By the correspondence between a Temperley--Lieb diagram and a chord diagram,
the Dyck tiling of smaller size corresponds to $D_{i}$ in Eq. (\ref{eq:Dev}).
By combining (1), (2) and the above observation, it is clear that $Z(\mu)$ satisfies the 
recurrence relation (\ref{eq:Dev}).
\end{proof}

\begin{example}
\label{ex:E0321}
We consider the coefficient of $E_1E_2E_3E_0$ in $P_3$.
\begin{align*}
E_1E_2E_3E_0 \Longleftrightarrow
\tikzpic{-0.5}{[scale=0.6]
\draw(0,0)..controls(0,1)and(1,1)..(1,0)(0,2)..controls(0,1)and(1,1)..(1,2);
\draw(0.5,0.72)node{$\bullet$};
\draw(2,0)..controls(2,1)and(3,1)..(3,0)(2,2)..controls(2,1)and(3,1)..(3,2);
\draw(2.5,1.25)node{$\bullet$};
}\Longleftrightarrow
\tikzpic{-0.5}{[scale=0.4]
\draw(0,0)--(1,1)--(2,0)--(3,1)--(4,0)--(5,1)--(6,0)--(7,1)--(8,0);
\draw(1,1)--(4,4)--(7,1);
\draw(2,2)--(3,1)--(4,2)--(5,1)(3,3)--(4,2)--(5,3)(5,1)--(6,2);
\draw(0.5,0.5)node{$\bullet$}(1.5,0.5)node{$\bullet$};
\draw(4.5,0.5)node{$\bullet$}(5.5,0.5)node{$\bullet$};
}
\end{align*}
We have two admissible Dyck tilings.
\begin{align*}
\tikzpic{-0.5}{[scale=0.4]
\draw(0,0)--(1,1)--(2,0)--(3,1)--(4,0)--(5,1)--(6,0)--(7,1)--(8,0);
\draw(1,1)--(4,4)--(7,1);
\draw(2,2)--(3,1)--(4,2)--(5,1)(3,3)--(4,2)--(5,3)(5,1)--(6,2);
\draw(0.5,0.5)node{$\bullet$}(1.5,0.5)node{$\bullet$};
\draw(4.5,0.5)node{$\bullet$}(5.5,0.5)node{$\bullet$};
}
\quad
\tikzpic{-0.5}{[scale=0.4]
\draw(0,0)--(1,1)--(2,0)--(3,1)--(4,0)--(5,1)--(6,0)--(7,1)--(8,0);
\draw(1,1)--(4,4)--(7,1);
\draw(3,3)--(4,2)--(5,3)(4,2)--(5,1)--(6,2);
\draw(0.5,0.5)node{$\bullet$}(1.5,0.5)node{$\bullet$};
\draw(4.5,0.5)node{$\bullet$}(5.5,0.5)node{$\bullet$};
}
\end{align*}
Then, the coefficient of this diagram is given by
\begin{align*}
\genfrac{}{}{}{}{[3]}{[6]}\genfrac{}{}{}{}{[2]}{[4]}\genfrac{}{}{}{}{1}{[2]}
+\genfrac{}{}{}{}{[3]}{[6]}\genfrac{}{}{}{}{1}{[2]}
=\genfrac{}{}{}{}{[3]^2}{[6][4]}.
\end{align*}

There are two non-admissible diagrams:
\begin{align*}
\tikzpic{-0.5}{[scale=0.4]
\draw(0,0)--(1,1)--(2,0)--(3,1)--(4,0)--(5,1)--(6,0)--(7,1)--(8,0);
\draw(1,1)--(4,4)--(7,1);
\draw(2,2)--(3,1)--(5,3)(3,3)--(4,2);
\draw(0.5,0.5)node{$\bullet$}(1.5,0.5)node{$\bullet$};
\draw(4.5,0.5)node{$\bullet$}(5.5,0.5)node{$\bullet$};
\draw(5,2)node{$\star$};
}
\quad
\tikzpic{-0.5}{[scale=0.4]
\draw(0,0)--(1,1)--(2,0)--(3,1)--(4,0)--(5,1)--(6,0)--(7,1)--(8,0);
\draw(1,1)--(4,4)--(7,1);
\draw(3,3)--(4,2)--(5,3);
\draw(0.5,0.5)node{$\bullet$}(1.5,0.5)node{$\bullet$};
\draw(4.5,0.5)node{$\bullet$}(5.5,0.5)node{$\bullet$};
\draw(5,2)node{$\star$};
}
\end{align*}
The Dyck tile with $\star$ violates the condition. The dotted cap below $\star$ 
has a Dyck tile of size $1$.
\end{example}

\subsection{The diagram with a unique dot}
\label{sec:DTodd}
We introduce two colors on a vertical Hermite history.
Recall that a line in an Hermite history is called a trajectory. In other words, an Hermite history
consists of several trajectories.
As in Remark \ref{remark:traj}, we have $n$ trajectories in an Hermite history if we 
allow a trajectory of length zero.

We consider the following conditions on a vertical Hermite history.
\begin{enumerate}[(Q1)]
\item There exists a unique trajectory with the color green. The length of this trajectory is non-zero.	
\item The trajectories left to the green trajectory are all red.
If the left-most trajectory is green, then there is no red trajectories with length non-zero.
\item There are no trajectories on Dyck tiles right to the green trajectory. That is, we have a Dyck 
tiling $T$ right to the green trajectory but remove all the trajectories on the Dyck tiles in $T$.
\end{enumerate}

The condition (Q3) implies that there may be a region without trajectories.
Such a region may be empty.

We impose one more condition on the vertical Hermite history.
By Remark \ref{remark:traj}, we have $n$ vertical trajectories in a vertical Hermite 
history if we include trajectories of length zero.
Fix a Dyck tiling and its vertical Hermite history. 
We denote by $T_{\ge0}$ the set of vertical
trajectories of size weakly larger than zero.
Suppose that the unique green trajectory $t$ is the $m$-th vertical trajectory
from left in $T_{\ge0}$. 
Let $t_i$, $1\le i\le n-m$, be the $m+i$-th vertical trajectory from left in $T_{\ge0}$.
We denote by $n_{i}$ the number of Dyck tiles in the trajectory $t_{i}$.
These $n-m$ vertical trajectories are removed by (Q3).
We consider the following condition:
\begin{enumerate}
\item[(Q4)]
$n_{1}\ge n_{2}\ge\ldots\ge n_{n-m}$.
\end{enumerate}
Note that $n_{n-m}=0$ in (Q4) since the lower Dyck path and the top Dyck path 
in a Dyck tiling share the same right-most down  step and we have a trajectory
of length zero. 

\begin{defn}
We call a vertical Hermite history which satisfies the conditions (Q1) to (Q4)
a {\it bi-colored vertical Hermite history}.
\end{defn}

We first study the condition (Q4).
Fix a Dyck tiling $T$ of size $n$ above a Dyck path $\lambda$ and below the top Dyck path, 
and its bi-colored vertical Hermite history $V$.
Suppose that the $m$-th trajectory from left in $V$ is the unique green trajectory.
We introduce the operation which we call a deletion of $T$.
Let $d$ be the left-most down step of the lowest Dyck tile in the first trajectory.
Let $\lambda_{i+1}$ be the down step of $\lambda$ which is obtained by moving 
$d$ downward. Since $\lambda_{i+1}$ is a down step and $d$ is in the first trajectory,
the step $\lambda_{i}$ is an up step in $\lambda$.
We delete all the Dyck tiles in the first vertical trajectory and shrink all the Dyck tiles 
above $\lambda_{i}$ and $\lambda_{i+1}$.
The newly obtained Dyck tiling has a lower Dyck path $\lambda'$ and 
the upper Dyck path $U^{n-1}D^{n-1}$.
Here, the Dyck path $\lambda'$ is obtained from $\lambda$ by deleting $\lambda_{i}$ and $\lambda_{i+1}$.
This operation is visualized in Figure \ref{fig:delDT}.
\begin{figure}[ht]
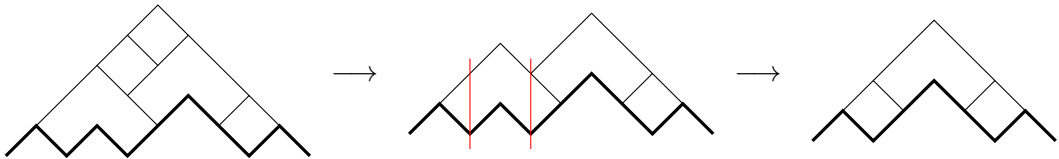

\tikzpic{-0.5}{[scale=0.4]
\draw[very thick](0,0)--(1,1)--(2,0)--(3,1)--(4,0)--(6,2)--(8,0)--(9,1)--(10,0);
\draw(1,1)--(3,3)--(5,1)(4,2)--(6,4)--(9,1)(8,2)--(7,1);
\draw(3,3)--(5,5)--(6,4)(4,4)--(5,3);
}
$\longrightarrow$
\tikzpic{-0.5}{[scale=0.4]
\draw[very thick](0,0)--(1,1)--(2,0)--(3,1)--(4,0)--(6,2)--(8,0)--(9,1)--(10,0);
\draw(1,1)--(3,3)--(5,1)(4,2)--(6,4)--(9,1)(8,2)--(7,1);
\draw[red](2,-0.5)--(2,2.5)(4,-0.5)--(4,2.5);
}
$\longrightarrow$
\tikzpic{-0.5}{[scale=0.4]
\draw[very thick](0,0)--(1,1)--(2,0)--(4,2)--(6,0)--(7,1)--(8,0);
\draw(1,1)--(2,2)--(3,1)(2,2)--(4,4)--(7,1)(6,2)--(5,1);
}
\caption{A deletion of a Dyck tiling}
\label{fig:delDT}
\end{figure}
We call this operation a {\it deletion} of $T$.
We perform the deletion of $T$ $m$ times.
Then, we obtain a Dyck tiling $T'$ of size $n-m$.
The condition (Q4) is equivalent to the following condition on $T'$: 
\begin{enumerate}
\item[(Q$4'$)]
$T'$ consists of only trivial Dyck tiles.
\end{enumerate}
This condition captures the property of (Q4) in terms of a Dyck tiling.
We make use of this equivalence between (Q4) and (Q$4'$) in 
the proof of Theorem \ref{thrm:DoddZ} below.

\begin{remark}
The equivalence between (Q4) and (Q$4'$) holds if and only if the upper path 
of a Dyck tiling is the top Dyck path $U^nD^n$.
Let us consider a Dyck tiling whose upper path is the top Dyck path.
Then, by Proposition \ref{prop:traj}, any vertical trajectory 
ends at a down step in the upper path of a Dyck tiling. 
Since the upper path of a Dyck tiling is the top Dyck path, 
it is obvious that $n_{i}<n_{i+1}$ gives a Dyck tiling 
with non-trivial Dyck tiles. A non-trivial Dyck tile appears only in this way.
\end{remark}

\begin{example}
Consider the Dyck tiling in Figure \ref{fig:Hh}.
We have the following unique vertical Hermite history with two colors.
\begin{align*}
\tikzpic{-0.5}{[scale=0.5]
\draw(0,0)--(1,1)--(2,0)--(3,1)--(4,0)--(5,1)--(6,0)--(7,1)--(8,0);
\draw(1,1)--(4,4)--(7,1);
\draw(2,2)--(3,1)--(5,3)(3,3)--(4,2);
\draw[very thick,red](1.5,0.5)--(4.5,3.5);
\draw[very thick,green](3.5,0.5)--(5,2)--(6,1)--(6.5,1.5);
}
\end{align*}
The left-most vertical trajectory (which is a red trajectory in the figure above) 
in the Dyck tiling cannot be green.
If the first vertical trajectory is green, then we have 
$(n_1,n_2,n_3)=(0,1,0)$, which violates the condition (Q4). 
\end{example}

Let $T$ be a Dyck tiling with bi-colored trajectories
satisfying the conditions (Q1) to (Q4), and $T^{A}$ be the Dyck tiling of the region
without trajectories in $T$.
We are allowed to consider the Dyck tilings such that it may have larger Dyck tiles than the Dyck tiles
in $T^{A}$. The Dyck tiles are within the region without trajectories in $T$.
Then, we define the weight $\mathrm{wt}(T^A)$ of $T^{A}$ by
\begin{align}
\label{eq:wtDA}
\mathrm{wt}(T^A):=\sum_{T'\in \mathcal{T}^A}\prod_{d\in T'}\genfrac{}{}{}{}{[h(d)]}{[h(d)+1]},
\end{align}
where $\mathcal{T}^A$ is the set of Dyck tilings in the region without trajectories in $T$, and 
$d$ is a Dyck tile in $T'\in\mathcal{T}^A$.

Let $t_{r}$ be a red vertical trajectory, and we define the top and bottom heights $h_{t}(t_r)$ 
and $h_{b}(t_r)$ of $t_r$  by the height of the top Dyck tile and the bottom Dyck tile in $t_r$
respectively.
Similarly, $t_{g}$ be a green vertical trajectory. The heights $h_{t}(t_g)$ and $h_{b}(t_g)$ 
are defined in a similar way.

Let $\lambda\le\mu$ be two Dyck paths. 
Let $\mathcal{D}(\lambda,\mu)$ be the set of bi-colored vertical Hermite histories of Dyck tilings 
above $\lambda$ and below $\mu$.
Suppose that we have a Dyck tiling $T^{A}$ in the region without trajectories in the vertical 
Hermite history.
\begin{defn}
We define 
\begin{align}
\label{eq:DTD}
Z'(\lambda,\mu):=\sum_{D\in\mathcal{D}(\lambda,\mu)} 2^{N(t_r)}\mathrm{wt}(T^{A})
\prod_{t_r}\genfrac{}{}{}{}{[h_{t}(t_r)][2 (h_{b}(t_r)-1)]}{[2 h_{t}(t_r)][h_{b}(t_r)-1]}
\prod_{t_g}\genfrac{}{}{}{}{[h_{t}(t_g)][h_{t}(t_g)+1-h_{b}(t_g)]}{[2 h_{t}(t_g)][h_{t}(t_g)+1]},
\end{align}
where $N(t_r)$ be the number of red trajectories $t_r$ such that $h_{b}(t_r)=1$.
When $h_{b}(t_r)=1$, we define $\genfrac{}{}{}{}{[2(h_b(t_r)-1)]}{[h_{b}(t_r)-1]}:=1$.
\end{defn}

Let $D^{\mathrm{odd}}$ be an $n+1$-strand diagram with a single dot, and $\lambda$ is the Dyck path
corresponding to the diagram $\mathtt{For}(D^{\mathrm{odd}})$.
Then, we have the following theorem which is one of the main results.
\begin{theorem} 
\label{thrm:DoddZ}
Let $Z'(\lambda,\mu)$ be as above, and $\mu_{0}$ the top Dyck path of size $n+1$.
The coefficient $\mathrm{coef}(D^{\mathrm{odd}})$ of the diagram $D^{\mathrm{odd}}$ is given
by
\begin{align}
\label{eq:DoddZ}
\mathrm{coef}(D^{\mathrm{odd}})=Z'(\lambda,\mu_{0}).
\end{align} 
\end{theorem}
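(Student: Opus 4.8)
The plan is to prove that the right-hand side $Z'(\lambda,\mu_0)$ satisfies the recurrence relation (\ref{eq:DinDAD}) of Proposition \ref{prop:DinDAD}, and then to conclude by induction on $n$. The base case is $n=1$: the only $2$-strand diagram with a unique dot is $E_0E_1$, whose path is $(UD)^2$; the region below $U^2D^2$ is a single trivial cell at height one carrying one trajectory, which must be green by (Q1), and the green weight $\tfrac{[1]\,[1]}{[2]\,[2]}$ equals $\mathrm{coef}(E_0E_1)=\tfrac1{[2]^2}$. Since (\ref{eq:DinDAD}) together with this initial datum determines $\mathrm{coef}(D^{\mathrm{odd}})$ uniquely, establishing the recurrence for $Z'$ finishes the proof.

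For the inductive step I would follow the strategy of the proof of Theorem \ref{thrm:Deven}. Fix a Dyck tiling $T$ above $\lambda$ and below $\mu_0=U^{n+1}D^{n+1}$ together with a bi-colored vertical Hermite history; let $d$ be the left-most down step of $\mu_0$ and $T^{\mathrm{top}}$ the trajectory ending at $d$. By Proposition \ref{prop:traj} and the non-crossing property, $T^{\mathrm{top}}$ is the left-most trajectory of the history, it consists only of trivial tiles, and it has top height $n$; set $i:=h_b(T^{\mathrm{top}})$, so $1\le i\le n+1$. Exactly as in the type $A$ and even-dot cases, deleting $T^{\mathrm{top}}$ removes an inner-most cap at position $i$ — shrinking by one the non-trivial tile it abuts, if any — and leaves a Dyck tiling of size $n$ below the top path $U^nD^n$. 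The argument then splits according to the colour of $T^{\mathrm{top}}$.

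If $T^{\mathrm{top}}$ is green, then by (Q1)--(Q2) it is the unique green trajectory and there are no red trajectories, so the trajectory-free region is all of $T$ and $2^{N(t_r)}=1$; its green weight, with top height $n$, is $\tfrac{[n]\,[n+1-i]}{[2n]\,[n+1]}=\mathrm{coef}(h_{n,i})$. After deleting $T^{\mathrm{top}}$ (with the usual $i=1$ bookkeeping dictated by (P2b): the unique dot may lie on the position-one inner-most cap, in which case a dot is re-placed on the strand through the left-most bottom point of the reduced diagram, and $\mathrm{coef}(h_{n,0})=\mathrm{coef}(g_{n,1})$), the residual weight $\mathrm{wt}(T^A)$, summed over all admissible tilings of the reduced region below $U^nD^n$, is the type $A$ Dyck-tiling generating function, which by \cite{Shi24} equals the type $A$ coefficient $\mathrm{coef}^A(D^A_i)$; by Proposition \ref{prop:DAD} this is precisely the coefficient of the linear combination $D^A_i$ appearing in (\ref{eq:DinDAD}). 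If instead $T^{\mathrm{top}}$ is red, its weight with top height $n$ is $\tfrac{[n]\,[2(i-1)]}{[2n]\,[i-1]}=\mathrm{coef}(g_{n,i})$, and the count $N(t_r)$ supplies an extra factor $2$ exactly when $i=1$, i.e. $2^{\delta(i\equiv1)}$; deleting $T^{\mathrm{top}}$ is one step of the deletion operation used to prove the equivalence of (Q4) and (Q$4'$), so — the upper path remaining a top path, the unique green trajectory and the reds to its left surviving, and (Q3) untouched — the result is again a bi-colored vertical Hermite history, now of a Dyck tiling whose lower path corresponds to $\mathtt{For}(D^{\mathrm{odd}}_i)$, and by the inductive hypothesis it contributes $\mathrm{coef}(D^{\mathrm{odd}}_i)$. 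Summing over $i\in I(D)$ in both cases reproduces (\ref{eq:DinDAD}).

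The main obstacle I expect lies in the green case: one must check that after deleting the top-level green trajectory no residual bi-colouring constraint remains, so that $\mathrm{wt}(\cdot)$ genuinely collapses to the plain type $A$ generating function of \cite{Shi24}, and one must treat with care the boundary situation $D_i=\mathbf 1$ (where $\mathcal D^e_i=\{\mathbf 1\}$ and $D^{\mathrm{odd}}_i=\emptyset$) together with the dot-placement bookkeeping forced by (P2b) when $i=1$. Secondary points are verifying, as in the even case, that the left-most top-level trajectory can only pass through trivial tiles, and confirming the consistency at position one of $2^{N(t_r)}$ with the coincidence $\mathrm{coef}(h_{n,0})=\mathrm{coef}(g_{n,1})$.
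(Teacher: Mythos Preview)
Your proposal is correct and follows essentially the same route as the paper's own proof: both arguments show that $Z'(\lambda,\mu_0)$ satisfies the recurrence (\ref{eq:DinDAD}) by examining the left-most vertical trajectory $T_d$ ending at the first down step of $\mu_0$, splitting on its colour, and reading off the factor $\mathrm{coef}(h_{n,i})$ (green) or $2^{\delta(i\equiv 1)}\mathrm{coef}(g_{n,i})$ (red) from the trajectory weight before passing to the reduced tiling. The obstacles you flag---that in the green case the residual collapses to the plain type~$A$ generating function, that the left-most trajectory sits on trivial tiles only, and the $i=1$ bookkeeping---are precisely the points the paper treats, the first via the equivalence (Q4)$\Leftrightarrow$(Q$4'$) and the discussion of $\mathrm{wt}(T^A)$, so there is no substantive difference in method.
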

\begin{proof}
We will show that the generating function $Z'(\lambda,\mu_{0})$ satisfies 
the recurrence relation (\ref{eq:DinDAD}) in Proposition \ref{prop:DinDAD}
if a bi-colored vertical Hermite history for $Z'(\lambda,\mu_0)$ satisfies the conditions from (Q1) to (Q4).
As in the proof of Theorem \ref{thrm:Deven}, we consider the trajectory 
$T_{d}$ which is left-most in the bi-colored vertical Hermite history. 
We first show that a bi-colored vertical Hermite history satisfies the conditions (Q1) to (Q3).
We consider two cases: 1) $T_{d}$ is a green trajectory, and
2) $T_{d}$ is a red trajectory.
\begin{enumerate}
\item Since the color of $T_{d}$ is green, we have no green or red trajectories right to $T_{d}$.
We may have a Dyck tiling without colored trajectories right to $T_{d}$.
From Eq. (\ref{eq:DinDAD}), this corresponds to the case such that $T_{d}$ gives the factor
$\mathrm{coef}(h_{n,i})$, and the region without colored trajectories corresponds to the 
generating function of Dyck tilings of type $A$.
This is achieved by choosing $i\in I(D)$, and the first sum in the right hand side of Eq. (\ref{eq:DinDAD})
is given. 
\item Since the color of $T_{d}$ is red, we have red trajectories and a unique right-most green trajectory 
right to $T_{d}$.
The recurrence relation (\ref{eq:DinDAD}) implies that if we have a coefficient $\mathrm{coef}(g_{n,i})$, 
then we have a diagram $D_{i}^{\mathrm{odd}}$. Since this diagram $D_{i}^{\mathrm{odd}}$ 
contains a unique dot, we apply the recurrence relation (\ref{eq:DinDAD}) once again. 
This procedure implies that unless we have 
a coefficient $\mathrm{coef}(g_{n,i})$, we recursively apply the recurrence relation (\ref{eq:DinDAD}).
This stops when we have $\mathrm{coef}(h_{n,i})$ and produces the diagram $D^{A}_{i}$.
This means that there exists a unique green trajectory $T_{g}$, and the trajectories left to 
$T_{g}$ are all red. Then, the region right to the trajectory $T_{g}$ does not have colored trajectories.
We have a factor $2$ when $i=1\in I(D)$, which comes from the fact that we have 
$\mathrm{coef}(h_{n,0})=\mathrm{coef}(g_{n,1})$ in Eq. (\ref{eq:Q2}) in Proposition \ref{prop:Q1}.
By combining these together, we have the second term in Eq. (\ref{eq:DinDAD}).
\end{enumerate}
From these, a bi-colored vertical Hermite history in the generating function 
satisfies the conditions (Q1) to (Q3).

Below, we show that a bi-colored vertical Hermite history satisfies the condition (Q4).
Consider the first sum in the recurrence relation (\ref{eq:DinDAD}).
Since $D^{A}_{i}$ is obtained from $D$ by removing an inner-most cap, 
the size of $D^{A}_{i}$ is one smaller than that of $D$.
Recall that the diagram $D^{A}_{i}$ is of type $A$.
If the lower Dyck path corresponding to $D^{A}_{i}$ is $\lambda$, we have to 
consider all the Dyck tilings above $\lambda$ and below the top Dyck path.

The expression (\ref{eq:wtDA}) implies that we need to have the Dyck tiling $T$
which consists of only trivial Dyck tiles.  
If the top and the bottom Dyck paths are given, we have a unique Dyck tiling with only trivial Dyck tiles,
and other Dyck tilings are with non-trivial Dyck tiles.
Such other Dyck tilings are obtained from $T$ by merging Dyck tiles into a larger Dyck tile.

Suppose that a Dyck tiling $T'$ has $m\ge0$ red trajectories and a unique green trajectory.
Then, since the size of the Dyck tiling $T'$ is $n+1$, the size of the region without colored trajectories
is to be $n+1-(m+1)=n-m$.
The observation in the previous paragraph implies that if we perform the deletion of the Dyck tiling $T'$ $m+1$ times, 
then we need to have a Dyck tiling which consists of only trivial Dyck tiles.
This is nothing but (Q$4'$).
By the equivalence between (Q4) and (Q$4'$), a bi-colored vertical Hermite 
history satisfies the condition (Q4).
On the contrary, suppose that a bi-colored vertical Hermite history violates the condition (Q4). 
Then, the region without colored trajectories consists of a Dyck tiling with 
a non-trivial Dyck tile after the $m+1$ deletions.
In this case, we can not have the Dyck tiling consisting $T$ of only trivial Dyck tiles
when we calculate the weight (\ref{eq:wtDA}) of $D^{A}_{i}$.
From these observations, it is clear that bi-colored vertical
Hermite histories in the generating function contain only the trajectories which satisfy the condition (Q4).

As a consequence, a bi-colored vertical Hermite history in $Z'(\lambda,\mu_{0})$ 
satisfies the conditions from (Q1) to (Q4), and 
$Z'(\lambda,\mu_0)$ also satisfies the recurrence relation (\ref{eq:DinDAD}).
These imply Eq. (\ref{eq:DoddZ}), which completes the proof.
\end{proof}

\begin{example}
We calculate the coefficient of $E_0E_1$ in $Q_3$.
The diagram for $E_0E_1$ and its corresponding top and bottom Dyck paths are as follows.
\begin{align*}
\tikzpic{-0.5}{[scale=0.6]
\draw(0,0)..controls(0,1)and (1,1)..(1,0)(0,2)..controls(0,1)and(1,1)..(1,2);
\draw(0.5,0.72)node{$\bullet$};
\draw(2,0)--(2,2)(3,0)--(3,2);
}\Longleftrightarrow
\tikzpic{-0.5}{[scale=0.4]
\draw(0,0)--(1,1)--(2,0)--(4,2)--(6,0)--(7,1)--(8,0);
\draw(1,1)--(4,4)--(7,1);
\draw(2,2)--(3,1)(3,3)--(4,2)--(5,3)(5,1)--(6,2);
}
\end{align*}

We have six bi-colored vertical Hermite histories.
\begin{align*}
\tikzpic{-0.5}{[scale=0.3]
\draw(0,0)--(1,1)--(2,0)--(4,2)--(6,0)--(7,1)--(8,0);
\draw(1,1)--(4,4)--(7,1);
\draw(2,2)--(3,1)(3,3)--(4,2)--(5,3)(5,1)--(6,2);
\draw[very thick,green](1.5,0.5)--(4.5,3.5);
}\quad
\tikzpic{-0.5}{[scale=0.3]
\draw(0,0)--(1,1)--(2,0)--(4,2)--(6,0)--(7,1)--(8,0);
\draw(1,1)--(4,4)--(7,1);
\draw(2,2)--(3,1)(3,3)--(4,2)--(5,3)(5,1)--(6,2);
\draw[very thick,red](1.5,0.5)--(4.5,3.5);
\draw[very thick,green](4.5,1.5)--(5.5,2.5);
}\quad
\tikzpic{-0.5}{[scale=0.3]
\draw(0,0)--(1,1)--(2,0)--(4,2)--(6,0)--(7,1)--(8,0);
\draw(1,1)--(4,4)--(7,1);
\draw(2,2)--(3,1)(3,3)--(4,2)--(5,3)(5,1)--(6,2);
\draw[very thick,red](1.5,0.5)--(4.5,3.5);
\draw[very thick,red](4.5,1.5)--(5.5,2.5);
\draw[very thick,green](5.5,0.5)--(6.5,1.5);
}\\
\tikzpic{-0.5}{[scale=0.3]
\draw(0,0)--(1,1)--(2,0)--(4,2)--(6,0)--(7,1)--(8,0);
\draw(1,1)--(4,4)--(7,1);
\draw(2,2)--(3,1)(5,1)--(6,2);
\draw[very thick,green](1.5,0.5)--(4,3)--(5,2)--(5.5,2.5);
}\quad
\tikzpic{-0.5}{[scale=0.3]
\draw(0,0)--(1,1)--(2,0)--(4,2)--(6,0)--(7,1)--(8,0);
\draw(1,1)--(4,4)--(7,1);
\draw(2,2)--(3,1)(5,1)--(6,2);
\draw[very thick,red](1.5,0.5)--(4,3)--(5,2)--(5.5,2.5);
\draw[very thick,green](5.5,0.5)--(6.5,1.5);
}\quad
\tikzpic{-0.5}{[scale=0.3]
\draw(0,0)--(1,1)--(2,0)--(4,2)--(6,0)--(7,1)--(8,0);
\draw(1,1)--(4,4)--(7,1);
\draw[very thick,green](1.5,0.5)--(4,3)--(6,1)--(6.5,1.5);
}
\end{align*}
The sum of the weights of the above diagrams is given by
\begin{align*}
\genfrac{}{}{}{}{[3]^{2}}{[6][4]}\genfrac{}{}{}{}{[2]}{[3]}\genfrac{}{}{}{}{1}{[2]}
+2\genfrac{}{}{}{}{[3]}{[6]}\genfrac{}{}{}{}{[2]}{[4][3]}\genfrac{}{}{}{}{1}{[2]}
+2\genfrac{}{}{}{}{[3]}{[6]}\genfrac{}{}{}{}{[2][2]}{[4][1]}\genfrac{}{}{}{}{1}{[2]^2}
+\genfrac{}{}{}{}{[2]^2}{[4][3]}\genfrac{}{}{}{}{1}{[2]}
+2 \genfrac{}{}{}{}{[2]}{[4]}\genfrac{}{}{}{}{1}{[2]^2}+\genfrac{}{}{}{}{1}{[2]^2}
=\genfrac{}{}{}{}{[3]^3}{[6][4]}.
\end{align*}
\end{example}

\begin{example}
\label{ex:013}
We calculate the coefficient of $E_0E_1E_3$ in $Q_3$. 
The diagram for $E_0E_1E_3$ and its corresponding top and bottom Dyck paths are 
given by 
\begin{align*}
E_0E_1E_3 \Longleftrightarrow
\tikzpic{-0.5}{[scale=0.6]
\draw(0,0)..controls(0,1)and(1,1)..(1,0)(0,2)..controls(0,1)and(1,1)..(1,2);
\draw(0.5,0.72)node{$\bullet$};
\draw(2,0)..controls(2,1)and(3,1)..(3,0)(2,2)..controls(2,1)and(3,1)..(3,2);
}\Longleftrightarrow
\tikzpic{-0.5}{[scale=0.4]
\draw(0,0)--(1,1)--(2,0)--(3,1)--(4,0)--(5,1)--(6,0)--(7,1)--(8,0);
\draw(1,1)--(4,4)--(7,1);
\draw(2,2)--(3,1)--(4,2)--(5,1)(3,3)--(4,2)--(5,3)(5,1)--(6,2);
}
\end{align*}
We have eight bi-colored  vertical Hermite histories.
\begin{align}
\label{eq:8vHh}
\begin{aligned}
&\tikzpic{-0.5}{[scale=0.4]
\draw(0,0)--(1,1)--(2,0)--(3,1)--(4,0)--(5,1)--(6,0)--(7,1)--(8,0);
\draw(1,1)--(4,4)--(7,1);
\draw(2,2)--(3,1)--(4,2)--(5,1)(3,3)--(4,2)--(5,3)(5,1)--(6,2);
\draw[very thick,green](1.5,0.5)--(4.5,3.5);
}\quad
\tikzpic{-0.5}{[scale=0.4]
\draw(0,0)--(1,1)--(2,0)--(3,1)--(4,0)--(5,1)--(6,0)--(7,1)--(8,0);
\draw(1,1)--(4,4)--(7,1);
\draw(2,2)--(3,1)--(4,2)--(5,1)(3,3)--(4,2)--(5,3)(5,1)--(6,2);
\draw[very thick,red](1.5,0.5)--(4.5,3.5);
\draw[very thick,green](3.5,0.5)--(5.5,2.5);
}\quad
\tikzpic{-0.5}{[scale=0.4]
\draw(0,0)--(1,1)--(2,0)--(3,1)--(4,0)--(5,1)--(6,0)--(7,1)--(8,0);
\draw(1,1)--(4,4)--(7,1);
\draw(2,2)--(3,1)--(4,2)--(5,1)(3,3)--(4,2)--(5,3)(5,1)--(6,2);
\draw[very thick,red](1.5,0.5)--(4.5,3.5);
\draw[very thick,red](3.5,0.5)--(5.5,2.5);
\draw[very thick,green](5.5,0.5)--(6.5,1.5);
} \\
&\tikzpic{-0.5}{[scale=0.4]
\draw(0,0)--(1,1)--(2,0)--(3,1)--(4,0)--(5,1)--(6,0)--(7,1)--(8,0);
\draw(1,1)--(4,4)--(7,1);
\draw(3,3)--(5,1)--(6,2)(5,3)--(4,2);
\draw[very thick,green](3.5,2.5)--(4.5,3.5);
}\quad
\tikzpic{-0.5}{[scale=0.4]
\draw(0,0)--(1,1)--(2,0)--(3,1)--(4,0)--(5,1)--(6,0)--(7,1)--(8,0);
\draw(1,1)--(4,4)--(7,1);
\draw(3,3)--(5,1)--(6,2)(5,3)--(4,2);
\draw[very thick,red](3.5,2.5)--(4.5,3.5);
\draw[very thick,green](1.5,0.5)--(3,2)--(4,1)--(5.5,2.5);
}\quad
\tikzpic{-0.5}{[scale=0.4]
\draw(0,0)--(1,1)--(2,0)--(3,1)--(4,0)--(5,1)--(6,0)--(7,1)--(8,0);
\draw(1,1)--(4,4)--(7,1);
\draw(3,3)--(5,1)--(6,2)(5,3)--(4,2);
\draw[very thick,red](3.5,2.5)--(4.5,3.5);
\draw[very thick,red](1.5,0.5)--(3,2)--(4,1)--(5.5,2.5);
\draw[very thick,green](5.5,0.5)--(6.5,1.5);
}\\
&\tikzpic{-0.5}{[scale=0.4]
\draw(0,0)--(1,1)--(2,0)--(3,1)--(4,0)--(5,1)--(6,0)--(7,1)--(8,0);
\draw(1,1)--(4,4)--(7,1);
\draw(2,2)--(3,1)--(5,3)(3,3)--(4,2);
\draw[very thick,red](1.5,0.5)--(4.5,3.5);
\draw[very thick,green](3.5,0.5)--(5,2)--(6,1)--(6.5,1.5);
}\quad
\tikzpic{-0.5}{[scale=0.4]
\draw(0,0)--(1,1)--(2,0)--(3,1)--(4,0)--(5,1)--(6,0)--(7,1)--(8,0);
\draw(1,1)--(4,4)--(7,1);
\draw(3,3)--(4,2)--(5,3);
\draw[very thick,red](3.5,2.5)--(4.5,3.5);
\draw[very thick,green](1.5,0.5)--(3,2)--(4,1)--(5,2)--(6,1)--(6.5,1.5);
}
\end{aligned}
\end{align}
The sum of the weight is given by
\begin{align*}
&\genfrac{}{}{}{}{[3]^2}{[6][4]}\left(\genfrac{}{}{}{}{1}{[3][2]}+\genfrac{}{}{}{}{1}{[2]}\right)
+2\genfrac{}{}{}{}{[3]}{[6]}\genfrac{}{}{}{}{[2]^2}{[4][3]}\genfrac{}{}{}{}{1}{[2]}
+2^{2}\genfrac{}{}{}{}{[3]}{[6]}\genfrac{}{}{}{}{[2]}{[4]}\genfrac{}{}{}{}{1}{[2]^2} \\
&+\genfrac{}{}{}{}{[3]}{[6][4]}\left(\genfrac{}{}{}{}{1}{[3][2]}+\genfrac{}{}{}{}{1}{[2]}\right)
+\genfrac{}{}{}{}{[3][4]}{[6][2]}\genfrac{}{}{}{}{[2]^2}{[4][3]}\genfrac{}{}{}{}{1}{[2]}
+2\genfrac{}{}{}{}{[3][4]}{[6][2]}\genfrac{}{}{}{}{[2]}{[4]}\genfrac{}{}{}{}{1}{[2]^2} \\
&+2\genfrac{}{}{}{}{[3]}{[6]}\genfrac{}{}{}{}{1}{[2]^2}
+\genfrac{}{}{}{}{[3][4]}{[6][2]}\genfrac{}{}{}{}{1}{[2]^2} \\
&=\genfrac{}{}{}{}{[2]^3}{[6][4]}\left([2]^2+1\right).
\end{align*}

There are two non-admissible bi-colored Hermite histories of Dyck tilings:
\begin{align}
\label{eq:naDyck}
\tikzpic{-0.5}{[scale=0.4]
\draw(0,0)--(1,1)--(2,0)--(3,1)--(4,0)--(5,1)--(6,0)--(7,1)--(8,0);
\draw(1,1)--(4,4)--(7,1);
\draw(2,2)--(3,1)--(5,3)(3,3)--(4,2);
\draw[very thick,green](1.5,0.5)--(4.5,3.5);
}\quad
\tikzpic{-0.5}{[scale=0.4]
\draw(0,0)--(1,1)--(2,0)--(3,1)--(4,0)--(5,1)--(6,0)--(7,1)--(8,0);
\draw(1,1)--(4,4)--(7,1);
\draw(3,3)--(4,2)--(5,3);
\draw[very thick,green](3.5,2.5)--(4.5,3.5);
}
\end{align}
The both bi-colored Hermite histories have three (removed) trajectories 
right to the green trajectory.
Then, the numbers of Dyck tiles in these three trajectories are  
$(n_1,n_2,n_3)=(0,1,0)$ from left to right if we apply the deletion of the Dyck tilings.
They are not admissible since $n_1<n_2$ violates (Q4).	
\end{example}

In what follows, we consider the practical reason why the condition (Q4) on a 
bi-colored vertical Hermite history 
is necessary with an example.

For simplicity, we consider the two non-admissible Dyck tilings in Eq. (\ref{eq:naDyck})
of Example \ref{ex:013}.
The left non-admissible Dyck tiling: 
The green vertical trajectory corresponds to the coefficient $\mathrm{coef}(h_{3,1})$.
Then, diagrammatically, we calculate  
\begin{align}
\label{eq:redDyck}
\tikzpic{-0.5}{[scale=0.5]
\draw[green](0,0)..controls(0,1)and(1,1)..(1,0);
\draw(0,2)..controls(0,1)and(1,1)..(1,2);
\draw(2,0)..controls(2,1)and(3,1)..(3,0)(2,2)..controls(2,1)and(3,1)..(3,2);
}_{D}
\longrightarrow \mathrm{coef}(h_{3,1}) \cdot 
\tikzpic{-0.5}{[scale=0.5]
\draw(0,0)..controls(0,1)and(1,1)..(1,0);
\draw(0,2)..controls(0,1)and(1,1)..(1,2);
\draw(2,0)--(2,2);
}_{A},
\end{align}
where the green cap corresponds to the green trajectory.
The right-hand side of Eq. (\ref{eq:redDyck}) implies that 
we have a diagram of type $A$ with three strands.
We compare this situation with the left Dyck tiling in Eq. (\ref{eq:naDyck}).
This Dyck tiling contains three trivial Dyck tiles on the green trajectory, 
and a unique non-trivial Dyck tile of size $1$.
This non-trivial Dyck tile has no trajectory, and this means that 
this tile corresponds to type $A$. 
The region of type $A$ corresponds to a single tile at height $1$ since this tile
is a non-trivial Dyck tile. 
By shrinking the Dyck tile, it is easy to see that this Dyck tile corresponds to 
the following diagram of type $A$:
\begin{align*}
\tikzpic{-0.5}{[scale=0.5]
\draw(0,0)..controls(0,1)and(1,1)..(1,0);
\draw(0,2)..controls(0,1)and(1,1)..(1,2);
}_{A}
\end{align*} 
This contains only two strands and does not fit to the situation in Eq. (\ref{eq:redDyck}).

The right Dyck tiling in Eq. (\ref{eq:naDyck}) is not admissible in the same reason.

On the contrary, we consider the left-most admissible Dyck tiling in the second row
in Eq. (\ref{eq:8vHh}).
The green trajectory corresponds to $\mathrm{coef}(h_{3,3})$. 
As a diagram, we calculate 
\begin{align}
\label{eq:redDyck2}
\tikzpic{-0.5}{[scale=0.5]
\draw(0,0)..controls(0,1)and(1,1)..(1,0);
\draw(0,2)..controls(0,1)and(1,1)..(1,2);
\draw[green](2,0)..controls(2,1)and(3,1)..(3,0);
\draw(2,2)..controls(2,1)and(3,1)..(3,2);
}_{D}
\longrightarrow \mathrm{coef}(h_{3,3}) \cdot 
\tikzpic{-0.5}{[scale=0.5]
\draw(0,0)..controls(0,1)and(1,1)..(1,0);
\draw(0,2)..controls(0,1)and(1,1)..(1,2);
\draw(2,0)--(2,2);
}_{A}.
\end{align}
The Dyck tiling contains one Dyck tile with a green trajectory, and 
three Dyck tiles without trajectories.
These three Dyck tiles consist of two Dyck tiles of height $1$ and 
one Dyck tiles of height $2$.
This diagram of type $A$ indeed corresponds to the diagram in the right-hand side
of Eq. (\ref{eq:redDyck2}). 
Therefore, this Dyck tiling is admissible.

\section{Some explicit expressions of the coefficients}
\label{sec:ex}
In this section, we give some explicit expressions of the coefficients of 
the elements in Jones--Wenzl projections. 
We write $\mathrm{coef}^{n}(A)$ as a coefficient of $A$ in the projection $Q_{n}$.

We first study the symmetry of the coefficients of the diagram in 
the Jones--Wenzl projection $Q_n$.

Let $D$ be an $n+1$-strand Temperley--Lieb diagram in $\mathrm{TL}^{D}_{n+1}$.
The diagram $D$ has an algebraic representation, that is, $D$ corresponds 
to the word $E_{D}:=E_{s_1}\cdots E_{s_{m}}$ where $E_{D}$ is a reduced expression.
Given a word $E_{D}$, we introduce a transposed word 
$\overline{E_{D}}:=E_{s_{m}}\cdots E_{s_1}$.
\begin{prop}
\label{prop:barE}
Let $E_D$ and $\overline{E_{D}}$ be words as above.
Then, we have 
$\mathrm{coef}^{n}(E_{D})=\mathrm{coef}^{n}(\overline{E_{D}})$.
Further, the coefficient of $D$ in $Q_{n}$ is equal to that of $\overline{D}$, where
$\overline{D}$ is the upside down diagram obtained from $D$ (We change the position of 
a unique dot in $\overline{D}$ to satisfy the property (P2b) if $\overline{D}$ has a unique dot.). 
\end{prop}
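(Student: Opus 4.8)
The plan is to derive both assertions from a length-preserving anti-involution of $\mathrm{TL}^{D}_{n+1}$ that fixes each generator. First I would introduce the $\mathbb{C}(q)$-linear map $\ast$ sending a word $E_{s_1}\cdots E_{s_m}$ to $E_{s_m}\cdots E_{s_1}$ and check that it is a well-defined algebra anti-automorphism: every defining relation of $\mathrm{TL}^{D}_{n+1}$ is invariant under reversing products, since $E_i^{2}=-[2]E_i$ and $E_iE_jE_i=E_i$ reverse to themselves, while $E_iE_j=E_jE_i$ reverses to the same commutation, so $\ast$ descends to the quotient. Because $\ast$ is an involution it carries a reduced expression to a reduced expression, so it permutes the basis $\{E_D\}$ of $\mathrm{TL}^{D}_{n+1}$ indexed by diagrams; in particular $\mathbf{1}^{\ast}=\mathbf{1}$, and for each diagram $D$ the word $\overline{E_D}=E_D^{\ast}$ is the reduced word of a unique Temperley--Lieb diagram.

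Next I would show $Q_n^{\ast}=Q_n$ by applying $\ast$ to the three conditions characterizing $Q_n$. Applying $\ast$ gives $Q_n^{\ast}\neq 0$, and $E_iQ_n=Q_nE_i=0$ becomes $Q_n^{\ast}E_i=E_iQ_n^{\ast}=0$ for $0\le i\le n$; moreover, since $\ast$ fixes $\mathbf{1}$, the coefficient of $\mathbf{1}$ in $Q_n^{\ast}$ remains $1$. For the normalization condition (2), I would induct on $n$, using $Q_0^{\ast}=Q_0$ (immediate from $Q_0=1+\tfrac{1}{[2]}E_0$) and the inductive hypothesis $Q_i^{\ast}=Q_i$ for $i<n$ to get $Q_n^{\ast}Q_i=Q_iQ_n^{\ast}=Q_n^{\ast}$. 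By the uniqueness of $Q_n$ this forces $Q_n^{\ast}=Q_n$. Expanding $Q_n=\sum_D\mathrm{coef}^{n}(E_D)E_D$, applying $\ast$, and comparing coefficients over the permuted basis then yields $\mathrm{coef}^{n}(E_D)=\mathrm{coef}^{n}(\overline{E_D})$.

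Finally I would identify $\overline{E_D}$ with the upside-down diagram $\overline{D}$: reversing the product order of the generators reflects the diagram top-to-bottom, so up to the placement of the dot the diagram of $\overline{E_D}$ is $\overline{D}$. Since $\ast$ fixes every generator, $\overline{E_D}$ contains the product $E_0E_1$ exactly when $E_D$ does, hence $\overline{D}$ carries a unique dot exactly when $D$ does, and the conventions (P1) and (P2) then force the dot onto the appropriate cap or left-most vertical strand, which is precisely the normalization in the parenthetical remark of the statement. Reading the coefficient identity through this identification gives $\mathrm{coef}^{n}(D)=\mathrm{coef}^{n}(\overline{D})$. I expect the only point requiring genuine care to be this last step --- tracking the single dot through the reflection and re-normalizing by (P2b); the algebraic core of the argument is merely the reversal-symmetry of the defining relations.
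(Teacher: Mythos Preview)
Your argument is correct and complete. The core idea---defining the reversal anti-involution $\ast$, checking it preserves the defining relations, and then invoking the uniqueness of $Q_n$ to conclude $Q_n^{\ast}=Q_n$---is sound, and your treatment of the dot placement via (P2b) is exactly what is needed for the second assertion.

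The paper takes a different route: rather than appealing to the axiomatic characterization of $Q_n$, it applies the reversal to the explicit recurrence (\ref{eq:Q2}) of Proposition~\ref{prop:Q1}, obtaining a mirrored recurrence $Q_{n+1}=\bigl(\sum_i\mathrm{coef}(g_{n+1,i})\overline{g_{n+1,i}}+\sum_j\mathrm{coef}(h_{n+1,j})\overline{h_{n+1,j}}\bigr)Q_{n}$ and reading off the coefficient symmetry from there (implicitly using that the original recurrence (\ref{eq:recQ1}) is manifestly $\ast$-symmetric, so $Q_n^{\ast}=Q_n$ by induction). Your approach is cleaner and more self-contained: it works directly from the defining properties and needs none of the structural results of Section~\ref{sec:rr}. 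The paper's approach, by contrast, ties the symmetry into the recursive machinery that drives the rest of the paper, which is natural in context but logically heavier for this particular statement.
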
 
\begin{proof}
The recurrence relation (\ref{eq:Q2}) in Proposition \ref{prop:Q1} can be 
rewritten as 
\begin{align*}
Q_{n+1}=\left(\sum_{i=1}^{n+2}\mathrm{coef}(g_{n+1,i})\overline{g_{n+1,i}}
+\sum_{j=0}^{n+1}\mathrm{coef}(h_{n+1,j})\overline{h_{n+1,j}}
\right)Q_{n-1}.
\end{align*}
This implies that $\mathrm{coef}(E_{D})=\mathrm{coef}(\overline{E_D})$.
Since the diagram $\overline{D}$ is nothing but a diagram for $\overline{E_D}$,
the second statement follows.
\end{proof}

Let $E_{D}:=E_{s_1}\ldots E_{s_{m}}\in\mathrm{TL}^{D}_{n+1}$ be a word of $E_{i}$, $0\le i\le n$.
We define an involution $\theta:E_{D}\mapsto E_{D'}$ where $E_{D'}$ is obtained from 
$E_{D}$  by replacing $E_{0}$ by $E_1$ and $E_{1}$ by $E_{0}$ in $E_{D}$.
 
\begin{prop}
\label{prop:theta}
We have $\mathrm{coef}^{n}(E_{D})=\mathrm{coef}^{n}(E_{D'})$.
\end{prop}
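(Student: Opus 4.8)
The plan is to identify the involution $\theta$ with the algebra automorphism of $\mathrm{TL}^{D}_{n+1}$ coming from the nontrivial symmetry of the Dynkin diagram $D_{n+1}$, and then to check that this automorphism fixes $Q_{n}$. First I would observe that the transposition $\sigma$ exchanging the nodes $0$ and $1$ is an automorphism of $D_{n+1}$: both $0$ and $1$ are adjacent only to $2$, while every other node is fixed, so the adjacency relation on nodes is preserved. Since each of the three families of defining relations of $\mathrm{TL}^{D}_{n+1}$ depends only on which pairs of nodes are adjacent, the assignment $E_{i}\mapsto E_{\sigma(i)}$ respects all of them and therefore extends to an algebra automorphism; this automorphism is precisely the map $\theta$ in the statement, it is involutive because $\sigma^{2}=\mathrm{id}$, and it is defined compatibly on $\mathrm{TL}^{D}_{m}$ for every $m\ge 2$. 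Being an automorphism, $\theta$ carries a reduced expression to a reduced expression, hence it carries the diagram underlying $E_{s_{1}}\cdots E_{s_{m}}$ to the diagram underlying $E_{\sigma(s_{1})}\cdots E_{\sigma(s_{m})}$; in particular $\theta$ restricts to an involution of the finite set of Temperley--Lieb diagrams with dots and fixes the identity diagram $\mathbf{1}$.

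Next I would show $\theta(Q_{n})=Q_{n}$ by induction on $n$. For $n=1$ this is immediate from $Q_{1}=1+\tfrac{1}{[2]}(E_{0}+E_{1})+\tfrac{1}{[2]^{2}}E_{0}E_{1}$ together with $E_{1}E_{0}=E_{0}E_{1}$. For $n\ge 2$ I would apply $\theta$ to the recurrence~(\ref{eq:recQ1}) of Proposition~\ref{prop:recQ1}: the scalars $A_{n},B_{n}$ are unchanged, $\theta(E_{n})=E_{n}$ since $n\notin\{0,1\}$, $\theta(Q_{n-1})=Q_{n-1}$ by the induction hypothesis (node $1$ still lies in $D_{n}$ because $n\ge 2$), and $\theta(E_{w_{n}})=E_{n}\cdots E_{2}E_{1}E_{0}E_{2}\cdots E_{n}=E_{w_{n}}$ because $0$ and $1$ are non-adjacent, so $E_{1}E_{0}=E_{0}E_{1}$. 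Hence $\theta$ fixes every term on the right-hand side of~(\ref{eq:recQ1}), giving $\theta(Q_{n})=Q_{n}$. (Equivalently, one can check that $\theta(Q_{n})$ again satisfies the three defining conditions of the Jones--Wenzl projection of type $D$---condition (2) using the induction hypothesis and the idempotency of $Q_{n}$, condition (3) because $\{\sigma(0),\dots,\sigma(n)\}=\{0,\dots,n\}$---and invoke uniqueness.)

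To conclude, I would expand $Q_{n}=\sum_{D}\mathrm{coef}^{n}(E_{D})\,D$ over Temperley--Lieb diagrams $D$. Applying $\theta$, using the first step so that $\sum_{D}\mathrm{coef}^{n}(E_{D})\,\theta(D)$ is merely a reindexing of the sum, together with $\theta(Q_{n})=Q_{n}$, and comparing coefficients gives $\mathrm{coef}^{n}(E_{D})=\mathrm{coef}^{n}$ of the diagram $\theta(D)$ for every $D$; since $\theta(D)$ is exactly the diagram underlying the word $\theta(E_{D})=E_{D'}$, this is the asserted equality. I do not expect a genuine obstacle: the proof is in essence the single observation that $\theta$ is a diagram automorphism. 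The only points that need a little care are verifying that $\theta$ descends to a bijection of the set of dotted Temperley--Lieb diagrams (and not merely of the algebra), checking the compatibility of the various $\theta$'s along the chain $\mathrm{TL}^{D}_{2}\subset\mathrm{TL}^{D}_{3}\subset\cdots$ invoked in the induction, and recording that the statement is vacuous for $n=0$, where there is no node $1$.
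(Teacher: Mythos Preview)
Your proposal is correct and follows essentially the same approach as the paper: the paper's proof consists of the single sentence that $\theta$ is the involution of the Dynkin diagram $D_{n+1}$ swapping the labels $0$ and $1$, from which the statement ``immediately follows.'' You have simply filled in the details the paper leaves implicit---that a diagram automorphism induces an algebra automorphism, that this automorphism fixes $Q_{n}$ (either by induction on the recurrence or by uniqueness of the projection), and that comparing coefficients then gives the result.
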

\begin{proof}
Note that $\theta$ is an involution of the Dynkin diagram $D_{n+1}$ such that 
we exchange the labels $0$ and $1$ and fix other labels.
The statement immediately follows from this fact.
\end{proof}

We introduce the following summation formula which is used later.
\begin{lemma}
\label{lemma:df}
We have 
\begin{align*}
\sum_{k=j+1}^{n}\genfrac{}{}{}{}{[k][k-1]}{[2k][2(k-1)]}=\genfrac{}{}{}{}{[n][j][n-j]}{[2n][2j]}.
\end{align*}
\end{lemma}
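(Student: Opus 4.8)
The plan is to reduce the identity to an elementary telescoping sum. First I would introduce the shorthand $\{k\}:=q^{k}+q^{-k}$ and record the factorization $[2k]=[k]\{k\}$, which is immediate from $q^{2k}-q^{-2k}=(q^{k}-q^{-k})(q^{k}+q^{-k})$. With this, the summand on the left simplifies as $\dfrac{[k][k-1]}{[2k][2(k-1)]}=\dfrac{1}{\{k\}\{k-1\}}$, while the right-hand side becomes $\dfrac{[n][j][n-j]}{[2n][2j]}=\dfrac{[n-j]}{\{n\}\{j\}}$. So it suffices to prove
\[
\sum_{k=j+1}^{n}\frac{1}{\{k\}\{k-1\}}=\frac{[n-j]}{\{n\}\{j\}}.
\]

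Next I would produce a discrete antiderivative for the summand. I claim the right choice is $g(k):=\dfrac{[k-j]}{\{k\}\{j\}}$ (with $j$ held fixed), and that $g(k)-g(k-1)=\dfrac{1}{\{k\}\{k-1\}}$. Clearing the common denominator $\{k\}\{k-1\}\{j\}$, this is equivalent to the scalar identity $[k-j]\{k-1\}-[k-1-j]\{k\}=\{j\}$. To check it I would use the elementary relation $[a]\{b\}=[a+b]+[a-b]$, which follows at once by expanding $\frac{q^{a}-q^{-a}}{q-q^{-1}}(q^{b}+q^{-b})$. It gives $[k-j]\{k-1\}=[2k-j-1]+[1-j]=[2k-j-1]-[j-1]$ and $[k-1-j]\{k\}=[2k-j-1]+[-1-j]=[2k-j-1]-[j+1]$; subtracting, the terms $[2k-j-1]$ cancel and we are left with $[j+1]-[j-1]$, and a one-line computation gives $[j+1]-[j-1]=q^{j}+q^{-j}=\{j\}$, as needed.

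Finally, telescoping the sum yields
\[
\sum_{k=j+1}^{n}\frac{1}{\{k\}\{k-1\}}=g(n)-g(j)=\frac{[n-j]}{\{n\}\{j\}}-\frac{[0]}{\{j\}^{2}}=\frac{[n-j]}{\{n\}\{j\}},
\]
since $[0]=0$. Rewriting $\dfrac{[n-j]}{\{n\}\{j\}}$ back as $\dfrac{[n][j][n-j]}{[2n][2j]}$ via $[2n]=[n]\{n\}$ and $[2j]=[j]\{j\}$ gives the claimed formula. The only genuinely nontrivial step is guessing the antiderivative $g(k)$; once the product rule $[a]\{b\}=[a+b]+[a-b]$ is available, everything reduces to short manipulations of $q$-integers. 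One should also note the minor edge cases: when $n=j$ the identity reads $0=0$, and the argument is unaffected when $j=0$ (where $\{0\}=2$).
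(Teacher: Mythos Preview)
Your proof is correct and takes essentially the same approach as the paper, which proves the identity by induction on $n$; the inductive step is precisely your telescoping difference $g(n)-g(n-1)=\dfrac{1}{\{n\}\{n-1\}}$ rewritten. Your introduction of $\{k\}=q^{k}+q^{-k}$ and the relation $[a]\{b\}=[a+b]+[a-b]$ makes the verification considerably more transparent than the paper's one-line ``by induction on $n$, one can easily prove the claim.''
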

\begin{proof}
By induction on $n$, one can easily prove the claim.
\end{proof}

We first consider some cases where the coefficient is expressed by a 
simple fraction of $q$-integers.
We first consider the reduced expressions without the product $E_0E_1$
in its algebraic representation.
\begin{prop}
\label{prop:Eji}
Let $i$ and $j$ be positive integers such that $1\le i\le j\le n$.
We have 
\begin{align}
\label{eq:Eji}
\mathrm{coef}^{n}(E_{j}E_{j-1}\ldots E_{i})
=
\begin{cases}
\displaystyle \genfrac{}{}{}{}{[n][n+1]}{[2n][2]}, & \text{ if } i=j=1, \\
\displaystyle \genfrac{}{}{}{}{[n][2(i-1)][n+1-j]}{[2n][i-1]}, & \text{ otherwise }. 
\end{cases}
\end{align}
\end{prop}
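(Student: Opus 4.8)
The word $E_{j}E_{j-1}\cdots E_{i}$ involves no generator $E_{0}$, hence its $(n+1)$-strand diagram $D$ carries no dots, and the diagrammatic recurrence (\ref{eq:Dev}) of Proposition~\ref{prop:Deven} applies to $\mathrm{coef}^{n}(D)$. The plan is to prove (\ref{eq:Eji}) by induction on $n$, reducing $\mathrm{coef}^{n}(E_{j}\cdots E_{i})$ to coefficients at size $n-1$ with that recurrence and then matching the result to the claimed closed form by a $q$-integer computation. Throughout, the degenerate value $\tfrac{[2(i-1)]}{[i-1]}$ at $i=1$ is read as $1$, exactly as in the definition of $Z(\mu)$.

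For the base case $j=n$ one has $D=g_{n,i}$; its only cup touches the right edge, so its unique inner-most cap is the cap at position $i$, and deleting it leaves the identity $\mathbf 1$. Thus (\ref{eq:Dev}) gives $\mathrm{coef}^{n}(E_{n}E_{n-1}\cdots E_{i})=\mathrm{coef}(g_{n,i})$, which is precisely the right-hand side of (\ref{eq:Eji}) specialised at $j=n$; in particular this covers $n=1$.

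Now let $j<n$. Then $D$ has exactly two inner-most caps: the cap at position $i$, and the rightmost vertical strand, which is an inner-most cap at position $n+1$. Removing the strand at position $n+1$ merely deletes the last strand and returns the $n$-strand diagram of the same word, contributing $\mathrm{coef}(g_{n,n+1})\,\mathrm{coef}^{n-1}(E_{j}\cdots E_{i})=\mathrm{coef}^{n-1}(E_{j}\cdots E_{i})$. The step I expect to be the main obstacle is to identify the diagram $D_{i}$ obtained by removing the cap at position $i$. I claim $D_{i}$ is the diagram of the \emph{increasing} word $E_{j}E_{j+1}\cdots E_{n-1}$ in $\mathrm{TL}^{D}_{n}$; this is verified through the factorisation $D=D_{i}\cdot g_{n,i}$ underlying (\ref{eq:Dev}), i.e. through
\begin{align*}
\bigl(E_{j}E_{j+1}\cdots E_{n-1}\bigr)\bigl(E_{n}E_{n-1}\cdots E_{i}\bigr)=E_{j}E_{j-1}\cdots E_{i},
\end{align*}
which collapses by repeated use of the braid relation $E_{k}E_{k+1}E_{k}=E_{k}$ (first $E_{n-1}E_{n}E_{n-1}\to E_{n-1}$, then $E_{n-2}E_{n-1}E_{n-2}\to E_{n-2}$, and so on), together with the fact that $D_{i}$ is the unique basis diagram with this property. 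Its coefficient in $Q_{n-1}$ is then read off from the transpose symmetry of Proposition~\ref{prop:barE}:
\begin{align*}
\mathrm{coef}^{n-1}(E_{j}E_{j+1}\cdots E_{n-1})=\mathrm{coef}^{n-1}(E_{n-1}E_{n-2}\cdots E_{j})=\mathrm{coef}^{n-1}(g_{n-1,j})=\mathrm{coef}(g_{n-1,j}),
\end{align*}
the last step being the base case at size $n-1$. Combining the two inner-most caps therefore yields, for $i\le j<n$,
\begin{align*}
\mathrm{coef}^{n}(E_{j}E_{j-1}\cdots E_{i})=\mathrm{coef}(g_{n,i})\,\mathrm{coef}(g_{n-1,j})+\mathrm{coef}^{n-1}(E_{j}E_{j-1}\cdots E_{i}).
\end{align*}

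To close the induction I would substitute the inductive hypothesis for $\mathrm{coef}^{n-1}(E_{j}\cdots E_{i})$ and the explicit values (\ref{eq:coefgh}) of $\mathrm{coef}(g_{n,i})$, $\mathrm{coef}(g_{n-1,j})$ into this recurrence and check that the candidate (\ref{eq:Eji}) solves it. For $2\le i\le j<n$ (and identically, under the convention above, for $i=1<j$) the verification reduces to the elementary identities $[2m]=(q^{m}+q^{-m})[m]$ and $[j]-[j-2]=q^{j-1}+q^{-(j-1)}$; for $i=j=1<n$ the recurrence reads $\mathrm{coef}^{n}(E_{1})=\tfrac{[n]}{[2n]}\cdot\tfrac{[n-1]}{[2(n-1)]}+\mathrm{coef}^{n-1}(E_{1})$, which the candidate $\tfrac{[n][n+1]}{[2n][2]}$ satisfies because $[2]+[2n]=(q^{n-1}+q^{-(n-1)})[n+1]$. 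Equivalently, one may unroll the recurrence down to $j=n$ and evaluate $\mathrm{coef}(g_{j,i})+\sum_{k=j+1}^{n}\mathrm{coef}(g_{k,i})\,\mathrm{coef}(g_{k-1,j})$ in closed form with Lemma~\ref{lemma:df}; or derive (\ref{eq:Eji}) directly from Theorem~\ref{thrm:Deven} by computing the Dyck path attached to $E_{j}\cdots E_{i}$ and summing the generating function $Z(\mu)$.
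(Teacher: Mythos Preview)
Your argument is correct and follows essentially the same route as the paper: both use the diagrammatic recurrence (\ref{eq:Dev}), identify the two inner-most caps of $E_{j}\cdots E_{i}$ (the cap at position $i$ and, for $j<n$, the rightmost strand at position $n+1$), and reduce to $\mathrm{coef}(g_{n,i})\cdot\mathrm{coef}^{n-1}(E_{n-1}\cdots E_{j})+\mathrm{coef}^{n-1}(E_{j}\cdots E_{i})$. The only cosmetic difference is that you identify $D_{i}$ as $E_{j}E_{j+1}\cdots E_{n-1}$ via the factorisation and then invoke Proposition~\ref{prop:barE}, whereas the paper writes the transposed word $E_{n-1}\cdots E_{j}$ directly; and you close the induction by a direct $q$-integer check while the paper unrolls the recurrence and applies Lemma~\ref{lemma:df} (which you also note as an alternative).
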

\begin{proof}
We prove the second cases in Eq. (\ref{eq:Eji}) by induction on $n$ since the first case can be proven in a similar way. 
By use of Eq. (\ref{eq:Dev}) in Proposition \ref{prop:Deven} and the induction hypothesis, 
we have 
\begin{align*}
\mathrm{coef}^{n}(E_{j}\cdots E_{i})
&=\genfrac{}{}{}{}{[n][2(i-1)]}{[2n][i-1]}\mathrm{coef}^{n-1}(E_{n-1}E_{n-2}\cdots E_{j})
+\mathrm{coef}^{n-1}(E_{j}\cdots E_{i}), \\
&=\genfrac{}{}{}{}{[n][n-1][2(i-1)][2(j-1)]}{[2n][2(n-1)][i-1][j-1]}
+\mathrm{coef}^{n-1}(E_{n-1}E_{n-2}\cdots E_{j}), \\
&=\left(\sum_{k=j+1}^{n}\genfrac{}{}{}{}{[k][k-1]}{[2k][2(k-1)]}\right)\genfrac{}{}{}{}{[2(i-1)][2(j-1)]}{[i-1][j-1]}
+\genfrac{}{}{}{}{[j][2(j-1)]}{[2j][j-1]}, \\
&=\genfrac{}{}{}{}{[n][2(i-1)][n+1-j]}{[2n][i-1]},
\end{align*}
where we have used Lemma \ref{lemma:df}.
\end{proof}

In what follows, we consider the coefficients of elements which contains
a product $E_0E_{1}$.

\begin{prop}
\label{prop:Ej01i}
Let $D=E_{j}E_{j-1}\cdots E_{1}E_{0}E_{2}\cdots E_{i}$ with $j\ge1$ and $i\ge 2$.
We have 
\begin{align}
\label{eq:Eji01}
\mathrm{coef}^{n}(D)=\genfrac{}{}{}{}{[n][n+1-i][n+1-j]}{[2n][n+1]}.
\end{align}
Further, we have 
\begin{align}
\label{eq:E01}
\mathrm{coef}^{n}(E_0E_1)=\genfrac{}{}{}{}{[n]^3}{[2n][n+1]}.
\end{align}
\end{prop}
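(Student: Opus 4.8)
The plan is to prove both formulas by induction on $n$ using the sign-free recurrence \eqref{eq:DinDAD} of Proposition \ref{prop:DinDAD}, which rewrites the coefficient of a diagram carrying a unique dot as a non-negative combination of type $A$ coefficients and of coefficients of strictly smaller diagrams of type $D$ with a unique dot. Before the induction I would cut down the generality of \eqref{eq:Eji01}: by the reversal symmetry of Proposition \ref{prop:barE} and the involution $\theta$ exchanging $E_0\leftrightarrow E_1$ of Proposition \ref{prop:theta}, applying $\theta$ and then reversing carries $D=E_jE_{j-1}\cdots E_1E_0E_2\cdots E_i$ to the element of the same shape with $i$ and $j$ interchanged; since the right-hand side of \eqref{eq:Eji01} is symmetric in $i$ and $j$, I may assume $i\le j$, and by Proposition \ref{prop:barE} I may work with the reversed word $\overline D=E_iE_{i-1}\cdots E_2E_0E_1E_2\cdots E_j$.

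Next I would read off the diagram of $\overline D$: it has a dotted strand joining the two left-most end points, a top cup at positions $\{i,i+1\}$, a single bottom cap at positions $\{j,j+1\}$, and the remaining strands running straight except for a shift by two in the band of positions between $i$ and $j$; in particular $\mathtt{For}(\overline D)=E_iE_{i+1}\cdots E_j$ as a type $A$ diagram, and the set of positions of inner-most caps is $I(\overline D)=\{j\}$ when $j=n$ and $\{j,\,n+1\}$ when $j<n$. For the inductive step I would substitute $I(\overline D)$ into \eqref{eq:DinDAD}. Deleting the inner-most cap at position $j$ (folding the right-most strand around) produces a diagram whose underlying type $A$ diagram is the transpose of $g^{A}_{n-1,i}$, so its type $A$ coefficient equals $[i]/[n]$ by Morrison's value of the coefficient of $g^{A}_{n-1,i}$ in $P_{n-1}$ \cite{Mor17} together with the transpose-invariance of $P_{n-1}$ (the type $A$ case of Proposition \ref{prop:barE}), while the accompanying odd diagram $\overline D^{\mathrm{odd}}_j$, once the dot is placed as (P2b) prescribes, is exactly the member of the same family with parameters $(i,n-1)$, known by the induction hypothesis. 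When $j<n$, the cap at position $n+1$ contributes nothing through $h_{n,n+1}$ (not defined, hence coefficient $0$) and, through its $g$-part with $\mathrm{coef}(g_{n,n+1})=1$, reproduces $\overline D$ one rank lower, again covered by induction. Inserting $\mathrm{coef}(h_{n,j})$ and $\mathrm{coef}(g_{n,j})$ from \eqref{eq:coefgh} and adding the (at most three) resulting terms, \eqref{eq:Eji01} then follows from the elementary $q$-integer identity $[i]+[m-i][m+1]=[m][m+1-i]$ together with $[2m]=[m](q^{m}+q^{-m})$; the induction is anchored at $i=j=n$, where $\overline D=h_{n,n}=E_{w_n}$ and $\mathrm{coef}(h_{n,n})=[n]/([2n][n+1])$ already matches \eqref{eq:Eji01}. (Alternatively one can unroll the recursion all the way to the bottom path, in which case Lemma \ref{lemma:df} performs the summation.)

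For \eqref{eq:E01} I would run the same recurrence on $E_0E_1$ itself, whose inner-most cap positions are $\{1,\,n+1\}$ for $n\ge2$. Deleting the dotted cap at position $1$ yields a diagram whose type $A$ shape is the transpose of $g^{A}_{n-1,1}$ (type $A$ coefficient $1/[n]$) and whose odd companion is $E_1E_0E_2\cdots E_{n-1}$, an instance of the family of \eqref{eq:Eji01} with parameters $(n-1,1)$ — known for $n\ge3$, with $n=2$ checked directly ($\mathrm{coef}^{1}(E_0E_1)=1/[2]^2$); deleting the right-most strand reproduces $E_0E_1$ one rank lower. Plugging in $\mathrm{coef}(h_{n,1})=[n]^2/([2n][n+1])$ and $\mathrm{coef}(g_{n,1})=[n]/[2n]$ and simplifying, the three contributions collapse, via the identity above in the form $[m]^2=1+[m-1][m+1]$, to $[n]^3/([2n][n+1])$.

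I expect the only real obstacle to be the graphical bookkeeping inside the recurrence: pinning down exactly which inner-most caps occur in $\overline D$ and in $E_0E_1$, and verifying that after the fold and the relocation of the dot forced by (P2b) the smaller diagrams obtained by deleting these caps genuinely lie in the families whose coefficients are already available. This amounts to a short but slightly delicate case analysis at the extreme positions ($i=j=n$, $j=n>i$, and position $n+1$); once it is settled, the remaining $q$-integer manipulations are routine.
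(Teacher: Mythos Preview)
Your proof is correct and takes a genuinely different route from the paper's. The paper argues directly from Sentinelli's recurrence \eqref{eq:recQ1} in Proposition~\ref{prop:recQ1}: it decomposes $D$ in all ways as $D' E_n D''$ and $D' E_{w_n} D''$ with $D',D''$ appearing in $Q_{n-1}$, reads off each contribution via Proposition~\ref{prop:Eji} and the induction hypothesis, and then sums the resulting terms (including the $-[2]$ factors coming from $h\cdot h$ collisions). You instead invoke the sign-free diagrammatic recurrence \eqref{eq:DinDAD} of Proposition~\ref{prop:DinDAD}: after the symmetry reduction to $\overline D$ you identify the inner-most caps, recognise that deleting the cap at position $j$ always produces a diagram whose underlying type $A$ shape is $\overline{g^{A}_{n-1,i}}$ (so its type $A$ coefficient is $[i]/[n]$ by Morrison) and whose odd companion lies again in the same family at rank $n-1$, while the right-most strand simply drops the rank. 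The advantage of your route is that only two or three positive terms survive and the $q$-identity $[i]+[m-i][m+1]=[m][m+1-i]$ closes the induction cleanly; the paper's route avoids any appeal to the type $A$ projection but pays for it with a longer list of products and cancellations. Both arguments rest on the same inductive skeleton and yield the same formula. One small remark: the reversal of Proposition~\ref{prop:barE} alone already swaps $i$ and $j$ in this family (since $E_0E_1=E_1E_0$), so the appeal to $\theta$ is not needed there; and the phrase ``anchored at $i=j=n$'' would be better stated as the base of the induction on $n$ (e.g.\ $n=2$), with $i=j=n$ being the case where the recurrence terminates because $D_n=\mathbf 1$.
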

\begin{proof}
We prove Eq. (\ref{eq:Eji01}) by induction on $n$. For $n=1,2$, one can show that Eq. (\ref{eq:Eji01})
holds by a simple calculation.

We make use of the recurrence relation (\ref{eq:recQ1}) in Proposition \ref{prop:recQ1}.
We calculate the contributions to $\mathrm{coef}^{n}(D)$ from the three terms in
the right hand side of Eq. (\ref{eq:recQ1}).
From the first term, we have $\mathrm{coef}^{n-1}(D)$.
Therefore, by induction hypothesis, we have 
\begin{align}
\label{eq:cont1}
\mathrm{coef}^{n-1}(D)=\genfrac{}{}{}{}{[n-1][n-j][n-i]}{[2n-2][n]}.
\end{align}

From the second term, we have three cases:
\begin{enumerate}
\item $D=(E_{j}\cdots E_1E_{0}E_2\cdots E_{n-1})\cdot E_{n}\cdot (E_{n-1}\cdots E_{i})$,
\item $D=(E_{j}\cdots E_1E_{0}E_2\cdots E_{n-1})\cdot E_{n}\cdot (E_{n-1}\cdots E_{1}E_{0}E_2\cdots E_{i})$
\item $D=(E_{j}E_{j+1}\cdots E_{n-1})\cdot E_{n}\cdot (E_{n-1}\cdots E_1E_0E_2\cdots E_{i})$,
\end{enumerate}
In the case of (2), we have an extra factor $-[2]$. 
By Eq. (\ref{eq:Eji}), the three cases (1) to (4) contribute to $Q_{n}$ as 
\begin{align}
\label{eq:cont2}
\begin{aligned}
\genfrac{}{}{}{}{[n][2n-2]}{[2n][n-1]}
\left\{\genfrac{}{}{}{}{[n-1][n-j]}{[2n-2][n]}\left(\genfrac{}{}{}{}{[n-1][2i-2]}{[2n-2][i-1]}
-[2]\genfrac{}{}{}{}{[n-1][n-i]}{[2n-2][n]}\right)\right. \\
\left.+\genfrac{}{}{}{}{[n-1][n-i]}{[2n-2][n]}\genfrac{}{}{}{}{[n-1][2j-2]}{[2n-2][j-1]}
\right\}.
\end{aligned}
\end{align}

Let $D_{j,i}:=E_{j}E_{j-1}\cdots E_{1} E_{0} E_{2}\cdots E_{i}$.
From the third term in Eq. (\ref{eq:recQ1}), 
we have $D=D'E_{w_n}D''$ where 
$D$ is either $E_{j}E_{j+1}\cdots E_{n-1}$ or $D_{j,n-1}$,
and $D''$ is either $E_{n-1}E_{n-2}\cdots E_{i}$ or $D_{n-1,i}$.
Note that if we choose $D_{j,n-1}$ or $D_{n-1,i}$, then 
$D'E_{w_n}D''$ has a factor $-[2]$.
Therefore, the contribution to $Q_{n}$ is given by
\begin{align}
\label{eq:cont3}
\genfrac{}{}{}{}{[n]}{[2n][n+1]}
\left(\genfrac{}{}{}{}{[n-1][2j-2]}{[2n-2][j-1]}-[2]\genfrac{}{}{}{}{[n-1][n-j]}{[2n-2][n]}\right)
\left(\genfrac{}{}{}{}{[n-1][2i-2]}{[2n-2][i-1]}-[2]\genfrac{}{}{}{}{[n-1][n-i]}{[2n-2][n]}\right).
\end{align}

By taking the sum of Eqs. (\ref{eq:cont1}), (\ref{eq:cont2}) and (\ref{eq:cont3}),
we obtain Eq. (\ref{eq:Eji01}).
Equation (\ref{eq:E01}) can be shown in a similar way.
\end{proof}

In some cases, the coefficient of an element in $Q_{n}$ is given by a simple fraction of 
$q$-integers as we have seen Propositions \ref{prop:Eji} and \ref{prop:Ej01i}.
However, in general, the coefficients are the sum of fractions of $q$-integers.
A typical example of such coefficients is $\mathrm{coef}^{n}(E_{0}E_1E_{3})$.

\begin{prop}
\label{prop:E013}
The coefficient $\mathrm{coef}^{n}(E_{0}E_1E_3)$ is given by
\begin{align}
\mathrm{coef}^{n}(E_{0}E_1E_3)
=\genfrac{}{}{}{}{[n-1]^3[n-2][2]^2}{[2n-2][n+1][n]}+2\genfrac{}{}{}{}{[n][n-1]^3[n-2][2]}{[2n][2n-2][n+1]}.
\end{align}
\end{prop}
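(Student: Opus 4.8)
The plan is to prove the formula by induction on $n$. The base case is $n=3$, which is exactly the computation of Example~\ref{ex:013}, giving $\mathrm{coef}^{3}(E_{0}E_{1}E_{3})=\frac{[2]^{3}}{[6][4]}([2]^{2}+1)$; a direct check of $q$-integers shows this equals the claimed expression at $n=3$. For the inductive step I would apply the diagrammatic recurrence of Proposition~\ref{prop:DinDAD} (equivalently the three-term recurrence \eqref{eq:recQ1}, following the template of the proof of Proposition~\ref{prop:Ej01i}) to the diagram $D$ of $E_{0}E_{1}E_{3}$ in $\mathrm{TL}^{D}_{n+1}$. This diagram has top cups $\{1,2\}$ and $\{3,4\}$, bottom caps $\{1,2\}$ (carrying the unique dot) and $\{3,4\}$, and $n-3$ vertical strands at positions $5,\dots,n+1$; hence its inner-most caps are at positions $1$, $3$, and $n+1$, so $I(D)=\{1,3,n+1\}$.

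Removing the inner-most cap at position $n+1$ returns the diagram of $E_{0}E_{1}E_{3}$ in $\mathrm{TL}^{D}_{n}$, which the induction hypothesis controls; removing the cap at position $1$ and removing the cap at position $3$ each produce the \emph{same} reduced diagram, namely $E_{0}E_{1}E_{3}E_{4}\cdots E_{n-1}\in\mathrm{TL}^{D}_{n}$ (together with its type~$A$ forgetful image $E_{1}E_{3}E_{4}\cdots E_{n-1}\in\mathrm{TL}^{A}_{n}$). Substituting the coefficients $\mathrm{coef}(g_{n,1}),\mathrm{coef}(g_{n,3}),\mathrm{coef}(h_{n,1}),\mathrm{coef}(h_{n,3})$ from \eqref{eq:coefgh} and using the identities $[n]+[n-2]=[2][n-1]$ and $2+[4]/[2]=[2]^{2}$, Proposition~\ref{prop:DinDAD} collapses to
\begin{align*}
\mathrm{coef}^{n}(E_{0}E_{1}E_{3})&=\mathrm{coef}^{n-1}(E_{0}E_{1}E_{3})
+\frac{[n][2][n-1]}{[2n][n+1]}\,\mathrm{coef}^{A}_{n-1}(E_{1}E_{3}E_{4}\cdots E_{n-1})\\
&\quad+\frac{[n][2]^{2}}{[2n]}\,\mathrm{coef}^{n-1}(E_{0}E_{1}E_{3}E_{4}\cdots E_{n-1}).
\end{align*}

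So two auxiliary closed forms must be supplied first. The type~$A$ coefficient $\mathrm{coef}^{A}_{n-1}(E_{1}E_{3}E_{4}\cdots E_{n-1})$ I would obtain from Morrison's recurrence \eqref{eq:coefDA} (or from the Dyck-tiling computation of \cite{Shi24}). The type~$D$ coefficient $\mathrm{coef}^{n-1}(E_{0}E_{1}E_{3}E_{4}\cdots E_{n-1})$ I would establish by a parallel induction: the diagram of $E_{0}E_{1}E_{3}E_{4}\cdots E_{m}$ has inner-most caps only at positions $1$ and $m$, removing the one at $m$ simply deletes the last letter (giving $E_{0}E_{1}E_{3}E_{4}\cdots E_{m-1}$), and removing the one at position $1$ again reproduces a diagram of the same shape, so Proposition~\ref{prop:DinDAD} yields a first-order recurrence in $m$ that is solved explicitly with Lemma~\ref{lemma:df}. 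Feeding both inputs into the displayed recurrence and summing the resulting telescoping sums of $q$-integers (once more via Lemma~\ref{lemma:df}) produces the two-term formula in the statement. The main obstacle is precisely that these two families --- $E_{1}E_{3}E_{4}\cdots E_{m}$ in type~$A$ and $E_{0}E_{1}E_{3}E_{4}\cdots E_{m}$ in type~$D$ --- are \emph{not} instances of Propositions~\ref{prop:Eji} or~\ref{prop:Ej01i} (the indices skip from $E_{1}$ to $E_{3}$), so they must be handled on their own before the main computation can close; throughout, one must track carefully the sign $-[2]$ produced whenever two pieces containing $E_{0}E_{1}$ are multiplied, and the factor $2$ coming from $\mathrm{coef}(h_{n,0})=\mathrm{coef}(g_{n,1})$ together with the two diagrams $E_{1}E_{2}E_{0}$ and $E_{0}E_{2}E_{1}$.
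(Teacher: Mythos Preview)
Your main recurrence via Proposition~\ref{prop:DinDAD} is correct and is genuinely different from the paper's route. The paper works directly with the algebraic recurrence of Proposition~\ref{prop:Q1}: it enumerates all factorisations $E_{0}E_{1}E_{3}=H_{1}H_{2}$ with $H_{2}\in\{g_{n,i},h_{n,j}\}$, which forces the six auxiliary Lemmas~\ref{lemma:E120}--\ref{lemma:120gn1}. Your use of the minus-sign-free recurrence~\eqref{eq:DinDAD} collapses all of that bookkeeping to the two inputs $\mathrm{coef}^{A}_{n-1}(E_{1}E_{3}\cdots E_{n-1})$ and $\mathrm{coef}^{n-1}(E_{0}E_{1}E_{3}\cdots E_{n-1})$, and the identification $D_{1}=D_{3}$, together with the simplifications $[n]+[n-2]=[2][n-1]$ and $2+[4]/[2]=[2]^{2}$, is exactly right. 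This is a cleaner organisation than the paper's.

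There is, however, a concrete error in your plan for the second auxiliary input. You assert that for $E_{0}E_{1}E_{3}E_{4}\cdots E_{m}$ removing the inner-most cap at position $1$ ``again reproduces a diagram of the same shape''. It does not. The diagram of $E_{0}E_{1}E_{3}\cdots E_{m}$ has bottom caps $\{1,2\}$ and $\{m,m{+}1\}$ and a chain of through-strands from bottom $3,\dots,m{-}1$ to top $5,\dots,m{+}1$; deleting the cap $\{1,2\}$ slides that chain down and produces \emph{nested} bottom caps $\{m{-}3,m\}\supset\{m{-}2,m{-}1\}$ (already visible at $m=4$, where $D_{1}^{\mathrm{odd}}=E_{0}E_{1}E_{3}E_{2}$, not $E_{0}E_{1}E_{3}$). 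So Proposition~\ref{prop:DinDAD} applied to $E_{0}E_{1}E_{3}\cdots E_{m}$ does \emph{not} yield a first-order recurrence inside that family; it introduces a new shape whose coefficient you have no handle on. The fix is to compute $\mathrm{coef}^{m}(E_{0}E_{1}E_{3}\cdots E_{m})$ the way the paper proves Lemma~\ref{lemma:En310}: apply Proposition~\ref{prop:Q1} directly to $E_{m}E_{m-1}\cdots E_{3}E_{1}E_{0}$ (note there is no $H_{2}=1$ contribution, since $E_{m}\notin\mathrm{TL}^{D}_{m}$), which reduces to the already known coefficients of $E_{0},E_{1},E_{0}E_{1},E_{0}E_{2}E_{1},E_{1}E_{2}E_{0}$ in $Q_{m-1}$ from Proposition~\ref{prop:Eji}, Proposition~\ref{prop:Ej01i}, and Lemma~\ref{lemma:E120}. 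With that closed form in hand (and the straightforward type~$A$ computation for $E_{1}E_{3}\cdots E_{m}$), your displayed recurrence sums to the claimed expression.
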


Before proceeding to the proof of Proposition \ref{prop:E013}, we will 
calculate the necessary coefficients.
\begin{lemma}
\label{lemma:E120}
We have 
\begin{align*}
\mathrm{coef}^{n}(E_1E_2E_0)=\mathrm{coef}^{n}(E_0E_2E_1)=\genfrac{}{}{}{}{[n][n-1]}{[2n][2]}.
\end{align*}
\end{lemma}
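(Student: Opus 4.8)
The plan is to reduce to a single coefficient using the symmetry of Proposition \ref{prop:theta}, and then to evaluate it by induction on $n$ via the diagrammatic recurrence (\ref{eq:Dev}).

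The involution $\theta$ of Proposition \ref{prop:theta} sends $E_1E_2E_0$ to $E_0E_2E_1$, so $\mathrm{coef}^{n}(E_1E_2E_0)=\mathrm{coef}^{n}(E_0E_2E_1)$; write $f(n)$ for this common value. It suffices to show $f(n)=\frac{[n][n-1]}{[2n][2]}$, and I would argue by induction on $n$. The base case is $n=2$: the explicit expression of $Q_2$ gives $f(2)=\frac{1}{[4]}=\frac{[2][1]}{[4][2]}$.

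For $n\ge3$, first identify the $(n+1)$-strand diagram $D$ of $E_1E_2E_0$. A direct computation (or the observation that $D$ has the connectivity of $E_1$, while the two dots of $E_0$ are carried, under successive left multiplication by $E_2$ and then $E_1$, onto the cap joining the first two bottom points and onto the vertical strand in position $3$) shows that $D$ has an undotted cup on the first two top points, a dotted cap on the first two bottom points, a dotted vertical strand in position $3$, and undotted vertical strands in positions $4,\dots,n+1$. Hence $I^{\bullet}(D)=\{1\}$ and $I^{\circ}(D)=\{n+1\}$ — the dotted strand in position $3$ is, for $n\ge3$, a non-inner-most vertical strand — so $I(D)=\{1,n+1\}$ and Proposition \ref{prop:Deven} yields
\begin{align*}
f(n)=\mathrm{coef}(g_{n,1})\,\mathrm{coef}^{n-1}(D_{1})+\mathrm{coef}(g_{n,n+1})\,\mathrm{coef}^{n-1}(D_{n+1}),
\end{align*}
with $\mathrm{coef}(g_{n,1})=\frac{[n]}{[2n]}$ and $\mathrm{coef}(g_{n,n+1})=1$. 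The diagram $D_{n+1}$, obtained by deleting the right-most vertical strand, is the $n$-strand diagram of $E_1E_2E_0$, so $\mathrm{coef}^{n-1}(D_{n+1})=f(n-1)$; and $D_{1}$, obtained by removing the dotted inner-most cap in position $1$ and carrying out the dot-adjustment of Proposition \ref{prop:Deven}, is the undotted diagram of $E_1E_2\cdots E_{n-1}$, whose coefficient is $\frac{[n-1]}{[2(n-1)]}$ by Propositions \ref{prop:Eji} and \ref{prop:barE}. Thus $f(n)=\frac{[n][n-1]}{[2n][2(n-1)]}+f(n-1)$.

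Finally, substituting the induction hypothesis $f(n-1)=\frac{[n-1][n-2]}{[2(n-1)][2]}$, the desired equality $f(n)=\frac{[n][n-1]}{[2n][2]}$ reduces to the $q$-integer identity $[n][2n-2]-[2n][n-2]=[2][n]$, which follows at once from $[a][b]=\frac{1}{(q-q^{-1})^{2}}\bigl(q^{a+b}+q^{-a-b}-q^{a-b}-q^{b-a}\bigr)$ together with $q^{2}-q^{-2}=(q-q^{-1})[2]$ (or from Lemma \ref{lemma:df}). I expect the main obstacle to be the precise identification of $D_{1}$ — checking that removing the dotted cap in position $1$ together with the prescribed dot-adjustment produces exactly the undotted staircase diagram of $E_1E_2\cdots E_{n-1}$, with no residual dot left behind; the remaining steps are routine $q$-integer bookkeeping.
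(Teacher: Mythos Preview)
Your proof is correct and follows essentially the same line as the paper's. Both arguments derive the recurrence $f(n)=\dfrac{[n][n-1]}{[2n][2n-2]}+f(n-1)$: the paper does so algebraically via Proposition~\ref{prop:Q1}, factoring $E_0E_2E_1=(E_0E_2\cdots E_{n-1})\cdot g_{n,1}$, while you use the equivalent diagrammatic form (Proposition~\ref{prop:Deven}) by reading off $I(D)=\{1,n+1\}$. The paper then sums the recurrence via Lemma~\ref{lemma:df}, whereas you close the induction with a direct $q$-integer identity; these are interchangeable. Your identification of $D_1$ is exactly right (the added dot lands on the already-dotted strand at position~$3$ and the two dots cancel), and a cleaner citation for $\mathrm{coef}^{n-1}(E_1E_2\cdots E_{n-1})=\dfrac{[n-1]}{[2n-2]}$ is Eq.~(\ref{eq:coefgh}) together with Proposition~\ref{prop:barE}, since the $i=1$ case of Proposition~\ref{prop:Eji} is not stated explicitly there.
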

\begin{proof}
We make use of Eq. (\ref{eq:Q2}) in Proposition \ref{prop:Q1}.
Since the word $E_{0}E_{2}E_{1}$ is rewritten as 
\begin{align*}
E_0E_2E_1=E_0E_2E_3\cdots E_{n-1}\cdot E_{n}E_{n-1}\cdots E_{1},
\end{align*}
we have 
\begin{align*}
\mathrm{coef}^{n}(E_0E_2E_1)&=\mathrm{coef}^{n-1}(E_0E_2\cdots E_{n-1})\mathrm{coef}^{n}(g_{n,1})
+\mathrm{coef}^{n-1}(E_0E_2E_1), \\
&=\genfrac{}{}{}{}{[n][n-1]}{[2n][2n-2]}+\mathrm{coef}^{n-1}(E_0E_1E_2), \\
&=\sum_{k=2}^{n}\genfrac{}{}{}{}{[k][k-1]}{[2k][2k-2]}, \\
&=\genfrac{}{}{}{}{[n][n-1]}{[2n][2]},
\end{align*}
where we have used Lemma \ref{lemma:df}. By applying $\theta$ on $E_{0}E_{2}E_{1}$, and 
Proposition \ref{prop:theta}, we have the equality $\mathrm{coef}^{n}(E_0E_2E_1)=\mathrm{coef}^{n}(E_1E_2E_0)$,
which completes the proof.
\end{proof}

\begin{lemma}
\label{lemma:En310}
We have 
\begin{align}
\label{eq:En310}
\mathrm{coef}^{n}(E_{n}E_{n-1}\cdots E_{3}E_{1}E_{0})
=\genfrac{}{}{}{}{[n-1]^2[2]}{[2n][n+1]}+\genfrac{}{}{}{}{[n-1]^3[2]^2}{[2n][2n-2]}.
\end{align}
\end{lemma}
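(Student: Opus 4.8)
The plan is to argue by induction on $n$, turning the statement into the recursion of Proposition \ref{prop:Q1} (equivalently Proposition \ref{prop:recQ1}) combined with the coefficients already computed in Propositions \ref{prop:Eji} and \ref{prop:Ej01i} and in Lemma \ref{lemma:E120}. Write $F_n:=E_nE_{n-1}\cdots E_3E_1E_0$ and $c_n:=\mathrm{coef}^n(F_n)$; note $F_n=E_nF_{n-1}$, and since $E_0,E_1$ commute with $E_3,\dots ,E_n$ also $F_n=(E_nE_{n-1}\cdots E_3)(E_1E_0)=g_{n,3}\,(E_1E_0)$. For the base case $n=3$ I would invoke Proposition \ref{prop:barE}: $\overline{F_3}=E_0E_1E_3$, so $c_3=\mathrm{coef}^3(E_0E_1E_3)$, which is exactly the quantity evaluated in Example \ref{ex:013} and equals the right-hand side of \eqref{eq:En310} at $n=3$.

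For the inductive step I would pass to $c_n=\mathrm{coef}^n(\overline{F_n})$ with $\overline{F_n}=E_0E_1E_3E_4\cdots E_n$ and apply the recursion \eqref{eq:Q2}, peeling a factor $g_{n,i}$ or $h_{n,j}$ from the right. Every $g_{n,i}$ ends in the letter $E_i$ and every $h_{n,j}$ ends in $E_j$ (or $E_0$ when $j=0$), and multiplying on the left by a diagram $X$ can shorten a word but never change its trailing letter; since $\overline{F_n}$ ends in $E_n$, the only peelable factors are $g_{n,n}=E_n$ and $h_{n,n}=E_{w_n}$, whose coefficients are $\mathrm{coef}(g_{n,n})=A_n$ and $\mathrm{coef}(h_{n,n})=B_n$. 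The $g_{n,n}$-contribution is immediate: $\overline{F_n}=\overline{F_{n-1}}\,E_n$ forces $X=\overline{F_{n-1}}$, contributing $A_nc_{n-1}$. The $E_{w_n}$-contribution is $B_n\sum_X\alpha_X\,\mathrm{coef}^{n-1}(X)$, summed over the reduced diagrams $X$ of $Q_{n-1}$ with $XE_{w_n}=\alpha_X\overline{F_n}$; using that $E_{w_n}$ is a palindrome this is the same as finding $Y$ with $E_{w_n}Y=\alpha_YF_n$, which I would enumerate with the collapsing identities of Lemma \ref{lemma:Eongh} together with the braid and commutation relations of $\mathrm{TL}^D_{n+1}$. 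I expect only a short explicit list of such $X$, each of whose coefficients in $Q_{n-1}$ is supplied by Propositions \ref{prop:Eji}, \ref{prop:Ej01i} and Lemma \ref{lemma:E120}, with some of the scalars $\alpha_X$ carrying a factor $-[2]$ arising from $E_1^2=-[2]E_1$ or $E_0^2=-[2]E_0$, exactly as in Remark \ref{remark:Dodd}.

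Once $c_n=A_nc_{n-1}+B_n\sum_X\alpha_X\,\mathrm{coef}^{n-1}(X)$ is assembled, the final step is to substitute the explicit $q$-integer values of $A_n$, $B_n$ and of the auxiliary coefficients and to verify by induction that the total equals $\frac{[n-1]^3[2]^2}{[2n][n+1]}$... more precisely $\frac{[n-1]^2[2]}{[2n][n+1]}+\frac{[n-1]^3[2]^2}{[2n][2n-2]}$; the summation identity of Lemma \ref{lemma:df} is what I expect to make the telescoping close. The main obstacle is the middle step: exhaustively and correctly identifying the reduced diagrams $X$ that enter the $E_{w_n}$-contribution together with their scalars $\alpha_X$ — equivalently, controlling exactly which products $XE_{w_n}$ collapse to a multiple of $\overline{F_n}$ and with which sign — after which the remaining work is routine $q$-integer bookkeeping. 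As an alternative, more combinatorial route one could instead use Theorem \ref{thrm:DoddZ}: the Dyck path $\lambda$ attached to $\mathtt{For}(F_n)$ is $UDUDU^{n-2}D^{n-2}UD$, and one would enumerate directly its bi-colored vertical Hermite histories above $\lambda$ and below the top path, the two summands of \eqref{eq:En310} corresponding to the two possible positions of the unique green trajectory.
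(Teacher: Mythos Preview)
Your induction scheme via Proposition~\ref{prop:Q1} is the right idea, and is essentially what the paper does (applied to $F_n$ rather than $\overline{F_n}$). But the step where you restrict the peelable right factors to $g_{n,n}$ and $h_{n,n}$ is incorrect, and the proof collapses at that point.

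The claim ``multiplying on the left by a diagram $X$ can shorten a word but never change its trailing letter'' is false: the trailing letter of a reduced expression is \emph{not} an invariant of the basis element. Since $E_0$ and $E_1$ commute with $E_3,\ldots,E_n$, your word $\overline{F_n}=E_0E_1E_3\cdots E_n$ can equally well be written as $E_3\cdots E_nE_0E_1$ or $E_3\cdots E_nE_1E_0$, so it also ``ends in'' $E_1$ and in $E_0$. Diagrammatically, $\overline{F_n}$ has \emph{two} inner-most caps, at positions $1$ and $n$; the recursion \eqref{eq:Q2} therefore has contributions from $H_2\in\{g_{n,1},\,h_{n,0},\,h_{n,1}\}$ as well as from $g_{n,n}$ and $h_{n,n}$. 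Already at $n=3$ you can see the missing pieces concretely: for instance $(E_0E_1)\cdot g_{3,1}=(E_0E_1)E_3E_2E_1=E_3E_0(E_1E_2E_1)=E_3E_0E_1=\overline{F_3}$, and similarly $E_0\cdot h_{3,1}=\overline{F_3}$, $E_1\cdot h_{3,1}=\overline{F_3}$, $(E_0E_1)\cdot h_{3,1}=-[2]\,\overline{F_3}$, and so on. With only the $g_{n,n}$ and $h_{n,n}$ terms, the $q$-integer identity you hope to close by Lemma~\ref{lemma:df} simply will not balance.

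The fix is exactly what the paper does: list \emph{all} factorizations $H_1H_2$. Working with $F_n$ directly (no transpose) the inner-most caps sit at positions $1$ and $3$, so $H_2\in\{g_{n,1},\,h_{n,0},\,g_{n,3},\,h_{n,1},\,h_{n,3}\}$; for $H_2\in\{g_{n,1},h_{n,0},g_{n,3}\}$ one finds $H_1=E_0E_1$, while for $H_2\in\{h_{n,1},h_{n,3}\}$ the admissible $H_1$ range over $E_0,\,E_1,\,E_0E_1,\,E_0E_2E_1,\,E_1E_2E_0$ (the last with a $-[2]$ when $H_1=E_0E_1$). All of these coefficients are available from Propositions~\ref{prop:Eji}, \ref{prop:Ej01i} and Lemma~\ref{lemma:E120}, and summing them gives \eqref{eq:En310}. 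Your alternative route via Theorem~\ref{thrm:DoddZ} is viable too, but that theorem is the paper's main result, so it would be a heavier tool for this lemma.
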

\begin{proof}
We make use of Eq. (\ref{eq:Q2}).
The word $E_nE_{n-1}\cdots E_{3}E_1E_0$ can be expressed as products of two words
$H_{1}H_{2}$ where $H_1$ is in $Q_{n-1}$ and $H_{2}$ is either $g_{n,i}$ or $h_{n,j}$.
More precisely, we have 
if $H_2$ is either $h_{n,1}$ or $h_{n,3}$, then $H_{1}$ is either 
$E_0$, $E_1$, $E_1E_0$,$E_{1}E_2E_0$ or $E_{0}E_{2}E_{1}$.
Similarly, if $H_2$ is either $g_{n,1}$, $h_{n,0}$ or $g_{n,3}$, then 
$H_1$ is $E_0E_1$. 
By taking the sum of all contributions, we have Eq. (\ref{eq:En310}).
\end{proof}

\begin{lemma}
\label{lemma:gn31}
We have 
\begin{align*}
\mathrm{coef}^{n}(E_nE_{n-1}\cdots E_{3}E_1)=\mathrm{coef}^{n}(E_nE_{n-1}\cdots E_{3}E_0)
=\genfrac{}{}{}{}{[n]^2[n-1][3]}{[2n][2n-2][2]}.
\end{align*}
\end{lemma}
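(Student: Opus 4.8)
The plan is to read off $\mathrm{coef}^{n}(E_{n}E_{n-1}\cdots E_{3}E_{1})$ (for $n\ge 3$) from one single application of the diagrammatic recurrence (\ref{eq:Dev}) of Proposition \ref{prop:Deven}. This is legitimate because the word $E_{n}E_{n-1}\cdots E_{3}E_{1}$ contains no $E_{0}$, so its diagram $D\in\mathrm{TL}^{D}_{n+1}$ carries no dots, hence in particular an even number of them. No induction on $n$ is needed: (\ref{eq:Dev}) will reduce $D$ directly to the diagram of $E_{1}$, whose coefficient is already supplied by the $i=j=1$ case of Proposition \ref{prop:Eji}.

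First I would pin down the set $I(D)$ of positions of inner-most caps of $D$ and the smaller diagrams $D_{i}$, $i\in I(D)$. Since $E_{1}$ commutes with each of $E_{3},E_{4},\ldots,E_{n}$ and $E_{1}E_{2}E_{1}=E_{1}$, one has the two factorizations
\begin{align*}
E_{n}E_{n-1}\cdots E_{3}E_{1}&=E_{1}(E_{n}E_{n-1}\cdots E_{3})=E_{1}g_{n,3}, \\
E_{1}g_{n,1}&=(E_{n}E_{n-1}\cdots E_{3})(E_{1}E_{2}E_{1})=E_{n}E_{n-1}\cdots E_{3}E_{1},
\end{align*}
so that $D=E_{1}g_{n,1}=E_{1}g_{n,3}$ with $E_{1}\in\mathrm{TL}^{D}_{n}$. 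By the argument in the proof of Proposition \ref{prop:Deven}, each such factorization $D=D'g_{n,i}$ exhibits an inner-most cap of $D$ at position $i$ and identifies $D_{i}=D'$; thus $\{1,3\}\subseteq I(D)$ and $D_{1}=D_{3}=E_{1}$. To see that there are no further terms I would look at $D$ directly: its diagram has top cups $\{1,2\}$ and $\{n,n+1\}$, bottom caps $\{1,2\}$ and $\{3,4\}$, vertical strands joining the $k$-th bottom point to the $(k-2)$-th top point for $5\le k\le n+1$, and no dots, so $\{1,2\}$ and $\{3,4\}$ are its only inner-most caps and $I(D)=\{1,3\}$.

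Given this, Proposition \ref{prop:Deven} together with the coefficient values (\ref{eq:coefgh}) yields
\begin{align*}
\mathrm{coef}^{n}(E_{n}E_{n-1}\cdots E_{3}E_{1})
&=\bigl(\mathrm{coef}(g_{n,1})+\mathrm{coef}(g_{n,3})\bigr)\,\mathrm{coef}^{n-1}(E_{1}) \\
&=\left(\frac{[n]}{[2n]}+\frac{[n][4]}{[2n][2]}\right)\mathrm{coef}^{n-1}(E_{1}).
\end{align*}
I would then substitute $\mathrm{coef}^{n-1}(E_{1})=\frac{[n-1][n]}{[2n-2][2]}$ (Proposition \ref{prop:Eji} with $i=j=1$) and simplify the prefactor using the identity $[2]+[4]=[2][3]$, which rewrites it as $\frac{[n][3]}{[2n]}$; multiplying out gives $\frac{[n]^{2}[n-1][3]}{[2n][2n-2][2]}$, the claimed value. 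For the second equality I would invoke Proposition \ref{prop:theta} with the involution $\theta$ exchanging $E_{0}$ and $E_{1}$: as $E_{2}$ does not appear in $E_{n}E_{n-1}\cdots E_{3}E_{1}$ and $\theta$ fixes every $E_{k}$ with $k\ge 2$, we have $\theta(E_{n}E_{n-1}\cdots E_{3}E_{1})=E_{n}E_{n-1}\cdots E_{3}E_{0}$, so the two coefficients agree.

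The only step that needs genuine care is the claim that deleting the inner-most cap at position $1$ and deleting the inner-most cap at position $3$ both return the diagram of $E_{1}$; this is the diagrammatic bookkeeping I would justify either through the word-level factorizations $D=E_{1}g_{n,1}=E_{1}g_{n,3}$ above combined with the description of the $g_{n,i}$ recalled in Section \ref{sec:rrd}, or simply by drawing $D$ and carrying out the two cap removals. Everything else is a one-line manipulation of $q$-integers.
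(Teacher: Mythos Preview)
Your proof is correct and follows essentially the same route as the paper's: the paper expresses $E_{n}\cdots E_{3}E_{1}$ as $H_{1}H_{2}$ with $H_{1}=E_{1}$ and $H_{2}\in\{g_{n,1},g_{n,3}\}$, then applies Eq.~(\ref{eq:Q2}) and Proposition~\ref{prop:Eji} directly, which is exactly your computation phrased via the algebraic recurrence rather than its diagrammatic form Proposition~\ref{prop:Deven}. Your explicit appeal to Proposition~\ref{prop:theta} for the $E_{0}$ case is a detail the paper leaves implicit.
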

\begin{proof}
The word $E_{n}\cdots E_3E_1$ can be expressed as a product of two words $H_1H_{2}$
where $H_{1}=E_{1}$ and $H_{2}$ is either $g_{n,3}$ or $g_{n,1}$.
By Eq. (\ref{eq:Q2}), we have
\begin{align*}
\mathrm{coef}^{n}(E_n\cdots E_3E_1)=\mathrm{coef}^{n-1}(E_1)(\mathrm{coef}(g_{n,3})+\mathrm{coef}(g_{n,1}))
=\genfrac{}{}{}{}{[n]^2[n-1][3]}{[2n][2n-2][2]},
\end{align*}
where we have used Eq. (\ref{eq:Eji}).
\end{proof}

\begin{lemma}
\label{lemma:0gn1}
We have 
\begin{align}
\label{eq:E0n1}
\mathrm{coef}^{n}(E_0E_{n}E_{n-1}\cdots E_{1})=\mathrm{coef}^{n}(E_1E_{n}E_{n-1}\cdots E_{2}E_{0})
=\genfrac{}{}{}{}{[n]^2[n-1]}{[2n][2n-2][2]}.
\end{align}
\end{lemma}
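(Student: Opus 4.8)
The plan is to compute $\mathrm{coef}^{n}(E_0E_{n}E_{n-1}\cdots E_{1})$ by applying the diagrammatic recurrence (\ref{eq:Dev}) of Proposition \ref{prop:Deven}, and then to deduce the second equality from the involution $\theta$ via Proposition \ref{prop:theta}. Write $W:=E_0E_{n}E_{n-1}\cdots E_{2}E_{1}$. Pushing $E_0$ to the right past the commuting generators $E_{n},\dots,E_{3}$ rewrites $W$ as $E_{n}E_{n-1}\cdots E_{3}\,(E_0E_2E_1)$, in which $E_0$ and $E_1$ are separated by a single $E_2$ that cannot be removed; hence $W$ contains no product $E_0E_1$, its diagram carries an even number of dots, and Proposition \ref{prop:Deven} is applicable.

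First I would determine the $(n+1)$-strand diagram of $W=E_0\cdot g_{n,1}$ by composing the known diagram of $g_{n,1}$ (a cup on the two right-most top points, a cap on the two left-most bottom points, and $n-1$ through-strands) with $E_0$ stacked on top. The outcome has a dotted top cup on points $1,2$, a plain top cup on points $n,n+1$, a plain bottom cap on points $1,2$, a dotted bottom cap on points $3,4$, and through-strands joining top point $k$ to bottom point $k+2$ for $3\le k\le n-1$. Thus the inner-most caps of $W$ lie at positions $1$ (plain) and $3$ (dotted); by the remark following Proposition \ref{prop:Deven}, the dotted inner-most cap at position $3\neq1$ cannot be removed, so $I(W)=\{1\}$, and deleting the inner-most cap at position $1$ returns precisely the diagram of $E_0$ in $\mathrm{TL}^{D}_{n}$.

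Then (\ref{eq:Dev}) gives $\mathrm{coef}^{n}(W)=\mathrm{coef}(g_{n,1})\cdot\mathrm{coef}^{n-1}(E_0)$. Using $\mathrm{coef}(g_{n,1})=[n]/[2n]$ from (\ref{eq:coefgh}), Proposition \ref{prop:theta} to write $\mathrm{coef}^{n-1}(E_0)=\mathrm{coef}^{n-1}(E_1)$, and Proposition \ref{prop:Eji} in the case $i=j=1$ (with $n$ replaced by $n-1$) to evaluate $\mathrm{coef}^{n-1}(E_1)=[n-1][n]/([2n-2][2])$, I obtain
\[
\mathrm{coef}^{n}(W)=\genfrac{}{}{}{}{[n]}{[2n]}\cdot\genfrac{}{}{}{}{[n-1][n]}{[2n-2][2]}
=\genfrac{}{}{}{}{[n]^{2}[n-1]}{[2n][2n-2][2]},
\]
which is (\ref{eq:E0n1}). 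Finally, since $\theta$ swaps $E_0$ and $E_1$ and fixes $E_2,\dots,E_n$, we have $\theta(E_0E_{n}\cdots E_2E_1)=E_1E_{n}\cdots E_2E_0$, so Proposition \ref{prop:theta} yields $\mathrm{coef}^{n}(E_0E_{n}\cdots E_1)=\mathrm{coef}^{n}(E_1E_{n}\cdots E_2E_0)$, completing the proof.

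The step I expect to be the main obstacle is correctly identifying the diagram of $W$ and, in particular, spotting the dotted inner-most cap at position $3$ that must be discarded; overlooking it would spuriously contribute a term $\mathrm{coef}(g_{n,3})\,\mathrm{coef}^{n-1}(\cdot)$. If the diagram bookkeeping proves awkward, a purely algebraic fallback is available: one checks directly from (\ref{eq:Q2}) that $W=E_0\cdot g_{n,1}$ is the only way to split $W$ as a product $H_1H_2$ with $H_1$ free of $E_n$ and $H_2\in\{g_{n,i}\}\cup\{h_{n,j}\}$ — the candidates $h_{n,j}$ with $j\ge1$ are excluded because $W$ has an even number of dots, while the remaining $g_{n,i}$ and $h_{n,0}$ are excluded by comparing the bottom caps they force with those of $W$ — and the same computation goes through.
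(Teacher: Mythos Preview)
Your proof is correct and essentially the same as the paper's. The paper argues directly from the algebraic recurrence~(\ref{eq:Q2}) that $W=E_0\cdot g_{n,1}$ is the unique factorisation $H_1H_2$ with $H_2\in\{g_{n,i},h_{n,j}\}$, yielding $\mathrm{coef}^{n}(W)=\mathrm{coef}^{n-1}(E_0)\,\mathrm{coef}(g_{n,1})$; you reach the identical equality via the diagrammatic recurrence~(\ref{eq:Dev}), which is just the diagrammatic reformulation of~(\ref{eq:Q2}), and you even name the algebraic route as your fallback. The only cosmetic difference is that for the second equality the paper invokes Proposition~\ref{prop:barE} while you use Proposition~\ref{prop:theta}; both symmetries do the job.
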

\begin{proof}
The word $E_0E_{n}E_{n-1}\cdots E_{1}$ is uniquely written as a product of two words $H_1H_{2}$ where 
$H_{1}=E_0$ and $H_{2}=g_{n,1}$. By Eq. (\ref{eq:Q2}),
$\mathrm{coef}^{n}(E_{0}g_{n,1})=\mathrm{coef}^{n-1}(E_0)\mathrm{coef}^{n}(g_{n,1})$.
From Propositions \ref{prop:barE} and \ref{prop:Eji}, and the definition of $\mathrm{coef}^{n}(g_{n,1})$, 
we have Eq. (\ref{eq:E0n1}).
\end{proof}

\begin{lemma}
\label{lemma:021gn3}
We have 
\begin{align}
\label{eq:E021n3}
\mathrm{coef}^{n}(E_0E_2E_1g_{n,3})=\mathrm{coef}^{n}(E_1E_2E_0g_{n,3})
=\genfrac{}{}{}{}{[n][n-1][n-2][3]}{[2n][2n-2][2]}.
\end{align}
\end{lemma}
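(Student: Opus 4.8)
The plan is to read the value off a single application of the recurrence (\ref{eq:Q2}) of Proposition \ref{prop:Q1}, exactly in the spirit of the proofs of Lemmas \ref{lemma:gn31} and \ref{lemma:0gn1}, and to obtain the stated equality from the involution $\theta$ of Proposition \ref{prop:theta}. The equality is immediate: since $g_{n,3}=E_nE_{n-1}\cdots E_3$ involves neither $E_0$ nor $E_1$, the involution $\theta$ carries $E_0E_2E_1g_{n,3}$ to $E_1E_2E_0g_{n,3}$, and $\theta$ preserves the coefficient in $Q_n$ by Proposition \ref{prop:theta}.

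For the value, write $D:=E_0E_2E_1g_{n,3}$. First I would determine all ways of writing $D=H_1H_2$ with $H_1$ a word in $E_0,\ldots,E_{n-1}$ (hence a diagram occurring in $Q_{n-1}$) and $H_2\in\{g_{n,i}:1\le i\le n+1\}\cup\{h_{n,j}:0\le j\le n\}$, so that (\ref{eq:Q2}) applies. One such decomposition is $H_1=E_0E_2E_1$, $H_2=g_{n,3}$. Using that $E_1$ commutes with $E_3,\ldots,E_n$ together with $E_1E_2E_1=E_1$, one also gets
\begin{align*}
(E_0E_2E_1)g_{n,1}
&=E_0E_2E_1E_nE_{n-1}\cdots E_2E_1
=E_0E_2(E_nE_{n-1}\cdots E_3)E_1E_2E_1\\
&=E_0E_2(E_nE_{n-1}\cdots E_3)E_1
=(E_0E_2E_1)g_{n,3}=D,
\end{align*}
so that $H_1=E_0E_2E_1$, $H_2=g_{n,1}$ is a second decomposition. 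I would then check that these are the only contributions: $D$ contains an even number of dots (the reduced word $E_0E_2E_1$, and hence $D$, does not contain the product $E_0E_1$), so no $h_{n,j}$ with $j\ge 1$ occurs; diagrammatically, the diagram of $D$ has inner-most caps exactly at positions $1$ and $3$, and removing either of them returns the diagram of $E_0E_2E_1$ with no loop produced. Hence, by (\ref{eq:Dev}) (equivalently by (\ref{eq:Q2})),
\begin{align*}
\mathrm{coef}^{n}(D)=\mathrm{coef}^{n-1}(E_0E_2E_1)\bigl(\mathrm{coef}(g_{n,3})+\mathrm{coef}(g_{n,1})\bigr).
\end{align*}

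Finally I would substitute $\mathrm{coef}^{n-1}(E_0E_2E_1)=\dfrac{[n-1][n-2]}{[2n-2][2]}$ from Lemma \ref{lemma:E120}, together with $\mathrm{coef}(g_{n,3})=\dfrac{[n][4]}{[2n][2]}$ and $\mathrm{coef}(g_{n,1})=\dfrac{[n]}{[2n]}$ from (\ref{eq:coefgh}), and simplify using the $q$-integer identity $[2]+[4]=[2][3]$:
\begin{align*}
\mathrm{coef}^{n}(D)=\frac{[n-1][n-2]}{[2n-2][2]}\cdot\frac{[n]}{[2n]}\cdot\frac{[2]+[4]}{[2]}
=\frac{[n][n-1][n-2][3]}{[2n][2n-2][2]},
\end{align*}
which is the assertion; the case $E_1E_2E_0g_{n,3}$ follows from the equality established above, so one may also apply $\theta$ to $E_0E_2E_1$ throughout and repeat the computation verbatim.

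The step I expect to be the main obstacle is the bookkeeping in the third paragraph: confirming that $(E_0E_2E_1)g_{n,3}$ and $(E_0E_2E_1)g_{n,1}$ are exactly the two decompositions of $D$ entering (\ref{eq:Q2}), and that neither carries a spurious scalar $-[2]$. This is precisely where the even parity of the number of dots of $D$ (which rules out the $h_{n,j}$, $j\ge1$) and the location of the inner-most caps of its diagram are needed; the degenerate case $n=3$ (where $g_{3,3}=E_3$ and $g_{3,1}=E_3E_2E_1$) should be checked directly to make sure the argument still applies there.
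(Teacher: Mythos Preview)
Your proposal is correct and takes essentially the same approach as the paper's proof: note that $E_0E_2E_1g_{n,3}=E_0E_2E_1g_{n,1}$, apply the recurrence (\ref{eq:Q2}) with these two decompositions, and substitute $\mathrm{coef}^{n-1}(E_0E_2E_1)$ from Lemma \ref{lemma:E120}. Your write-up is in fact more detailed than the paper's, supplying the explicit algebraic verification of the identity $(E_0E_2E_1)g_{n,1}=(E_0E_2E_1)g_{n,3}$, the justification via Proposition \ref{prop:theta} for the first equality, and the final $q$-integer simplification $[2]+[4]=[2][3]$.
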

\begin{proof}
Note that the word $E_{0}E_2E_1g_{n,3}$ can be also expressed as $E_{0}E_2E_1g_{n,1}$.
Therefore, we have two expressions for $E_{0}E_{2}E_{1}g_{n,3}$.
By Eq. (\ref{eq:Q2}) and Lemma \ref{lemma:E120}, we have Eq. (\ref{eq:E021n3}).
\end{proof}

\begin{lemma}
\label{lemma:120gn1}
We have 
\begin{align*}
\mathrm{coef}^{n}(E_1E_2E_0 g_{n,1})=\mathrm{coef}^{n}(E_0E_2E_1h_{n,0})
=\genfrac{}{}{}{}{[n][n-1][n-2]}{[2n][2n-2][2]}.
\end{align*}
\end{lemma}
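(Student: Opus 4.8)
The plan is to deduce the second equality from the first via the involution $\theta$ of Proposition \ref{prop:theta}, and then to obtain the common value by a single step of the recurrence (\ref{eq:Q2}).

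First, since $\theta$ interchanges $E_0$ and $E_1$ and fixes $E_2,\ldots,E_n$, one has $\theta(h_{n,0})=\theta(E_nE_{n-1}\cdots E_2E_0)=E_nE_{n-1}\cdots E_2E_1=g_{n,1}$, hence $\theta(E_0E_2E_1\,h_{n,0})=E_1E_2E_0\,g_{n,1}$; Proposition \ref{prop:theta} then gives $\mathrm{coef}^{n}(E_1E_2E_0\,g_{n,1})=\mathrm{coef}^{n}(E_0E_2E_1\,h_{n,0})$, so it remains to compute $\mathrm{coef}^{n}(E_1E_2E_0\,g_{n,1})$.

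Write $D:=E_1E_2E_0\,g_{n,1}$. Commuting $E_0$ past $E_n,\ldots,E_3$ shows $D=E_1E_2(E_nE_{n-1}\cdots E_3)E_0E_2E_1$, so $D$ contains no factor $E_0E_1$; its diagram therefore has an even number of dots, and Proposition \ref{prop:Deven} applies. The only factorization $D=H_1H_2$ with $H_1\in\mathrm{TL}^{D}_{n}$ and $H_2\in\{g_{n,i}\}\cup\{h_{n,j}\}$ is $H_1=E_1E_2E_0$, $H_2=g_{n,1}$: a factor $h_{n,j}$ with $j\ge1$ is excluded since $D$ has no $E_0E_1$ (it would force a $-[2]$ and an odd number of dots), $h_{n,0}$ is excluded since no reduced word for $D$ ends in $E_0$, and $g_{n,i}$ with $i\ge2$ is excluded since no reduced word for $D$ ends in $E_2$ (the final $E_1$ cannot be commuted past it); equivalently, the diagram of $D$ has an undotted inner-most cap only at position $1$ (inherited from $g_{n,1}$), its other inner-most cap being dotted at a position $>1$ and hence non-removable by Proposition \ref{prop:Deven}, and $n+1\notin I(D)$. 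Hence
\begin{align*}
\mathrm{coef}^{n}(E_1E_2E_0\,g_{n,1})=\mathrm{coef}^{n-1}(E_1E_2E_0)\cdot\mathrm{coef}(g_{n,1}),
\end{align*}
and substituting $\mathrm{coef}^{n-1}(E_1E_2E_0)=\dfrac{[n-1][n-2]}{[2n-2][2]}$ from Lemma \ref{lemma:E120} (with $n$ replaced by $n-1$) together with $\mathrm{coef}(g_{n,1})=\dfrac{[n]}{[2n]}$ from (\ref{eq:coefgh}) yields $\dfrac{[n][n-1][n-2]}{[2n][2n-2][2]}$. (As a check, at $n=3$ this reads $\dfrac{[3]}{[6][4]}$, in agreement with $\mathrm{coef}^{2}(E_1E_2E_0)=\dfrac{1}{[4]}$ read off from $Q_2$.)

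I expect the one delicate point to be the uniqueness of the factorization $D=H_1H_2$ — ruling out every other $g_{n,i}$ and $h_{n,j}$ — which is a short commutation/diagram check of exactly the type already carried out in the proof of Proposition \ref{prop:Deven} and in Lemmas \ref{lemma:gn31}--\ref{lemma:021gn3}; everything afterward is a one-line substitution.
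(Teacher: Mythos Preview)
Your proposal is correct and follows essentially the same route as the paper: the paper's proof simply asserts that the only factorization of $E_1E_2E_0\,g_{n,1}$ as $H_1H_2$ with $H_2\in\{g_{n,i},h_{n,j}\}$ is $H_1=E_1E_2E_0$, $H_2=g_{n,1}$, and then substitutes Lemma~\ref{lemma:E120} and $\mathrm{coef}(g_{n,1})$ from~(\ref{eq:coefgh}). Your write-up adds two things the paper omits --- the explicit derivation of the first equality via $\theta$ (Proposition~\ref{prop:theta}), and an argument for the uniqueness of the factorization --- though note that your phrase ``no reduced word for $D$ ends in $E_2$'' only literally rules out $g_{n,2}$; the diagram description you give immediately after (one undotted inner-most cap at position~$1$, the other inner-most cap dotted and hence non-removable) is what actually handles all $g_{n,i}$ with $i\ge2$ at once, in line with Proposition~\ref{prop:Deven}.
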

\begin{proof}
Since there is no other ways to describe the word $E_1E_2E_0g_{n,1}$ in terms of a product 
of words $H_1$ and $H_2$ such that $H_1$ is in $Q_{n-1}$ and $H_2$ is either $g_{n,i}$ or $h_{n,j}$,
we have 
\begin{align*}
\mathrm{coef}^{n}(E_1E_2E_0 g_{n,1})=\mathrm{coef}^{n-1}(E_1E_2E_0)\mathrm{coef}(g_{n,1})
=\genfrac{}{}{}{}{[n][n-1][n-2]}{[2n][2n-2][2]},
\end{align*}
where we have used Lemma \ref{lemma:E120} and Eq. (\ref{eq:coefgh}).
\end{proof}

We are ready to prove Proposition \ref{prop:E013}.

\begin{proof}[Proof of Proposition \ref{prop:E013}]
We rewrite $E_{0}E_{1}E_{3}$ as a product $H_1H_2$ of two words $H_1$ and $H_2$ such that 
$H_1$ is in $Q_{n-1}$ and $H_2$ is either $g_{n,i}$ or $h_{n,j}$.
Below, we list up all the possibilities of a pair of two elements $H_1$ and $H_{2}$.

Suppose that $H_{1}=E_0E_1\overline{g_{n-1,3}}$. Then, $H_2$ is either $g_{n,1}$, $h_{n,0}$ or $g_{n,3}$.

Similarly, if $H_{2}$ is either $h_{n,1}$ or $h_{n,3}$, then 
$H_1$ is either $E_{0}\overline{g_{n-1,3}}$, $E_{1}\overline{g_{n-1,3}}$, $E_{0}E_1\overline{g_{n-1,3}}$, $\overline{g_{n-1,1}}E_0$, 
$\overline{h_{n-1,0}}E_{1}$, $\overline{g_{n-1,3}}E_1E_2E_0$, $\overline{g_{n-1,3}}E_{0}E_2E_{1}$, 
$\overline{g_{n-1,1}}E_0E_2E_1$, or $\overline{h_{n-1,0}}E_1E_2E_0$.

Finally, if $H_2$ is the identity, then $H_{1}=E_{0}E_{1}E_{3}$.

Note that if $H_1=E_{0}E_{1}\overline{g_{n-1,3}}$ and $H_{2}=h_{n,1}$ or $h_{n,3}$, then we have 
$H_1H_2=-[2]E_{0}E_1E_{3}$.

By Eq. (\ref{eq:Q2}), we have 
\begin{align*}
\mathrm{coef}^{n}(E_0E_1E_3)
&=\left(\genfrac{}{}{}{}{[n-2]^2[2]}{[2n-2][n]}+\genfrac{}{}{}{}{[n-2]^3[2]^2}{[2n-2][2n-4]}\right) \\
&\qquad\times\left(2\genfrac{}{}{}{}{[n]}{[2n]}+\genfrac{}{}{}{}{[n][4]}{[2n][2]}-[2]\genfrac{}{}{}{}{[n]^2}{[2n][n+1]}
-[2]\genfrac{}{}{}{}{[n][n-2]}{[2n][n+1]}\right) \\
&\quad+2\left(\genfrac{}{}{}{}{[n]^2}{[2n][n+1]}+\genfrac{}{}{}{}{[n][n-2]}{[2n][n+1]}\right) \\
&\qquad\times
\left(\genfrac{}{}{}{}{[n-1]^2[n-2][3]}{[2n-2][2n-4][2]}+\genfrac{}{}{}{}{[n-1]^2[n-2]}{[2n-2][2n-4][2]}\right.\\
&\qquad\qquad+\left.\genfrac{}{}{}{}{[n-1][n-2][n-3][3]}{[2n-2][2n-4][2]}+\genfrac{}{}{}{}{[n-1][n-2][n-3]}{[2n-2][2n-4][2]}
\right) \\
&\quad+\mathrm{coef}^{n-1}(E_0E_1E_3), \\
&=\sum_{k=2}^{n}\left(
\genfrac{}{}{}{}{[k-2]^2[2]^3}{[2k-2][k][k+1]}+\genfrac{}{}{}{}{[k-2]^3[2]^4}{[2k-2][2k-4][k+1]}
+2\genfrac{}{}{}{}{[k][k-1]^2[k-2]^2[2]^3}{[2k][2k-2][2k-4][k+1]}
\right), \\
&=\genfrac{}{}{}{}{[n-1]^3[n-2][2]^2}{[2n-2][n+1][n]}+2\genfrac{}{}{}{}{[n][n-1]^3[n-2][2]}{[2n][2n-2][n+1]},
\end{align*}
where we have used Lemmas \ref{lemma:E120}, \ref{lemma:En310}, \ref{lemma:gn31}, \ref{lemma:0gn1},
\ref{lemma:021gn3}, \ref{lemma:120gn1} and Proposition \ref{prop:barE}. 
\end{proof}

\appendix
\section{The Jones--Wenzl projection \texorpdfstring{$Q_{3}$}{Q3}}
\label{app:A}
The Jones--Wenzl projection $Q_{3}$ for $\mathrm{TL}^{D}_{4}$ is 
given as follows:
\begin{align*}
Q_{3}
&=1+\genfrac{}{}{}{}{[3][4]}{[6][2]}(E_0+E_1+E_3)+\genfrac{}{}{}{}{[3][2]^2}{[6]}E_{2}
+\genfrac{}{}{}{}{[3]^3}{[6][4]}(E_{1}E_0+E_3E_0+E_3E_1) \\
&\quad+\genfrac{}{}{}{}{[3][2]}{[6]}(E_2E_0+E_2E_1+E_0E_2+E_1E_2+E_3E_2+E_2E_3) \\
&\quad+\genfrac{}{}{}{}{[3]^2[2]}{[6][4]}(E_2E_1E_0+E_2E_3E_0+E_2E_3E_1+E_1E_0E_2+E_3E_0E_2+E_3E_1E_2) \\
&\quad+\genfrac{}{}{}{}{[3]}{[6]}(E_1E_2E_0+E_3E_2E_0+E_0E_2E_1+E_3E_2E_1+E_0E_2E_3+E_1E_2E_3)\\
&\quad+\genfrac{}{}{}{}{[2]^3([2]^2+1)}{[6][4]}E_3E_1E_0\\
&\quad+\genfrac{}{}{}{}{[3]^2}{[6][4]}(E_3E_2E_1E_0+E_3E_1E_2E_0+E_1E_2E_3E_0+E_0E_3E_2E_1+E_0E_2E_3E_1+E_1E_0E_2E_3)\\
&\quad+\genfrac{}{}{}{}{[3][2]^2}{[6][4]}(E_2E_1E_0E_2+E_2E_3E_0E_2+E_2E_3E_1E_2) \\
&\quad+\genfrac{}{}{}{}{[2]^2([2]^2+1)}{[6][4]}(E_2E_3E_1E_0+E_3E_1E_0E_2)\\
&\quad+\genfrac{}{}{}{}{[3][2]}{[6][4]}(E_2E_3E_1E_2E_0+E_2E_0E_3E_2E_1+E_3E_2E_1E_0E_2+E_1E_2E_3E_0E_2\\
&\qquad+E_0E_2E_3E_1E_2+E_2E_1E_0E_2E_3) \\
&\quad+\genfrac{}{}{}{}{[2]([2]^2+1)}{[6][4]}E_2E_3E_1E_0E_2 \\
&\quad+\genfrac{}{}{}{}{[3]}{[6][4]}(E_0E_2E_3E_1E_2E_0+E_1E_2E_3E_0E_2E_1+E_3E_2E_1E_0E_2E_3).	
\end{align*}

\bibliographystyle{amsplainhyper} 
\bibliography{biblio}

\end{document}